\newtheorem{theorem}{Theorem}[section]
\newtheorem{lemma}[theorem]{Lemma}
\newtheorem{corollary}[theorem]{Corollary}
\newtheorem{example}[theorem]{Example}
\newtheorem{proposition}[theorem]{Proposition}
\newtheorem{remark}[theorem]{Remark}
\title[The polycyclic inverse monoids]{The polycyclic inverse monoids\\ and\\ the Thompson groups\\ revisited}
\author{Mark V. Lawson}
\address{Mark V. Lawson, Department of Mathematics
and the
Maxwell Institute for Mathematical Sciences, 
Heriot-Watt University,
Riccarton,
Edinburgh EH14 4AS, 
UNITED KINGDOM}
\email{m.v.lawson@hw.ac.uk}
\begin{document} 

\begin{abstract} 
We revisit our construction of the Thompson groups from the polycyclic inverse monoids in the light of new research.
Specifically, we prove that the Thompson group $G_{n,1}$ is the group of units of a Boolean inverse monoid $C_{n}$
called the Cuntz inverse monoid. 
This inverse monoid is proved to be the tight completion of the polycyclic inverse monoid $P_{n}$.
The \'etale topological groupoid associated with $C_{n}$ under non-commutative Stone duality is the usual groupoid
associated with the corresponding Cuntz $C^{\ast}$-algebra. 
We then show that the group $G_{n,1}$ is also the group of automorphisms of a specific $n$-ary Cantor algebra:
this $n$-ary Cantor algebra is constructed first as the monoid of total maps of a restriction semigroup \`a la Statman and then in terms of labelled trees \`a la Higman. 
\end{abstract}
\subjclass[2000]{20M18, 18B40, 20F29}
\keywords{Thompson groups, free monoids, polycyclic inverse monoids, \'etale groupoids, Cantor algebras}

\maketitle

\section{Introduction}

We denote the polycyclic inverse monoid on $n$ generators by $P_{n}$ where $n$ is a finite natural number greater than or equal to 2.
The class of  polycyclic inverse monoids is one of the first interesting classes of inverse semigroups that arise.
They were introduced in \cite{NP} and you can find an elementary account of their theory in my book \cite[Section~9.3]{Lawson1998}.\footnote{The reader is cautioned that there I work with suffixes rather than prefixes.}
They are the syntactic monoids of the bracketing languages which are the prototypes of all
context-free languages and they arise as the monoid of actions of the pushdown stack of a pushdown automaton;
see Perrot's paper \cite{Perrot1971} and his thesis \cite{Perrot1972}.
It is worth noting that Perrot was led to introduce self-similar group actions via his study of the polycyclic inverse monoids \cite{Lawson2008}.
Motivated by the calculations in \cite{BJ}, special kinds of representations of the polycyclic monoids, called {\em strong representations}, 
are studied in \cite{Lawson2009, JL2012,HW2018}.
This leads to the introduction of what we call the {\em gauge inverse submonoid} of a polycyclic monoid.
Both the polycyclic inverse monoids and the gauge inverse monoids are used tacitly  in Cuntz's paper \cite{Cuntz}
and it is Cuntz's work that is a distant ancestor of our own.
Special inverse submonoids of the polycyclic inverse monoids are described in greater generality in \cite{MS}.
See also \cite{Lawson2007b} for further references.

Suffice it to say: the polycyclic inverse monoids have a proven track record.
The goal of this paper is to return to the work I carried out in \cite{Lawson2007b} but apply to it ideas developed in \cite{LV2020a, AAM}.
Specifically, we shall show how to construct the classical Thompson group $G_{n,1}$ as the group 
of units of the Boolean inverse monoid $C_{n}$,  called the {\em Cuntz inverse monoid}, constructed from the polycyclic inverse monoid $P_{n}$
using ideas from \cite{Lenz}.
This will set the results of \cite{Lawson2007b} in a more modern context.
See also \cite{LS}.
Recall that our earlier paper was itself reformulating part of \cite{Birget}
which in turn was developing ideas to be found in \cite{Scott1984, Scott1984b, Scott1992}.
In the remainder of this section, we shall recall the definition of the polycyclic inverse monoids as they appear in this paper.
They are most naturally defined in terms of free monoids, so we begin there.

Let $A_{n}$ be a finite alphabet with $n$ elements, where we assume that $n \geq 2$.
The {\em free monoid} on $A_{n}$ is denoted by $A_{n}^{\ast}$.
Its elements are called {\em strings} with  the identity element being the empty string $\varepsilon$.
The {\em length} of the string $x$ is denoted by $\mid x \mid$.
If $x = yz$, where $x,y,z$ are finite strings, we say that $y$ is a {\em prefix} of $x$.
If $x$ is a finite string then $\mathsf{Pref}(x)$ denotes the set of all prefixes of $x$.
For any subset $X$ of $A_{n}^{\ast}$ and any string $a \in A_{n}^{\ast}$,
define the set $a^{-1}X$ by the condition that $y \in a^{-1}X$  if and only if $ay \in X$.
For further properties of free monoids and all proofs, see \cite{Lallement}.

We shall be interested in right ideals in the monoid $A_{n}^{\ast}$.
Recall that a subset $R \subseteq A_{n}^{\ast}$ is a {\em right ideal}
if $r \in R$ and $x \in A_{n}^{\ast}$ implies that $rx \in R$.
Right ideals always have the form $XA_{n}^{\ast}$,where $X$ is any subset of $A_{n}^{\ast}$.
If this set is finite, then the right ideal is said to be {\em finitely generated};
if it is a singleton set, then the right ideal is said to be {\em principal}.
The first important property of the principal right ideals of $A_{n}^{\ast}$ 
is that the intersection of any two of them is either empty or itself a principal right ideal.
In fact, $xA_{n}^{\ast} \cap yA_{n}^{\ast}$ is equal to $xA_{n}^{\ast}$ if $y$ is a prefix of $x$;
it is equal to $yA_{n}^{\ast}$ if $x$ is a prefix of $y$;
and it is empty if neither of $x$ or $y$ is a prefix of the other.
If $xA_{n}^{\ast} \cap yA_{n}^{\ast} = \varnothing$ we say that $x$ and $y$ are 
{\em prefix incomparable}
whereas if this set is non-empty we say that $x$ and $y$ are {\em prefix comparable}.
Using the terminology from the theory of higher rank graphs,
we can say that free monoids are {\em singly aligned};
see \cite[Definition~20.1]{Exel}.

A finite subset of a free monoid is called a {\em prefix code} if any two distinct elements are prefix incomparable.
It is convenient to allow the empty set $\varnothing$ to be a prefix code.
A prefix code is said to be a {\em maximal prefix code} if every element of the free monoid
is prefix comparable with some element of the prefix code.
Prefix codes will play an important r\^ole in this paper.
We call $\{\varepsilon\}$ the {\em trivial maximal prefix code}.
The set $A_{n}$ is itself a maximal prefix code we call a {\em caret}.
See \cite{BP} for the theory pf prefix codes.
The following result is well-known but is basic in what follows.

\begin{lemma}\label{lem:colson} Let $X$ be any non-empty finite subset of $A_{n}^{\ast}$.
Then there is a prefix code $X' \subseteq X$ such that $X'A_{n}^{\ast} = XA_{n}^{\ast}$.
\end{lemma}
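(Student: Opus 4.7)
The plan is to produce $X'$ by discarding from $X$ every string that has a proper prefix already lying in $X$. Concretely, I would define
\[
X' = \{x \in X : \text{no proper prefix of } x \text{ belongs to } X\}.
\]
The non-emptiness of $X$ and the fact that lengths are natural numbers guarantee $X'$ is non-empty (pick any element of $X$ of minimal length).

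Next I would verify the two required properties. First, $X'$ is a prefix code: if $x, y \in X'$ are distinct with $x$ a prefix of $y$, then $x$ is a \emph{proper} prefix of $y$, so $y$ has a proper prefix lying in $X$, contradicting $y \in X'$. Second, $X'A_n^\ast = XA_n^\ast$. The inclusion $X'A_n^\ast \subseteq XA_n^\ast$ is immediate from $X' \subseteq X$. For the reverse inclusion, given any $y \in X$, consider the set of prefixes of $y$ that lie in $X$; this set is non-empty (it contains $y$) and finite, so it has an element $x$ of least length. By minimality, $x$ itself has no proper prefix in $X$, so $x \in X'$; and since $x$ is a prefix of $y$ we have $y \in xA_n^\ast \subseteq X'A_n^\ast$. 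Hence every generator of $XA_n^\ast$ lies in $X'A_n^\ast$, giving the required equality.

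There is really no hard step here: the argument is just a well-foundedness observation on the prefix order of $A_n^\ast$, using that the length function takes values in $\mathbb{N}$. The only thing to be slightly careful about is to distinguish \emph{proper} prefix from prefix when checking that $X'$ is a prefix code, so that distinct elements of $X'$ cannot have one being a prefix of the other.
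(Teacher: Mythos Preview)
Your proof is correct and is essentially the same as the paper's. The paper phrases the construction as taking the maximal elements of $X$ with respect to the partial order $x \preceq y$ iff $y$ is a prefix of $x$, which yields exactly your set $X'$; the verification that $X' A_n^\ast = X A_n^\ast$ is likewise the same well-foundedness argument.
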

\begin{proof} Define a binary relation $\preceq$ on $A_{n}^{\ast}$ by $x \preceq y$ if $y$ is a prefix of $x$.
This is in fact a partial order.
For any finite subset $X$, denote by $\mbox{max}(X)$ the set of $\preceq$-maximal elements of $X$.
This is a prefix code.
We claim that $XA_{n}^{\ast} = \mbox{max}(X)A_{n}^{\ast}$.
Only one direction needs proving.
Let $x \in X$.
If $x$ is $\preceq$-maximal there is nothing to prove so assume it is not.
Then $x \preceq x'$ for some $\preceq$-maximal element $x'$.
Thus $x = x'u$ for some finite string $u$.
This completes the proof.
\end{proof}

Free monoids deal with finite strings but
we shall also need to work with some kinds of infinite strings.
Let $A_{n}$ be our finite alphabet where $n \geq 2$.
We denote by $A_{n}^{\omega}$ the set of all right-infinite strings over $A_{n}$.
This set can be endowed with a topology that turns it into the Cantor space
where the open sets are the subsets of the form $XA_{n}^{\omega}$ where $X$ is a set of finite strings.
The sets of the form $XA_{n}^{\omega}$ are clopen precisely when $X$ is finite.
See \cite{PP} for details and proofs.
Recall that a {\em Boolean space} is a compact, Hausdorff space with a basis of clopen sets.
The Cantor space is such a Boolean space.
The following is well-known.
A proof can be deduced from \cite[Lemma~4.3]{Lawson2007b}.

\begin{proposition}\label{prop:mpc} Let $Z$ be a prefix code over the alphabet $A_{n}$.
Then it is a maximal prefix code if and only if $ZA_{n}^{\omega} = A_{n}^{\omega}$.
\end{proposition}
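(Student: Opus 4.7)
The plan is to prove both implications directly, using only the fact that a prefix code in this paper is finite (so its elements have bounded length) together with the basic dichotomy for prefix comparability of two finite strings.

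For the forward direction, suppose $Z$ is a maximal prefix code and take any $w \in A_{n}^{\omega}$. Let $M = \max \{|z| : z \in Z\}$, and consider the finite prefix $u$ of $w$ of length $M+1$. By maximality, $u$ is prefix comparable with some $z \in Z$. But $|u| > |z|$ rules out $u$ being a prefix of $z$, so $z$ must be a prefix of $u$ and hence of $w$. Therefore $w \in zA_{n}^{\omega} \subseteq ZA_{n}^{\omega}$, which gives $ZA_{n}^{\omega} = A_{n}^{\omega}$.

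For the converse, suppose $ZA_{n}^{\omega} = A_{n}^{\omega}$ and let $x \in A_{n}^{\ast}$ be any finite string. Pick any $a \in A_{n}$ and form the right-infinite string $\alpha = xaaa\cdots \in A_{n}^{\omega}$. By hypothesis there exist $z \in Z$ and $\beta \in A_{n}^{\omega}$ with $\alpha = z\beta$. Comparing lengths, either $|z| \le |x|$, in which case $z$ is a prefix of $x$, or $|x| < |z|$, in which case $x$ is a prefix of $z$. In either case $x$ is prefix comparable with an element of $Z$, so $Z$ is a maximal prefix code.

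There is no real obstacle here: the whole argument hinges on the finiteness of $Z$ (to supply the bound $M$) and on the standard prefix dichotomy already recorded earlier in the excerpt. If one preferred to avoid choosing a specific extension $\alpha$ in the converse, one could instead invoke the fact that the clopen set $xA_{n}^{\omega}$ is non-empty and must meet $zA_{n}^{\omega}$ for some $z \in Z$, which forces the same prefix dichotomy; but the direct argument above is the shortest route.
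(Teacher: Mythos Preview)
Your proof is correct. The paper itself does not supply a proof of this proposition; it simply records the result as well-known and points to \cite[Lemma~4.3]{Lawson2007b} for a derivation. Your direct argument---using the finiteness of $Z$ to bound the lengths in the forward direction, and extending an arbitrary finite string to an infinite one in the converse---is the standard elementary route and needs no external reference.
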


We can now begin to define the polycyclic inverse monoids.
We shall be interested in certain kinds of functions between right ideals of free monoids.
A function $\theta \colon R_{1} \rightarrow R_{2}$ between two right ideals of a monoid $S$ is called a {\em morphism}
if $\theta (rs) = \theta (r)s$ for all $r \in R_{1}$ and $s \in S$.
Morphisms are therefore the analogues of the right module homomorphisms in module theory.
As usual, if $\alpha$ is a bijective morphism then $\alpha^{-1}$ is also a bijective morphism.
In the first instance, we shall be interested in bijective morphisms between principal right ideals of free monoids.
However, observe that $xA_{n}^{\ast} = yA_{n}^{\ast}$ if and only if $x = y$.
We may therefore label a principal right ideal by the string that generates it.
Given strings $x,y \in A_{n}^{\ast}$ we may define a function $xy^{-1} \colon yA_{n}^{\ast} \rightarrow xA_{n}^{\ast}$
given by $yu \mapsto xu$.
This map is a bijective morphism.
Observe that $\varepsilon \varepsilon^{-1}$ is the identity morphism which we shall usually just write as $1$.\\

\noindent
{\bf Definition. }The set of all bijective morphisms between the principal right ideals of the free monoid $A_{n}^{\ast}$ 
together with the empty partial function $0$ is denoted by $P_{n}$.
It is, in fact, an inverse monoid under composition of partial functions and it is this which is
called the {\em polycyclic monoid on $n$ generators}.\\

Observe that the non-zero idempotents of the polycyclic inverse monoid are the elements of the form $xx^{-1}$,
the inverse of $xy^{-1}$ is $yx^{-1}$,
and the natural partial order (see below) is given by $xy^{-1} \leq uv^{-1}$ if and only if $(x,y) = (u,v)p$ for some finite string $p$.
It follows that the polycyclic inverse monoids are $E^{\ast}$-unitary (see below);
this implies that they are $\wedge$-semigroups \cite[Remark~2.3]{Lenz} (see below).
In this paper, we shall need a little inverse semigroup theory since this forms the setting for our work.

We refer the reader to \cite{Lawson1998} for background on inverse semigroups;
we recall some key definitions here.
An {\em inverse semigroup} is a semigroup in which for each element $a$ there is a unique element, denoted by $a^{-1}$,
such that $a = aa^{-1}a$ and $a^{-1} = a^{-1}aa^{-1}$.
The set of idempotents in $S$ is denoted by $\mathsf{E}(S)$.
It is called the {\em semilattice of idempotents} of $S$.
If $S$ is an inverse {\em monoid} its group of units is denoted by $\mathsf{U}(S)$.
Define $\mathbf{d}(a) = a^{-1}a$ and $\mathbf{r}(a) = aa^{-1}$.
Define the {\em natural partial order} on $S$ by $a \leq b$ if and only if $a = ba^{-1}a$.
It can be proved that with respect to this order, an inverse semigroup is partially ordered.
In addition, $a \leq b$ implies that $a^{-1} \leq b^{-1}$.
An inverse semigroup is called {\em $E$-unitary} if $e \leq a$, where $e$ is an idempotent, implies that $a$ is an idempotent.
An inverse semigroup with zero is called  {\em $E^{\ast}$-unitary} if $e \leq a$, where $e$ is a non-zero idempotent, implies that $a$ is an idempotent.
Define the {\em compatibility relation} $a \sim b$ precisely when $a^{-1}b$ and $ab^{-1}$ are both idempotents.
If $a \sim b$ we say that $a$ and $b$ are {\em compatible}.
A non-empty subset $X$ of an inverse semigroup is said to be {\em compatible}
if each pair of elements of $X$ is compatible.
Observe that if $a,b \leq c$ then $a \sim b$.
It follows that $a \sim b$ is a necessary condition for $a$ and $b$ to have a join $a \vee b$
with respect to the natural partial order.
Idempotents $e$ and $f$ are said to be {\em orthogonal}, written $e \perp f$, if $ef = 0$.
The elements $a$ and $b$ are said to be {\em orthogonal}, also written $a \perp b$, if $\mathbf{d}(a) \perp \mathbf{d}(b)$
and $\mathbf{r}(a) \perp \mathbf{r}(b)$.
Orthogonal elements are certainly compatible.
The join of orthogonal elements will also be called an {\em orthogonal join}.
The proof of the following is straightforward.

\begin{lemma}\label{lem:orthogonality}
In the polycyclic inverse monoid $P_{n}$, we have that $xx^{-1} \perp yy^{-1}$
if and only if $x$ and $y$ are prefix incomparable.
\end{lemma}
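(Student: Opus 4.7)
The plan is to unwind the definitions. The idempotent $xx^{-1}$ in $P_{n}$ is, by construction, the identity morphism on the principal right ideal $xA_{n}^{\ast}$, and similarly $yy^{-1}$ is the identity on $yA_{n}^{\ast}$. Since composition of partial identities on subsets of $A_{n}^{\ast}$ yields the partial identity on the intersection of the subsets, we have
\[
(xx^{-1})(yy^{-1}) = 1_{xA_{n}^{\ast} \,\cap\, yA_{n}^{\ast}}.
\]
Thus $xx^{-1} \cdot yy^{-1} = 0$ in $P_{n}$ if and only if $xA_{n}^{\ast} \cap yA_{n}^{\ast} = \varnothing$.

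Next I invoke the trichotomy for intersections of principal right ideals stated earlier in the introduction: $xA_{n}^{\ast} \cap yA_{n}^{\ast}$ is equal to $xA_{n}^{\ast}$ when $y$ is a prefix of $x$, equal to $yA_{n}^{\ast}$ when $x$ is a prefix of $y$, and empty precisely when neither of $x,y$ is a prefix of the other; the last case is exactly the definition of $x$ and $y$ being prefix incomparable.

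Combining these two observations gives $xx^{-1} \cdot yy^{-1} = 0$ iff $x$ and $y$ are prefix incomparable, which is exactly the statement $xx^{-1} \perp yy^{-1}$ for idempotents (recall that for idempotents $e,f$, the condition $e \perp f$ simplifies to $ef = 0$, since $\mathbf{d}(e) = e = \mathbf{r}(e)$). There is no real obstacle here; the only thing to be careful about is making sure that the product of the two partial identities really is the partial identity on the intersection, which is a standard fact about partial function composition and is immediate from the definition of the operation in $P_{n}$.
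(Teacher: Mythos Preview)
Your proof is correct and is exactly the straightforward unwinding of definitions that the paper has in mind; indeed the paper gives no proof at all, merely stating that the proof ``is straightforward.'' Your argument uses precisely the ingredients set up in the introduction (the identification of $xx^{-1}$ with the identity on $xA_{n}^{\ast}$, the trichotomy for intersections of principal right ideals, and the reduction of $e\perp f$ to $ef=0$ for idempotents), so there is nothing to add.
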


By Lemma~\ref{lem:orthogonality}, 
the subset $\{x_{1}, \ldots, x_{m} \}$ of $A_{n}^{\ast}$ is a prefix code if and only if   $\{x_{1}x_{1}^{-1}, \ldots, x_{m}x_{m}^{-1} \}$
is an orthogonal subset of $P_{n}$.
Now, $x = yp$ if and only if $xx^{-1} \leq yy^{-1}$.
The proof of the following is now straightforward.

The following lemma will be useful to us.
See \cite[Lemma 1.4.11 and 1.4.12]{Lawson1998} for a proof.
We may paraphrase it by saying that compatible elements have a meet which is algebraically defined.

\begin{lemma}\label{lem:compatible-meets} Let $S$ be an inverse semigroup.
If $a \sim b$ then $a \wedge b$ exists and is equal to $ab^{-1}b = ba^{-1}a$ and we have that
$\mathbf{d} (a \wedge b) = \mathbf{d}(a)\mathbf{d}(b)$ and $\mathbf{r} (a \wedge b) = \mathbf{r}(a)\mathbf{r}(b)$.
\end{lemma}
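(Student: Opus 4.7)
The plan is to verify directly that $ab^{-1}b$ is the greatest common lower bound of $a$ and $b$ in the natural partial order, deduce by symmetry that it equals $ba^{-1}a$, and then compute $\mathbf{d}$ and $\mathbf{r}$ of this common value.

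The whole argument is powered by one preliminary observation: since $ab^{-1}$ is an idempotent it is self-inverse, so $ab^{-1} = ba^{-1}$; and likewise, since $a^{-1}b$ is an idempotent, $a^{-1}b = b^{-1}a$. I would use these two substitutions repeatedly.

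First, $ab^{-1}b$ is a common lower bound of $a$ and $b$: right-multiplying $a$ by the idempotent $b^{-1}b$ gives $ab^{-1}b \leq a$, while writing $ab^{-1}b = (ab^{-1})b$ exhibits it as a left-restriction of $b$ by the idempotent $ab^{-1}$, so $ab^{-1}b \leq b$. For maximality, suppose $c \leq a$ and $c \leq b$, so $c = c\mathbf{d}(a) = c\mathbf{d}(b)$. The expansion
\[
(ab^{-1}b)^{-1}(ab^{-1}b) = b^{-1}b \cdot a^{-1}a \cdot b^{-1}b = \mathbf{d}(a)\mathbf{d}(b),
\]
obtained using the commutativity of idempotents together with $(b^{-1}b)^{2} = b^{-1}b$, then yields $c(ab^{-1}b)^{-1}(ab^{-1}b) = c\mathbf{d}(a)\mathbf{d}(b) = c$, which is equivalent to $c \leq ab^{-1}b$. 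Hence $ab^{-1}b = a \wedge b$, and the symmetric argument swapping $a$ and $b$ gives $ba^{-1}a = a \wedge b$, so the two expressions coincide. The same display also establishes $\mathbf{d}(a \wedge b) = \mathbf{d}(a)\mathbf{d}(b)$.

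For $\mathbf{r}(a \wedge b)$, the direct computation
\[
(ab^{-1}b)(ab^{-1}b)^{-1} = a(b^{-1}b)(b^{-1}b)a^{-1} = a(b^{-1}b)a^{-1} = (ab^{-1})(ba^{-1}) = (ab^{-1})^{2} = ab^{-1}
\]
uses $ba^{-1} = ab^{-1}$ and idempotency, while the parallel manipulation
\[
aa^{-1}bb^{-1} = a(a^{-1}b)b^{-1} = a(b^{-1}a)b^{-1} = (ab^{-1})^{2} = ab^{-1}
\]
identifies the result with $\mathbf{r}(a)\mathbf{r}(b)$. The main obstacle is recognising that establishing the $\mathbf{r}$-formula requires both self-inverse identities: simplifying $a(b^{-1}b)a^{-1}$ needs $ba^{-1} = ab^{-1}$, while the mirror identification of $aa^{-1}bb^{-1}$ with the same idempotent needs $a^{-1}b = b^{-1}a$.
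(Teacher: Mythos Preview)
The paper does not prove this lemma; it simply cites Lawson's book \cite[Lemma 1.4.11 and 1.4.12]{Lawson1998}. Your direct verification is the standard argument and is essentially correct.

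There is one imprecision in the maximality step. You assert that $c(ab^{-1}b)^{-1}(ab^{-1}b) = c$ ``is equivalent to $c \leq ab^{-1}b$.'' In a general inverse semigroup this is false: the equation $cd^{-1}d = c$ only says $\mathbf{d}(c) \leq \mathbf{d}(d)$, not $c \leq d$ (for example, take $c$ to be the identity and $d$ any non-identity unit in a symmetric inverse monoid). What rescues the step here is that both $c$ and $ab^{-1}b$ already lie below $a$. From $c \leq a$ you have $c = a\,c^{-1}c$, and from $\mathbf{d}(c) \leq \mathbf{d}(a)\mathbf{d}(b)$ you have $c^{-1}c = b^{-1}b\,c^{-1}c$; combining these gives
\[
c = a\,c^{-1}c = a\,b^{-1}b\,c^{-1}c = (ab^{-1}b)\,c^{-1}c,
\]
which is precisely $c \leq ab^{-1}b$. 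With this adjustment the proof is complete.
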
 

An inverse monoid is said to be {\em distributive} if each pair of compatible elements has a join
and multiplication distributes over such joins.
A {\em morphism} of distributive inverse semigroups maps compatible joins to compatible joins.
A distributive inverse monoid is said to be {\em Boolean}
if its idempotents form a Boolean algebra under the natural partial order.
An inverse semigroup is called a {\em $\wedge$-semigroup} or a {\em meet semigroup} if each pair of elements has a meet.
A pseudogroup is an inverse semigroup in which every compatible subset has a join and multiplication distributes
over such joins.
The following lemma tells us how meets and joins interact in a distributive inverse semigroup.
For a proof see \cite[Lemma 2.5]{Lawson2016}.

\begin{lemma}\label{lem:meets-joins}
Let $S$ be a distributive inverse semigroup.
Suppose that $\bigvee_{i} a_{i}$ is defined and $b \wedge \left( \bigvee_{i} a_{i}  \right)$ is defined.
Then all meets $b \wedge a_{i}$ are defined, the join $\bigvee_{i} b \wedge a_{i}$ is defined and
$$b \wedge \left( \bigvee_{i} a_{i}  \right)
=
\bigvee_{i} b \wedge a_{i}.$$
\end{lemma}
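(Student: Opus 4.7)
The plan is to guess the correct formula for $b \wedge a_i$ and then verify that both sides of the desired equation collapse to the single element $m := b \wedge \bigvee_i a_i$. Write $c = \bigvee_i a_i$, so by hypothesis both $c$ and $m$ exist; from $m \leq c$ and $a_i \leq c$ one extracts the standing identities $m = c\, \mathbf{d}(m)$ and $a_i = c\, \mathbf{d}(a_i)$, which will be used repeatedly.

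The central claim is that $m_i := m\, \mathbf{d}(a_i) = a_i\, \mathbf{d}(m)$ is precisely $b \wedge a_i$. Equality of the two expressions is an easy calculation: substitute $m = c\, \mathbf{d}(m)$, use that idempotents in $\mathsf{E}(S)$ commute, and re-substitute $a_i = c\, \mathbf{d}(a_i)$. To see that $m_i$ is a lower bound of $b$ and $a_i$, note that $m_i \leq m \leq b$ from the first expression, while $m_i \leq a_i$ from the second (in each case multiplication on the right by an idempotent takes an element below itself). For maximality, suppose $x \leq b$ and $x \leq a_i$; then $x \leq a_i \leq c$, hence $x \leq b \wedge c = m$, and in addition $\mathbf{d}(x) \leq \mathbf{d}(a_i)$. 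Combining $x = m\, \mathbf{d}(x)$ with $\mathbf{d}(x) = \mathbf{d}(x)\, \mathbf{d}(a_i)$ gives $x = m\, \mathbf{d}(a_i)\, \mathbf{d}(x) = m_i\, \mathbf{d}(x)$, so $x \leq m_i$, as required.

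Once $b \wedge a_i = a_i\, \mathbf{d}(m)$ is in hand, the remaining statement is immediate from distributivity: right multiplication by the idempotent $\mathbf{d}(m)$ distributes over the existing join $\bigvee_i a_i$, yielding
$$\bigvee_i (b \wedge a_i) \;=\; \bigvee_i \bigl( a_i\, \mathbf{d}(m) \bigr) \;=\; \left( \bigvee_i a_i \right) \mathbf{d}(m) \;=\; c\, \mathbf{d}(m) \;=\; m.$$
The existence of the join on the left falls out of the distributive law itself, and the final equality $m = b \wedge \bigvee_i a_i$ is simply the definition of $m$.

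The only point where care is needed is the commutation identity $m\, \mathbf{d}(a_i) = a_i\, \mathbf{d}(m)$; once that is in place the remainder is bookkeeping with the natural partial order, and no hypothesis beyond distributivity (no pseudogroup or $\wedge$-semigroup assumption) is invoked. In particular one avoids any preliminary detour through $\mathbf{d}\bigl(\bigvee_i a_i\bigr) = \bigvee_i \mathbf{d}(a_i)$, which would otherwise be the tempting but unnecessarily cumbersome route.
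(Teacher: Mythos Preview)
Your argument is correct. The paper itself does not supply a proof of this lemma; it simply refers the reader to \cite[Lemma~2.5]{Lawson2016}, so there is no in-paper argument to compare against directly.

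That said, your route is clean and self-contained: identifying $b \wedge a_i$ explicitly as $a_i\,\mathbf{d}(m)$ (equivalently $m\,\mathbf{d}(a_i)$), where $m = b \wedge \bigvee_i a_i$, and then invoking distributivity of right multiplication by the idempotent $\mathbf{d}(m)$ over the given join. The commutation step $m\,\mathbf{d}(a_i) = a_i\,\mathbf{d}(m)$ via the common upper bound $c$ is exactly the point where the existence of $m$ is exploited, and you handle it correctly. The maximality check for $m_i$ is sound as written. One small remark: the paper's definition of a distributive inverse semigroup is phrased in terms of binary compatible joins, so the lemma is implicitly about finite index sets (or else ``distributes over such joins'' must be read as applying to any compatible join that happens to exist); your proof is valid under either reading, since you only use distributivity over the single join $\bigvee_i a_i$ that is assumed to exist.
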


The minimum group congruence $\sigma$ on an inverse semigroup is defined by $a \, \sigma \, b$ if and only if
there exists an element $c$ such that $c \leq a,b$.
See \cite[Section~2.4]{Lawson1998} for more information on the minimum group congruence.
An inverse semigroup is said to be {\em $F$-inverse} if each $\sigma$-class contains a maximum element.
Every $F$-inverse semigroup is $E$-unitary.

A congruence $\rho$ on a semigroup with zero $S$ is said to be {\em $0$-restricted} if $0\, \rho \, a$ implies that $a = 0$.
A congruence $\rho$ on an inverse semigroup is said to be {\em idempotent-pure} if $e \,\rho \,a$, where $e$ is an idempotent, implies that $a$ is an idempotent.
If $a\, \rho \, b$, where $\rho$ is idempotent-pure, then $a \sim b$.
This observation will be important in this paper.

Let $(P,\leq)$ be a poset.
For each non-empty subset $A \subseteq P$, define $A^{\downarrow}$ to be all those elements of $P$ below some element of $A$
and define $A^{\uparrow}$ to be all those elements above some element of $A$.
If $A = A^{\downarrow}$ we say that $A$ is an {\em order ideal}.
If $A = A^{\uparrow}$ we say that $A$ is {\em closed upwards}.
If $A = \{a\}$ we write $a^{\downarrow}$ instead of $\{a\}^{\downarrow}$ and 
$a^{\uparrow}$ instead of $\{a\}^{\uparrow}$.

The notion of a `tight cover' will play an important r\^ole throughout this paper.
Let $S$ be an inverse semigroup.
Consider a subset $\{a_{1}, \ldots, a_{m} \} \subseteq a^{\downarrow}$.
Observe that if $x \leq a$ then $x$ is compatible with any $a_{i}$.
It follows that all meets $x \wedge a_{i}$ exist by Lemma~\ref{lem:compatible-meets}.
We say that $\{a_{1}, \ldots, a_{m} \}$ is a {\em tight cover} of $a$ if $0 < x \leq a$ implies that $x \wedge a_{i} \neq 0$ for some $i$. 
If $\{b\}$ is such that $b \leq a$ and is a tight cover then we write $b \leq_{e} a$ and say that
$b$ is {\em essential} in $a$.
The proof of the following is routine using Lemma~\ref{lem:meets-joins}.

\begin{lemma}\label{lem:nanaimo} Let $S$ be a distributive inverse semigroup.
Then $\{a_{1}, \ldots, a_{m}\}$ is a tight cover of $a$ if and only if
$\bigvee_{i=1}^{m} a_{i} \leq_{e} a$.
\end{lemma}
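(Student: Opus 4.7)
The plan is to set $b = \bigvee_{i=1}^{m} a_{i}$; this join exists in $S$ because the $a_{i}$, all lying beneath the common upper bound $a$, are pairwise compatible, and a distributive inverse semigroup admits all joins of compatible finite subsets. Plainly $b \leq a$, so the statement $b \leq_{e} a$ makes sense. The whole argument then consists in translating the disjunctive condition ``$x \wedge a_{i} \neq 0$ for some $i$'' into the single condition ``$x \wedge b \neq 0$'', using monotonicity of meets in one direction and Lemma~\ref{lem:meets-joins} in the other.

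For the forward implication I assume $\{a_{1}, \ldots, a_{m}\}$ is a tight cover of $a$ and take $0 < x \leq a$. Tightness supplies an index $i$ with $x \wedge a_{i} \neq 0$. Because $x$ and $b$ are compatible (both lying below $a$), the meet $x \wedge b$ exists by Lemma~\ref{lem:compatible-meets}, and since $a_{i} \leq b$ any common lower bound of $x$ and $a_{i}$ is a common lower bound of $x$ and $b$; hence $0 \neq x \wedge a_{i} \leq x \wedge b$. This is exactly the essential-cover condition $b \leq_{e} a$.

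For the converse I assume $b \leq_{e} a$ and take $0 < x \leq a$. Essentiality gives $x \wedge b \neq 0$, and the hypotheses of Lemma~\ref{lem:meets-joins} are now in place: $\bigvee_{i} a_{i}$ exists by construction and $x \wedge b$ exists by Lemma~\ref{lem:compatible-meets}. The lemma then yields
\[
x \wedge b \;=\; x \wedge \Bigl( \bigvee_{i} a_{i} \Bigr) \;=\; \bigvee_{i} (x \wedge a_{i}),
\]
and since a join in an inverse semigroup with zero is nonzero only when at least one joinand is nonzero, some $x \wedge a_{i} \neq 0$. This is precisely what tightness demands.

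I do not expect any serious obstacle: the argument is really a small bookkeeping exercise with Lemmas~\ref{lem:compatible-meets} and~\ref{lem:meets-joins}. The only point requiring any care is justifying that the joins and meets appearing are actually defined, which in each case reduces to the observation that all the elements in sight lie beneath $a$ and are therefore pairwise compatible.
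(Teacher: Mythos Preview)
Your proof is correct and follows exactly the approach the paper indicates: the argument is routine once Lemma~\ref{lem:meets-joins} is invoked to distribute $x \wedge (-)$ over the join $\bigvee_i a_i$, together with Lemma~\ref{lem:compatible-meets} to guarantee the relevant meets exist. The paper gives no further detail than ``routine using Lemma~\ref{lem:meets-joins}'', and your write-up simply unpacks this.
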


Let $S$ be an inverse semigroup.
A subset $A \subseteq S$ is a {\em filter} if $A = A^{\uparrow}$
and if $a,b \in A$ there exists $c \in A$ such that $c \leq a,b$.
It is {\em proper} if it does not contain $0$.
The proper filter $A$ is {\em tight} if $a \in A$ and  $\{a_{1}, \ldots, a_{m}\}$ 
a tight cover of $a$ implies that $a_{i} \in A$ for some $i$.
A maximal proper filter is called an {\em ultrafilter}.
If $S$ is a distributive inverse semigroup a proper filter $A$ is said to be {\em prime}
if $a \vee b \in A$ implies that $a \in A$ or $b \in A$.

\begin{lemma}\label{lem:tate} Let $\{x_{1}x_{1}^{-1}, \ldots, x_{p}x_{p}^{-1} \}$ be a tight cover of $xx^{-1}$.
Put $X = \{x_{1},\ldots, x_{p} \}$ and $X' = \mbox{\rm max}(X)$.
Then $\{yy^{-1} \colon y \in X' \}$ is a tight cover of $xx^{-1}$.
\end{lemma}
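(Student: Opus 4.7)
The plan is to reduce the tight-cover property for $\{yy^{-1} : y \in X'\}$ to the one already assumed for $\{x_{i}x_{i}^{-1}\}$, by observing that each $x_{i}x_{i}^{-1}$ is dominated by some $yy^{-1}$ with $y \in X'$. The key dictionary, recorded just before the statement of the lemma, is that $uu^{-1} \leq vv^{-1}$ in $P_{n}$ if and only if $v$ is a prefix of $u$; equivalently, $x_{i}x_{i}^{-1} \leq yy^{-1}$ if and only if $y \preceq x_{i}$ in the order used in Lemma~\ref{lem:colson}.

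With this dictionary in hand, I would proceed in three short steps. First, $\{yy^{-1} : y \in X'\} \subseteq (xx^{-1})^{\downarrow}$, since $X' \subseteq X$ and each $x_{i}x_{i}^{-1}$ was already below $xx^{-1}$ by hypothesis. Second, for every $x_{i} \in X$ there is some $y \in X'$ with $x_{i}x_{i}^{-1} \leq yy^{-1}$: the finite set $\{z \in X : z \preceq x_{i}\}$ always contains a $\preceq$-maximal element, which by definition lies in $X' = \mbox{max}(X)$, and the dictionary converts $y \preceq x_{i}$ into the desired order relation on idempotents. Third, given any $0 < b \leq xx^{-1}$, tightness of the original cover produces an index $i$ with $b \wedge x_{i}x_{i}^{-1} \neq 0$; choose $y \in X'$ with $x_{i}x_{i}^{-1} \leq yy^{-1}$ as in Step~2. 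Since $b$, $x_{i}x_{i}^{-1}$ and $yy^{-1}$ all lie below $xx^{-1}$, they are pairwise compatible and the relevant meets exist by Lemma~\ref{lem:compatible-meets}. Monotonicity of the meet then gives $0 < b \wedge x_{i}x_{i}^{-1} \leq b \wedge yy^{-1}$, which is exactly the required tightness.

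I do not anticipate a serious obstacle here: the lemma merely records that replacing an idempotent cover by its maximal (in the idempotent order) elements preserves tightness. The only sub-step that warrants mention is the monotonicity of $b \wedge (\cdot)$ on a compatible family, which is immediate either from the explicit formula $b \wedge c = bc^{-1}c$ supplied by Lemma~\ref{lem:compatible-meets}, or from the purely order-theoretic observation that any lower bound of $\{b,u\}$ is automatically a lower bound of $\{b,v\}$ whenever $u \leq v$.
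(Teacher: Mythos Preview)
Your argument is correct and follows the same line as the paper's: find $y\in X'$ that is a prefix of $x_i$ (so $x_ix_i^{-1}\le yy^{-1}$) and then lift the tight-cover witness from $x_ix_i^{-1}$ to $yy^{-1}$; the paper does this with explicit string witnesses $w=zv=x_it=yst$, while you do it via monotonicity of the meet. One notational slip: in the paper's convention (Lemma~\ref{lem:colson}) $x\preceq y$ means $y$ is a prefix of $x$, so the correct dictionary reads $x_ix_i^{-1}\le yy^{-1}\iff x_i\preceq y$, and in Step~2 you should take the up-set $\{z\in X: x_i\preceq z\}$, whose $\preceq$-maximal element then lies in $X'=\max(X)$ as desired.
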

\begin{proof} Let $zz^{-1} \leq xx^{-1}$.
Thus $z = xu$ for some finite string $u$.
By definition, there exists $w$ such that $w = zv = x_{i}t$ for some finite strings $v$ and $t$.
If $x_{i} \in X'$ then we are done, otherwise there exists $y \in X'$ such that $x_{i} = ys$ for some finite string $s$.
It follows that $w = zv = yst$.
Thus $ww^{-1} \leq zz^{-1}, yy^{-1}$.
This proves that  $\{yy^{-1} \colon y \in X' \}$ is a tight cover of $xx^{-1}$.
\end{proof}

Observe that the set $\{yy^{-1} \colon y \in X' \}$ above is an orthogonal set by Lemma~\ref{lem:orthogonality}.

\begin{lemma}\label{lem:zorn} Let $S$ be an inverse semigroup with zero.
\begin{enumerate}
\item Then every non-zero element is contained in an ultrafilter. 
\item Every proper filter is contained in an ultrafilter.
\end{enumerate}
\end{lemma}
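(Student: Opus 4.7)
The plan is to prove statement (2) first by a direct application of Zorn's lemma, and then derive (1) as an immediate consequence.

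For (2), let $F$ be a proper filter and let $\mathcal{F}$ be the collection of all proper filters of $S$ that contain $F$, ordered by inclusion. This collection is non-empty since $F \in \mathcal{F}$. To apply Zorn's lemma, I must verify that every non-empty chain $\{F_i\}_{i \in I}$ in $\mathcal{F}$ has an upper bound. The natural candidate is $G = \bigcup_{i} F_{i}$, so I would check the three conditions. First, $G$ is closed upwards, being a union of upwards closed sets. Second, $G$ is downward directed: given $a, b \in G$, the chain condition ensures both lie in a common $F_{i}$, so there is some $c \in F_{i} \subseteq G$ with $c \leq a,b$. Third, $0 \notin G$ because $0 \notin F_{i}$ for every $i$. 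Hence $G$ is a proper filter containing $F$, so $G \in \mathcal{F}$, and it is plainly an upper bound of the chain. By Zorn's lemma, $\mathcal{F}$ has a maximal element $M$. By construction, any proper filter strictly containing $M$ would also contain $F$, contradicting maximality of $M$ in $\mathcal{F}$; so $M$ is in fact a maximal proper filter of $S$, i.e., an ultrafilter.

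For (1), given a non-zero element $a \in S$, consider the principal filter $a^{\uparrow} = \{b \in S \colon a \leq b\}$. This is upward closed by definition, and for any $b, c \in a^{\uparrow}$ the element $a$ itself lies in $a^{\uparrow}$ with $a \leq b,c$, so $a^{\uparrow}$ is a filter. Since $0$ is the bottom of $S$ and $a \neq 0$, we have $0 \notin a^{\uparrow}$, so the filter is proper. Applying (2) to $a^{\uparrow}$ yields an ultrafilter containing $a^{\uparrow}$, and in particular containing $a$.

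The only mildly delicate point in this proof is confirming that the union of a chain of proper filters really is a filter — and this is the step where the chain hypothesis is used in an essential way (to find a common $F_i$ containing two given elements). Everything else is bookkeeping.
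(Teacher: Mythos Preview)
Your proof is correct and uses essentially the same approach as the paper: both apply Zorn's lemma to the poset of proper filters containing a given one, verifying that the union of a chain of proper filters is again a proper filter. The only difference is organizational --- the paper proves (1) directly (starting from the principal filter $s^{\uparrow}$ and applying Zorn's lemma) and then remarks that (2) is similar, whereas you prove (2) first and deduce (1) from it; your ordering is arguably a bit cleaner since (1) genuinely is a special case of (2).
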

\begin{proof} (1) Let $s \neq 0$.
Then $s^{\uparrow}$ is a proper filter containing $s$.
The set of all proper filters that contain $s$ is non-empty and is partially ordered by set inclusion.
The union of every chain of proper filters is a proper filter.
Thus by Zorn's lemma the set of all proper filters containing $s$ has a maximal element.
It follows that every non-zero element of $S$ is contained in an ultrafilter.
(2) The proof is similar to that in (1).
\end{proof}

\begin{lemma}\label{lem:filter-inclusions} In an inverse semigroup, every ultrafilter is a tight filter, 
and in a distributive inverse semigroup every tight filter is a prime filter.
\end{lemma}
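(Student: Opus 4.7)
I would handle the two implications separately.

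For the ultrafilter-to-tight implication, the natural strategy is a maximality argument by contradiction. Let $A$ be an ultrafilter, $a \in A$, and $\{a_{1}, \ldots, a_{m}\}$ a tight cover of $a$; suppose $a_{i} \notin A$ for every $i$. Note that for any $c \in A$ with $c \leq a$, the elements $c$ and $a_{i}$ both lie below $a$, hence are compatible, so by Lemma~\ref{lem:compatible-meets} the meet $a_{i} \wedge c$ exists. A routine verification shows that
\[
B_{i} = \{z \in S : z \geq a_{i} \wedge c \text{ for some } c \in A \text{ with } c \leq a\}
\]
is a filter containing $A \cup \{a_{i}\}$. Maximality of $A$, together with $a_{i} \notin A$, forces $B_{i}$ to be improper, so there exists $c_{i} \in A$ with $c_{i} \leq a$ and $c_{i} \wedge a_{i} = 0$. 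Iterating the filter property of $A$ produces $d \in A$ with $d \leq a$ and $d \leq c_{i}$ for every $i$. Since $0 < d \leq a$, the tight cover property yields a $j$ with $d \wedge a_{j} \neq 0$; but $d \leq c_{j}$ then forces $d \wedge a_{j} \leq c_{j} \wedge a_{j} = 0$, a contradiction.

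For the tight-to-prime implication, let $A$ be a tight filter in a distributive inverse semigroup and suppose $a \vee b \in A$. I claim that $\{a, b\}$ is a tight cover of $a \vee b$. Indeed, if $0 < x \leq a \vee b$, then Lemma~\ref{lem:meets-joins} gives $x = x \wedge (a \vee b) = (x \wedge a) \vee (x \wedge b)$, so the two meets $x \wedge a$ and $x \wedge b$ cannot both be zero. Tightness of $A$ then yields $a \in A$ or $b \in A$, so $A$ is prime.

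The main obstacle lies in part one: one must carefully justify that the set $B_{i}$ really is a filter (upward closed, downward directed, containing $A$ and $a_{i}$) in an inverse semigroup that is \emph{not} assumed to be a meet semigroup. Every meet appearing in the argument needs to be individually produced via compatibility and Lemma~\ref{lem:compatible-meets}, and the monotonicity step $d \wedge a_{j} \leq c_{j} \wedge a_{j}$ requires a brief appeal to the universal property of meets. Once these technicalities are in hand, the rest of the argument is a clean piece of bookkeeping.
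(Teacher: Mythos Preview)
Your argument is correct and shares the same skeleton as the paper's proof: for the first implication you separate each $a_{i}$ from the ultrafilter by producing $c_{i}\in A$ with $a_{i}\wedge c_{i}=0$, intersect the $c_{i}$, and contradict the covering condition; for the second you observe that $\{a,b\}$ tightly covers $a\vee b$. The difference is in how the separating elements are obtained. The paper does not build your filters $B_{i}$ directly inside the inverse semigroup; instead it invokes \cite[Proposition~2.13]{Lawson2010} to pass to the semilattice of idempotents, where all meets exist for free, and then appeals to the standard ultrafilter criterion \cite[Lemma~12.3]{Exel} to get the $f_{i}$. Your route avoids both external references by manufacturing every needed meet through compatibility and Lemma~\ref{lem:compatible-meets}; this makes the argument self-contained but forces you to verify by hand the monotonicity and directedness facts that are automatic in a meet semilattice. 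For the second implication the paper simply cites Lemma~\ref{lem:nanaimo}, of which your computation with Lemma~\ref{lem:meets-joins} is precisely the relevant special case.
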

\begin{proof} We prove first that every ultrafilter is a tight filter.
By \cite[Proposition 2.13]{Lawson2010},
it is enough to prove the result for ultrafilters in meet semilattices with zero.
Let $F$ be such an ultrafilter in a meet semilattice with zero and suppose that 
$e \in F$ and $\{e_{1},\ldots, e_{m}\}$ is a cover of $e$.
Suppose that none of $e_{1},\ldots, e_{m}$ belongs to $F$.
Then for each $e_{i}$ there exists $f_{i} \in F$ such that $e_{i} \wedge f_{i} = 0$ by \cite[Lemma 12.3]{Exel}.
Put $f = \bigwedge_{i=1}^{m} f_{i}$.
Then $f$ is non-zero and belongs to $F$. 
Thus $e \wedge f$ is non-zero and belongs to $F$.
But $0 < e \wedge f \leq e$ and $(e \wedge f) \wedge e_{i} = 0$ for all $i$.
This contradicts the assumption that  $\{e_{1},\ldots, e_{m}\}$ is a cover of $e$.
The proof that every tight filter is a prime filter is immediate by Lemma~\ref{lem:nanaimo}.
\end{proof}

Let $S$ be an arbitrary inverse semigroup with zero.
If $A$ is an ultrafilter then $\mathbf{d}(A) = (A^{-1}A)^{\uparrow}$
is an ultrafilter.
Similarly, $\mathbf{r}(A) = (AA^{-1})^{\uparrow}$
is an ultrafilter.
Let $A$ and $B$ be ultrafilters.
If $\mathbf{d}(A) = \mathbf{r}(B)$ then $A \cdot B = (AB)^{\uparrow}$ is an ultrafilter.
The proofs of all the above results can be deduced from \cite{Lawson2010}.

\begin{lemma}\label{lem:groupoids} Let $S$ be an inverse semigroup with zero.
Then $\mathsf{G}(S)$, the set of all ultrafilters of $S$ equipped with the partial binary operation $\cdot$,
is a groupoid.
\end{lemma}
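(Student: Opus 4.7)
The plan is to verify the three axioms of a groupoid for the partial operation $\cdot$ on ultrafilters: the identity laws, the existence of two-sided inverses, and associativity where defined. The hard work has already been outsourced to the citation \cite{Lawson2010}: we may take for granted that $\mathbf{d}(A)$ and $\mathbf{r}(A)$ are ultrafilters and that $A \cdot B = (AB)^{\uparrow}$ is an ultrafilter whenever $\mathbf{d}(A) = \mathbf{r}(B)$. What remains is essentially algebraic bookkeeping, so I would just carefully transport the corresponding identities of $S$ through the upward closures.

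For the identity laws, I would first observe that $\mathbf{d}(A)$ consists of idempotents (being generated by elements of the form $a^{-1}a$), and hence $\mathbf{r}(\mathbf{d}(A)) = \mathbf{d}(A) = \mathbf{d}(A \cdot \mathbf{d}(A))$, so the product $A \cdot \mathbf{d}(A)$ is defined. The set-theoretic product $A \, \mathbf{d}(A)$ contains $a \cdot a^{-1}a = a$ for every $a \in A$, so $A \subseteq A \cdot \mathbf{d}(A)$; since both sides are proper filters and the right-hand side is an ultrafilter, maximality of $A$ forces equality. The law $\mathbf{r}(A) \cdot A = A$ is symmetric. For the inverse, I would define $A^{-1} = \{a^{-1} : a \in A\}^{\uparrow}$; the order-compatibility $a \leq b \Rightarrow a^{-1} \leq b^{-1}$ recalled in the preamble, together with the fact that inversion is an involution, shows that $A^{-1}$ is a filter and that any strictly larger filter would force a strictly larger filter containing $A$, so $A^{-1}$ is an ultrafilter. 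One checks directly that $\mathbf{r}(A^{-1}) = \mathbf{d}(A)$ and $\mathbf{d}(A^{-1}) = \mathbf{r}(A)$, so both products $A \cdot A^{-1}$ and $A^{-1} \cdot A$ are defined; they contain $aa^{-1}$ and $a^{-1}a$ for every $a \in A$, so they contain $\mathbf{r}(A)$ and $\mathbf{d}(A)$ respectively, and again maximality gives equality.

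For associativity, suppose $(A \cdot B) \cdot C$ is defined. Then $\mathbf{d}(A) = \mathbf{r}(B)$ and $\mathbf{d}(A \cdot B) = \mathbf{r}(C)$. Using the identity laws already proved together with the fact that the domain and range maps behave as expected on products, one obtains $\mathbf{d}(A \cdot B) = \mathbf{d}(B)$, so $\mathbf{d}(B) = \mathbf{r}(C)$ and hence $A \cdot (B \cdot C)$ is defined. Both triple products equal $(ABC)^{\uparrow}$, because associativity of multiplication in $S$ together with the trivial identity $((XY)^{\uparrow} Z)^{\uparrow} = (XYZ)^{\uparrow} = (X (YZ)^{\uparrow})^{\uparrow}$ for subsets $X,Y,Z \subseteq S$ allows one to collapse the upward closures.

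The main obstacle, then, is not conceptual but clerical: ensuring that the upward closures behave well with respect to products (so that $(AB)^{\uparrow}$ really is the ultrafilter generated by the set-theoretic product) and that the equality $\mathbf{d}(A \cdot B) = \mathbf{d}(B)$ is established before it is used in the associativity argument. Once these points are settled, each groupoid axiom follows from its counterpart at the level of elements of $S$ combined with the maximality of ultrafilters.
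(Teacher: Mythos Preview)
Your proposal is correct and in fact supplies considerably more detail than the paper does: the paper states this lemma without any proof environment at all, simply deferring the verification to \cite{Lawson2010} via the sentence preceding the lemma (``The proofs of all the above results can be deduced from \cite{Lawson2010}''). Your direct verification of the identity, inverse, and associativity axioms is exactly the kind of argument that reference contains, and your bookkeeping is sound; the one small slip is the phrase ``$\mathbf{d}(A)$ consists of idempotents'', since $\mathbf{d}(A)$ is only the upward closure of a set of idempotents, but you immediately clarify this and it does not affect the argument.
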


Let $V_{s}$ be the set of all ultrafilters that contain the element $s$.
Put $\tau = \{V_{s} \colon s \in S\}$.
By Lemma~\ref{lem:zorn}, we have that $s \neq 0$ implies that $V_{s} \neq \varnothing$.

\begin{lemma} With the above definition, the set
$\tau$ is a base for a topology on the set of all ultrafilters $\mathsf{G}(S)$ on $S$.
\end{lemma}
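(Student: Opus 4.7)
The plan is to verify the two standard axioms for a base: first, that $\bigcup_{s \in S} V_{s} = \mathsf{G}(S)$, and second, that given any two basic sets $V_{s}$ and $V_{t}$ and any ultrafilter $A \in V_{s} \cap V_{t}$, there exists some $u \in S$ with $A \in V_{u} \subseteq V_{s} \cap V_{t}$.

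For the covering property, I would argue as follows. Let $A$ be any ultrafilter. Since $A$ is a proper filter, it is non-empty and does not contain $0$, so there exists some non-zero $s \in A$, which immediately gives $A \in V_{s}$. Hence every ultrafilter lies in at least one basic set.

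For the intersection property, suppose $A \in V_{s} \cap V_{t}$, so $s, t \in A$. Because $A$ is a filter, the filter condition provides some $c \in A$ with $c \leq s$ and $c \leq t$. Since $A$ is proper, $c \neq 0$, so $V_{c} \in \tau$ and $A \in V_{c}$. To see that $V_{c} \subseteq V_{s} \cap V_{t}$, take any $B \in V_{c}$; then $c \in B$, and since $B$ is upward closed and $c \leq s, t$, we conclude $s, t \in B$, i.e., $B \in V_{s} \cap V_{t}$.

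I do not expect any real obstacle here: the whole argument is a direct unpacking of the definitions of filter, proper filter, and the sets $V_{s}$, together with the fact (already noted in Lemma~\ref{lem:zorn} and the remark preceding this lemma) that each non-zero element lies in some ultrafilter so the $V_{s}$ are non-empty when $s \neq 0$. The only subtle point worth flagging explicitly is that when we invoke the filter property to produce $c \leq s, t$, this $c$ is automatically non-zero because $A$ is proper, which is exactly what we need in order for $V_{c}$ to be one of the basic sets in $\tau$.
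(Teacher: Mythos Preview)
Your proof is correct and follows essentially the same approach as the paper: verify the covering axiom by noting every ultrafilter contains some element, and verify the intersection axiom by using the filter property to find $c \leq s,t$ inside $A$ and then observing $V_{c} \subseteq V_{s} \cap V_{t}$ by upward closure. The paper's version is terser (it simply asserts $A \in V_{c} \subseteq V_{a}, V_{b}$ without spelling out the upward-closure step), but the argument is identical; your remark that $c \neq 0$ is harmless but not actually needed for $V_{c} \in \tau$, since $\tau = \{V_{s} : s \in S\}$ includes $V_{0} = \varnothing$ by definition.
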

\begin{proof} We refer the reader to \cite[Theorem 5.3]{Willard}
for the definition of a base of open sets.
Observe that $\mathsf{G}(S) = \bigcup_{a \in S} V_{a}$.
Let $A \in V_{a} \cap V_{b}$.
Then the ultrafilter $A$ contains $a$ and $b$ and so it contains an element $c \in A$ such that
$c \leq a,b$.
Thus $A \in V_{c} \subseteq V_{a}, V_{b}$.
\end{proof}

See \cite{Resende} for information on \'etale topological groupoids.

\begin{theorem}\label{them:etale-groupoid} Let $S$ be an inverse semigroup with zero.
Then $\mathsf{G}(S)$ is an \'etale topological groupoid when equipped with the topology with open base $\tau$.
\end{theorem}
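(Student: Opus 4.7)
The plan is to verify the three properties that make $\mathsf{G}(S)$ an \'etale topological groupoid with respect to the topology with base $\tau$: inversion and multiplication must be continuous, and the domain map $\mathbf{d}$ (hence the range map) must be a local homeomorphism. The economical route is to show that each basic open $V_{s}$ is an open bisection, since the $V_{s}$ cover $\mathsf{G}(S)$.

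Continuity of inversion is handled by the observation that $A \in V_{s}$ if and only if $A^{-1} \in V_{s^{-1}}$; so $A \mapsto A^{-1}$ carries basic open sets to basic open sets and, being involutive, is a homeomorphism. Continuity of multiplication goes as follows: for a composable pair $(A,B)$ with $A \cdot B \in V_{s}$, the definition $A \cdot B = (AB)^{\uparrow}$ supplies $a \in A$ and $b \in B$ with $ab \leq s$. Then the neighborhood $V_{a} \times V_{b}$ of $(A,B)$, intersected with the set of composable pairs, is sent into $V_{s}$: for any composable $(A',B')$ in it one has $ab \in A'B' \subseteq A' \cdot B'$, whence $s \in A' \cdot B'$.

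The crux is the local homeomorphism property of $\mathbf{d}$. I would first establish the reconstruction identity: if $s \in A$ then $A = (s \cdot \mathbf{d}(A))^{\uparrow}$. For the inclusion $\subseteq$, given $t \in A$ pick $c \in A$ with $c \leq s,t$; then $c = s \cdot c^{-1}c$ with $c^{-1}c \in \mathbf{d}(A)$, so $t \in (s \cdot \mathbf{d}(A))^{\uparrow}$. For the reverse inclusion, any $f \in \mathbf{d}(A)$ refines to some $a^{-1}b$ with $a,b \in A$, and using the filter property one finds $c \in A$ with $c \leq s,a,b$; a short calculation gives $c^{-1}c \leq a^{-1}b \leq f$, so $sf \geq s \cdot c^{-1}c = c \in A$, forcing $sf \in A$. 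This identity immediately gives injectivity of $\mathbf{d}|_{V_{s}}$; it is surjective onto the set of idempotent ultrafilters containing $s^{-1}s$ because any such $F$ lifts to $(sF)^{\uparrow} \in V_{s}$. Continuity of $\mathbf{d}$ is checked by verifying that the preimage of a basic open of the unit space is a union of basic opens of $\mathsf{G}(S)$, and openness follows from $\mathbf{d}(V_{t})$ being the set of idempotent ultrafilters containing $t^{-1}t$. Putting these together, $\mathbf{d}|_{V_{s}}$ is a continuous open bijection onto an open subset of the unit space, hence a homeomorphism, and the $V_{s}$ cover $\mathsf{G}(S)$.

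The main obstacle is the reconstruction identity $A = (s \cdot \mathbf{d}(A))^{\uparrow}$, and in particular the auxiliary fact that $c^{-1}c \leq a^{-1}b$ whenever $c \leq a,b$ in an ultrafilter; once this is secured, everything else is a formal unpacking of the base $\tau$ and the partial product on ultrafilters.
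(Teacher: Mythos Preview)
Your proposal is correct and follows essentially the same route as the paper: inversion continuity via $(V_{s})^{-1}=V_{s^{-1}}$, multiplication continuity via a neighborhood $V_{a}\times V_{b}$ with $ab\leq s$, and the \'etale property by showing $\mathbf{d}\colon V_{s}\to V_{s^{-1}s}$ is a homeomorphism through the reconstruction identity $A=(s\,\mathbf{d}(A))^{\uparrow}$. The only difference is one of explicitness: the paper defers multiplication continuity and the reconstruction identity to \cite{Lawson2010} (``properties of ultrafilters''), and derives continuity of $\mathbf{d}$ directly from continuity of inversion and multiplication rather than checking preimages of basic opens, whereas you unpack these steps in full.
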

\begin{proof} By Lemma~\ref{lem:groupoids}, we know that $\mathsf{G}(S)$ is a groupoid.
We adapt results to be found in \cite{Lawson2010} to this more general setting.
We prove first that $\mathsf{G}(S)$ is a topological groupoid.
Thus we have to prove that the inversion map and the multiplication map are both continuous maps.
The fact that the inversion map is continuous follows from the fact that $V_{a^{-1}} = (V_{a})^{-1}$.
Continuity of the multiplication map follows by the same argument as in the proof of step~3 of \cite[Proposition~2.22]{Lawson2010}.
It remains to show that it is \'etale.
We prove that the map $\mathbf{d} \colon V_{a} \rightarrow V_{a^{-1}a}$ is a homeomorphism.
The fact that this map is a bijection follows from the properties of ultrafilters.
It is injective because if $A$ and $B$ are two ultrafilters containing the element $a$ such that $\mathbf{d}(A) = \mathbf{d}(B)$
then $A = B$.
It is surjective because if $F \in  V_{a^{-1}a}$  then $A = (aF)^{\uparrow}$ is an ultrafilter containing $a$
such that $\mathbf{d}(A) = F$.
The map is continuous because inversion and multiplication are continuous.
It remains to show that this is an open map.
Let $V_{b} \subseteq V_{a}$.
Then it is routine to check that $V_{b^{-1}b} \subseteq V_{a^{-1}a}$;
alternatively, we can use the fact that $V_{a}V_{b} = V_{ab}$ which again follows by basic properties of ultrafilters.
\end{proof}

In Sections 2, 3 and 4 we revisit the material from \cite{Lawson2007, Lawson2007b} in the light of the recent generalizations to be found in \cite{LV2020a, AAM};
the main theorems we prove are Theorem~\ref{them:one}, Theorem~\ref{them:tight-completion} and Theorem~\ref{them:associated-groupoid}.
In Section~5, we branch out into a new direction by developing the theory of Cantor algebras from the perspective of semigroup theory;
the main theorems we prove are Theorem~\ref{them:higman-one}, Theorem~\ref{them:free}, Theorem~\ref{them:higman-two} and Theorem~\ref{them:higman-forms}.\\

\noindent
{\bf Acknowledgements }I am grateful to Phil Scott (Ottawa) for directing me to Statman's work and numerous Skype discussions.\\

\section{The Thompson groups via the polycyclic inverse monoids}

In this section, we shall construct the group associated with a polycyclic inverse monoid.

\subsection{The distributive inverse monoid $D_{n}$}

We begin with a general construction.
Let $S$ be an arbitrary inverse semigroup.
Define $\mathsf{C}(S)$ to be the set of all compatible order ideals of $S$.
Then under subset multiplication, $\mathsf{C}(S)$ is a pseudogroup and the map $\iota \colon S \rightarrow \mathsf{C}(S)$,
given by $s \mapsto s^{\downarrow}$, is a homomorphism universal for homomorphisms to the category of pseudogroups;
see \cite[Theorem~1.4.23]{Lawson1998}.
Now, define $\mathsf{D}(S)$ to be the finite elements of $\mathsf{C}(S)$ \cite[Section~3.2]{LL}.
These are the finitely generated compatible order ideals of $S$.
This is a distributive inverse semigroup.
Let $\iota \colon S \rightarrow \mathsf{D}(S)$ be the restriction of the above map.
This is universal to the category of distributive inverse semigroups \cite[Theorem 4.30]{LL}, \cite[Proposition 2.5]{Lawson2012}, \cite{LL2}, \cite{Lawson2020}.
Define $S$ to satisfy the {\em weak meet condition} if the intersection of any two principal order ideals
is finitely generated as an order ideal;
this notion is due to Steinberg \cite{Steinberg2010}.
Then $S$ satisfies the weak meet condition if and only if $\mathsf{D}(S)$ is a $\wedge$-semigroup.

Observe that each element of $\mathsf{D}(S)$ is a finite join of elements in $S$,
and that if $a \in \{a_{1}, \ldots, a_{m} \}^{\downarrow}$, where $a,a_{1},\ldots, a_{m} \in S$,
then $a \leq a_{i}$ for some $i$.
This observation leads to the following abstract description of the distributive completion $\mathsf{D}(S)$ of the inverse semigroup $S$.

\begin{theorem}\label{them:abstract-dist-compl} Let $S$ be an inverse semigroup with zero.
Suppose that $S$ is an inverse subsemigroup of a distributive inverse semigroup $T$ such that the following two properties are satisfied:
\begin{enumerate}
\item Each element of $T$ is a finite join of elements from $S$.
\item If $a \leq \bigvee_{i=1}^{m} a_{i}$, where $a,a_{i} \in S$, then $a \leq a_{i}$ for some $i$. 
\end{enumerate}
Then $T$ is the distributive completion of $S$.
\end{theorem}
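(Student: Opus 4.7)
The plan is to exploit the universal property of $\mathsf{D}(S)$ recalled just before the theorem. Since $T$ is a distributive inverse semigroup and the inclusion $\kappa \colon S \hookrightarrow T$ is a homomorphism of inverse semigroups with zero, the universality of $\iota \colon S \to \mathsf{D}(S)$ yields a unique morphism of distributive inverse semigroups $\phi \colon \mathsf{D}(S) \to T$ with $\phi \iota = \kappa$. Because every element of $\mathsf{D}(S)$ has the form $\{a_{1},\ldots,a_{m}\}^{\downarrow} = \bigvee_{i=1}^{m} \iota(a_{i})$ and $\phi$ preserves finite compatible joins, one has explicitly
$$
\phi\bigl(\{a_{1},\ldots,a_{m}\}^{\downarrow}\bigr) \;=\; \bigvee_{i=1}^{m} a_{i} \quad \text{in } T.
$$
The entire task is then to show this canonical $\phi$ is a bijection, for then it is automatically an isomorphism of distributive inverse semigroups.

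Surjectivity is the content of hypothesis (1). Given $t \in T$, write $t = \bigvee_{i=1}^{m} a_{i}$ with $a_{i} \in S$. For this join to exist in the inverse semigroup $T$, the generators $\{a_{1}, \ldots, a_{m}\}$ must be pairwise compatible (as noted in the discussion preceding Lemma~\ref{lem:compatible-meets}), so $\{a_{1}, \ldots, a_{m}\}^{\downarrow}$ is a genuine element of $\mathsf{D}(S)$, and it is mapped to $t$ by $\phi$.

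Injectivity is precisely what hypothesis (2) is designed for. Suppose $\phi(X) = \phi(Y)$ with $X = \{a_{1}, \ldots, a_{m}\}^{\downarrow}$ and $Y = \{b_{1}, \ldots, b_{n}\}^{\downarrow}$, so that $\bigvee_{i} a_{i} = \bigvee_{j} b_{j}$ in $T$. For any $a \in X$ we have $a \in S$ with $a \leq a_{i}$ for some $i$, and therefore $a \leq \bigvee_{j} b_{j}$; condition (2), applied to the elements $a, b_{1}, \ldots, b_{n}$ of $S$, forces $a \leq b_{j}$ for some $j$, i.e.\ $a \in Y$. Hence $X \subseteq Y$, and by symmetry $X = Y$.

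I do not anticipate any serious obstacle: the argument is essentially a verification that the two hypotheses split the burden cleanly, (1) giving surjectivity and (2) giving injectivity. The only subtlety worth flagging is the internal-consistency check used in the surjectivity step, namely that a finite join of elements of $S$ existing in $T$ automatically produces a compatible subset of $S$; without this observation one could not exhibit a preimage in $\mathsf{D}(S)$ in the first place.
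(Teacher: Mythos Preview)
Your argument is correct, but it takes a different route from the paper's own proof. The paper verifies the universal property of the distributive completion \emph{directly} for $T$: given any homomorphism $\alpha \colon S \to D$ to a distributive inverse monoid, it defines $\beta \colon T \to D$ by $\beta(\bigvee_{i} s_{i}) = \bigvee_{i} \alpha(s_{i})$, uses condition~(2) to check well-definedness, and observes uniqueness. By contrast, you invoke the already-established universal property of $\mathsf{D}(S)$ to produce the comparison morphism $\phi \colon \mathsf{D}(S) \to T$ and then show that $\phi$ is a bijection, with hypotheses~(1) and~(2) supplying surjectivity and injectivity respectively. Your approach makes explicit the role of the concrete model $\mathsf{D}(S)$ and cleanly isolates how each hypothesis contributes; the paper's approach is more self-contained in that it never needs to manipulate finitely generated compatible order ideals, only joins in $T$. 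The underlying combinatorics (condition~(2) forcing $a \leq b_{j}$ for some $j$) is identical in both.
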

\begin{proof} Denote the embedding of $S$ in $T$ by $\iota$.
Let $\alpha \colon S \rightarrow D$ be any monoid homomorphism
to a distributive inverse monoid $D$.
We shall define a morphism $\beta \colon T \rightarrow D$.
Each element of $t \in T$ can be written as a join $t = \bigvee_{i=1}^{m} s_{i}$
of elements of $S$.
It follows that $\{s_{i} \colon 1 \leq i \leq m \}$ is a compatible subset of $S$.
Thus $\{\alpha (s_{i}) \colon 1 \leq i \leq m \}$ is a compatible subset of $D$.
We may therefore define
$$\beta \left(  \bigvee_{i=1}^{m} s_{i}   \right) = \bigvee_{i=1}^{m} \alpha (s_{i}).$$
However, we have to show that this is well-defined.
Suppose that
$$\bigvee_{i=1}^{p} a_{i} = \bigvee_{j=1}^{q} b_{j}$$
in $T$.
Then, for each $i$, we have that $a_{i} \leq \bigvee_{j=1}^{q} b_{j}$.
Thus for each $i$ there exists a $j$ such that  $a_{i} \leq b_{j}$
which is an inequality in $S$.
Thus $\alpha (a_{i}) \leq \alpha (b_{j})$.
It is now clear (also by symmetry)
that the function $\beta$ is well-defined.
By construction,  the map $\beta$ is a morphism of distributive inverse monoids
and it is clearly the unique such morphism satisfying $\beta \iota = \alpha$.
\end{proof}

Denote by $D_{n}$ the set of all bijective morphisms between the finitely generated right ideals of the free monoid $A_{n}^{\ast}$.

\begin{lemma}\label{lem:dist-inv}
The inverse monoid $D_{n}$ is a distributive inverse $\wedge$-monoid.
\end{lemma}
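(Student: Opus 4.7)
My plan is to verify directly, from the functional description, that $D_n$ is an inverse monoid, that compatible joins exist and multiplication distributes over them, and that binary meets exist. Throughout I would exploit two recurring facts about free monoids: Lemma~\ref{lem:colson}, which gives a prefix-code generating set for any finitely generated right ideal, and the singly aligned property, which says the intersection of two principal right ideals is itself principal or empty.

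The first step is closure of $D_n$ under composition and inversion. Inverses are immediate. For composition, given $\theta \colon R_1 \to R_2$ and $\phi \colon R_3 \to R_4$ in $D_n$, the composite is a bijective morphism whose domain and range are the images of $R_2 \cap R_3$ under $\phi^{-1}$ and $\theta$ respectively, so I need (i) intersections of finitely generated right ideals to be finitely generated and (ii) bijective morphisms in $D_n$ to preserve finite generation. Point (i) follows by distributing intersections across finite unions and applying the singly aligned property. Point (ii) follows from the key structural observation that if $\theta$ has domain $XA_n^{\ast}$ with $X = \{x_1,\ldots,x_m\}$ a prefix code, then $\theta|_{x_i A_n^{\ast}} = y_i x_i^{-1}$ where $y_i := \theta(x_i)$, so $\theta$ sends each generating principal to a principal. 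Bijectivity of $\theta$ forces $\{y_1,\ldots,y_m\}$ to be a prefix code generating the range, and by Lemma~\ref{lem:orthogonality} this realises $\theta$ as an orthogonal join $\bigvee_i y_i x_i^{-1}$ of elements of $P_n$.

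For distributivity, two elements of $D_n$ are compatible in the inverse-semigroup sense precisely when they agree as partial functions where both are defined, and in that case the set-theoretic union of their graphs is a bijective morphism whose domain and range are finite unions of finitely generated right ideals, hence finitely generated; this union is the join in $D_n$. Distributivity $\psi(\theta \vee \phi) = \psi\theta \vee \psi\phi$ then follows from the pointwise nature of partial function composition. For meets, a direct case analysis in $P_n$ using the natural partial order (which satisfies $xy^{-1} \leq uv^{-1}$ iff $(x,y) = (u,v)p$ for some $p$) shows that the meet of any two elements of $P_n$ always exists: it equals one of them when they are comparable in the partial order and $0$ otherwise. Iterating Lemma~\ref{lem:meets-joins} across the orthogonal decompositions of $\theta$ and $\phi$ then expresses $\theta \wedge \phi$ as a finite compatible join of such pairwise meets, placing it inside $D_n$ by the previous paragraph.

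The main technical point is the bookkeeping around finite generation: verifying that intersections of finitely generated right ideals are finitely generated and, dually, that the ``agreement set'' of two bijective morphisms is itself a finitely generated right ideal. Both reduce to the singly aligned structure of free monoids together with Lemma~\ref{lem:colson}, but some care is needed to keep track of prefix-code generators through these constructions; this is where the reduction to $P_n$ via the orthogonal decomposition pays off, since each of the required facts is transparent for the single-generator pieces $yx^{-1}$.
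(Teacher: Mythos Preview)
Your treatment of closure under composition, inversion, and compatible joins matches the paper's argument closely. The real divergence is in the $\wedge$-monoid part. The paper does not decompose into $P_{n}$-pieces at all; instead it appeals to Leech's criterion \cite{Leech}. Given $\theta \colon XA_{n}^{\ast} \to YA_{n}^{\ast}$ with $X$ a prefix code, the paper computes the fixed-point set $\{z : \theta(z) = z\}$: writing $z = xu$ with $x \in X$ and cancelling gives $\theta(x) = x$, so the fixed-point set is exactly $X'A_{n}^{\ast}$ where $X' = \{x \in X : \theta(x) = x\}$. The identity on this finitely generated right ideal is then the largest idempotent below $\theta$, and Leech's theorem delivers all binary meets. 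This is shorter and sidesteps any bookkeeping over orthogonal decompositions.

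Your route --- assemble $\theta \wedge \phi$ from the pairwise meets $a_{i} \wedge b_{j}$ of the $P_{n}$-components --- is sound in spirit, but the appeal to Lemma~\ref{lem:meets-joins} is circular: that lemma has $b \wedge \bigl(\bigvee_{i} a_{i}\bigr)$ as a \emph{hypothesis}, not a conclusion, so it cannot be used to establish existence of the meet you are after. What you actually need is the converse direction. Having shown that each $a_{i} \wedge b_{j}$ exists in $P_{n}$ and that these are pairwise orthogonal, form $\psi := \bigvee_{i,j}(a_{i} \wedge b_{j})$ in $D_{n}$ and verify directly that $\psi$ is the greatest lower bound of $\theta$ and $\phi$. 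This amounts to checking that $\psi$ coincides with the set-theoretic intersection $\theta \cap \phi$ of the graphs, which is immediate once you recall that the natural partial order on $D_{n}$ is graph containment. Your closing paragraph (the ``agreement set'' of two bijective morphisms being a finitely generated right ideal) is exactly this observation, so the ingredients are all present; only the citation of Lemma~\ref{lem:meets-joins} should be replaced by this direct verification.
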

\begin{proof} Because free monoids are singly aligned, the intersection of two finitely generated right ideals
is a finitely generated right ideal.
If $\theta \colon XA_{n}^{\ast} \rightarrow YA_{n}^{\ast}$ is a bijective morphism
and $ZA_{n}^{\ast}$ is a finitely generated right ideal in $XA_{n}^{\ast}$ then
$\theta (ZA_{n}^{\ast})$ is a finitely generated right ideal in $YA_{n}^{\ast}$.
Clearly, $A_{n}^{\ast}$ is a finitely generated right ideal.
It follows that $D_{n}$ is an inverse monoid. 
The union of finitely generated right ideals is a finitely generated right ideal.
It is now easy to show that $D_{n}$ is a distributive inverse monoid.
It remains to show that it is a $\wedge$-monoid.
We shall use \cite{Leech}.
Let $\theta \colon XA_{n}^{\ast} \rightarrow YA_{n}^{\ast}$ be a bijective morphism.
Observe first that $\theta (X)A_{n}^{\ast} = YA_{n}^{\ast}$.
We can therefore assume in what follows that $Y = \theta (X)$.
We are interested in the elements $z \in XA_{n}^{\ast}$ such that $\theta (z) = z$.
By assumption, $z = xu$ where $x \in X$ and $u \in A_{n}^{\ast}$.
Thus $\theta (x)u = xu$.
By cancellation, we deduce that $\theta (x) = x$.
Let $X'$ be the set of all $x \in X$ such that $\theta (x) = x$.
Then the fixed point subset of $\theta$ is the set $X'A_{n}^{\ast}$.
The identity function on this set is therefore the largest idempotent in $D_{n}$
less than or equal to $\theta$.
It follows by \cite{Leech} that $D_{n}$ has all binary meets.
\end{proof}

Clearly, there is an embedding $\iota \colon P_{n} \rightarrow D_{n}$.
The way that $P_{n}$ sits inside $D_{n}$ will be crucial in what follows. 
The proofs of the following are straightforward.

\begin{proposition}\label{prop:skyr} In the distributive inverse monoid $D_{n}$ the following conditions hold:
\begin{enumerate}
\item Each element of $D_{n}$ is a finite join of elements from $P_{n}$.
\item If $a \leq \bigvee_{i=1}^{m} a_{i}$, where $a,a_{i} \in P_{n}$, then $a \leq a_{i}$ for some $i$. 
\item If $a \leq b$, where $a,b \in P_{n}$ and $a$ is a non-zero idempotent, then
$b$ is an idempotent. This is simply the fact that $P_{n}$ is $E^{\ast}$-unitary.
\end{enumerate}
\end{proposition}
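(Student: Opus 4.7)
\textbf{Proof proposal for Proposition~\ref{prop:skyr}.}

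The plan is to handle the three parts in increasing order of work. Part (3) is essentially a quotation: the paper has already observed that $P_n$ is $E^\ast$-unitary (using the characterization $xy^{-1} \leq uv^{-1}$ iff $(x,y) = (u,v)p$; if $xx^{-1} \leq yz^{-1}$ then $x = yp = zp$, whence $y = z$ by right cancellation in the free monoid, so $yz^{-1}$ is itself an idempotent). Thus part (3) follows immediately.

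For part (1), let $\theta \colon XA_n^\ast \to YA_n^\ast$ be an arbitrary element of $D_n$. By Lemma~\ref{lem:colson}, I may replace $X$ by a prefix code $X' = \{x_1,\ldots,x_m\}$ generating the same right ideal; thus $XA_n^\ast = \bigsqcup_{i=1}^m x_i A_n^\ast$. Put $y_i = \theta(x_i)$. Since $\theta$ is a morphism, it carries $x_i u$ to $y_i u$, so $\theta$ restricted to $x_i A_n^\ast$ is exactly the element $y_i x_i^{-1} \in P_n$. Because the $x_i$ form a prefix code, the idempotents $x_i x_i^{-1}$ are pairwise orthogonal by Lemma~\ref{lem:orthogonality}; the $y_i$ must also form a prefix code (as $\theta$ is bijective, so the $y_i A_n^\ast$ are disjoint), so the $y_i x_i^{-1}$ are pairwise orthogonal and their join in $D_n$ is just the set-theoretic union of partial functions, which is $\theta$.

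For part (2), assume $a \leq \bigvee_{i=1}^m a_i$ with $a, a_i \in P_n$ and $a \neq 0$ (the zero case is trivial). Write $a = xy^{-1}$ and $a_i = x_i y_i^{-1}$. Taking $\mathbf{d}$ of both sides and using that $\mathbf{d}$ preserves joins in a distributive inverse semigroup, I get $yy^{-1} \leq \bigvee_i y_i y_i^{-1}$; viewing these idempotents as right ideals, this says $yA_n^\ast \subseteq \bigcup_i y_i A_n^\ast$. Hence $y = y_j p$ for some index $j$ and string $p$. Now, $\bigvee_i a_i$, viewed as a partial function in $D_n$, restricts to $a_j$ on $y_j A_n^\ast$; since $a$ also agrees with this join on its domain, I have $xu = a(yu) = a_j(y_j p u) = x_j p u$ for every $u$, so $x = x_j p$. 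Then $(x,y) = (x_j, y_j) p$, which is exactly the condition $a \leq a_j$ in $P_n$.

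The main obstacle is part (2): one has to translate the hypothesis (an order relation inside $D_n$ involving a join) into a prefix condition on the generating strings. Once one commits to tracking the domain-element $y$ of $a$ and exploiting the fact that joins in $D_n$ act pointwise on right ideals, the argument reduces to right cancellation in $A_n^\ast$, so no genuine difficulty remains.
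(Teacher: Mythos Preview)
Your proposal is correct and fills in exactly the details the paper omits; the paper simply declares the proofs ``straightforward'' without writing them out, and your arguments (decomposing $\theta$ over a prefix code for part (1), tracking the domain element $y$ through the join for part (2), and invoking right cancellation for the $E^{\ast}$-unitary property in part (3)) are the natural ones. There is nothing to compare against beyond this.
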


By Theorem~\ref{them:abstract-dist-compl} and Proposition~\ref{prop:skyr},
the embedding of $P_{n}$ in $D_{n}$ has a universal property
which thereby puts the paper \cite{Lawson2007} into its proper context.

\begin{theorem}[Distributive completion]\label{them:dist-completion}The distributive inverse monoid $D_{n}$ is 
the distributive completion of the inverse monoid $P_{n}$.
\end{theorem}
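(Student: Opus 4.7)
The plan is essentially to recognise that the two ingredients needed for the theorem have already been assembled: the abstract characterisation in Theorem~\ref{them:abstract-dist-compl} and the structural facts about how $P_n$ sits inside $D_n$ collected in Proposition~\ref{prop:skyr}. So the proof is a short invocation rather than a new construction.

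First, I would note the setup: Lemma~\ref{lem:dist-inv} tells us $D_n$ is a distributive inverse $\wedge$-monoid, and the embedding $\iota \colon P_n \to D_n$ sending a bijective morphism between principal right ideals to itself (viewed among all finitely generated right-ideal morphisms) realises $P_n$ as an inverse submonoid of $D_n$. Thus the hypotheses of Theorem~\ref{them:abstract-dist-compl} that require an ambient distributive inverse semigroup with an inverse subsemigroup are in place.

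Next, I would verify the two remaining hypotheses of Theorem~\ref{them:abstract-dist-compl} by direct citation: hypothesis~(1), that every element of $D_n$ is a finite join of elements of $P_n$, is exactly clause~(1) of Proposition~\ref{prop:skyr}; hypothesis~(2), that $a \leq \bigvee_{i=1}^m a_i$ with $a, a_i \in P_n$ forces $a \leq a_i$ for some $i$, is exactly clause~(2) of Proposition~\ref{prop:skyr}. (Clause~(3) of the proposition is not needed here, but it explains why these joins behave so rigidly.)

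The conclusion then follows immediately from Theorem~\ref{them:abstract-dist-compl}: $D_n$ is the distributive completion of $P_n$. There is no real obstacle; the work is packaged in Theorem~\ref{them:abstract-dist-compl} (where the well-definedness of the induced morphism $\beta \colon D_n \to D$ on a distributive inverse monoid $D$ was the actual content) and in Lemma~\ref{lem:dist-inv} and Proposition~\ref{prop:skyr} (where the distributive, singly aligned structure of free monoids was used). The present theorem is simply the clean statement obtained by combining them, so the proof need only record the two citations.
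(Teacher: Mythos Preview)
Your proposal is correct and matches the paper's own argument exactly: the paper states, just before the theorem, that the result follows from Theorem~\ref{them:abstract-dist-compl} and Proposition~\ref{prop:skyr}, with Lemma~\ref{lem:dist-inv} supplying that $D_n$ is distributive. There is nothing to add.
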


The above theorem is important because it shows that $D_{n}$ is {\em algebraically} natural.

\begin{remark}\label{rem:normal-form}
{\em We can get a normal form for elements of $D_{n}$.
Let $\theta \colon XA_{n}^{\ast} \rightarrow YA_{n}^{\ast}$ be a bijective morphism
where $X$ and $Y$ are finite sets.
We can choose $X$ to be a prefix code by Lemma~\ref{lem:colson}.
Observe that $\theta (X)A_{n}^{\ast} = YA_{n}^{\ast}$
and so $Y$ can also be chosen to be a prefix code.
Suppose that $X$ and $Y$ are prefix codes and $\theta (X) = Y$.
Order the sets $X$ and $Y$ so that $\theta (x) = y$;
we shall make this ordering assumption throughout this paper and extend it to unions as well
(for convenience).
Then we obtain what we shall term a {\em symbol} $\left( \frac{X}{Y} \right)$.
All symbols $\left( \frac{\varnothing}{Y} \right)$ represent the empty partial bijection.
Observe that a symbol uniquely determines a bijective morphism between finitely generated right ideals since if $X$ and $X'$ are prefix codes we have that
$XA_{n}^{\ast} = X'A_{n}^{\ast}$ if and only if $X = X'$.}
\end{remark}

\begin{remark}{\em 
The above theorem implies the work of \cite{Lawson2007} as we now show.
Let $\theta \colon XA_{n}^{\ast} \rightarrow YA_{n}^{\ast}$ be a bijective morphism where $\theta (X) = Y$.
We can write $\theta = \bigvee_{i=1}^{m} y_{i}x_{i}^{-1}$ where $X = \{x_{1}, \ldots, x_{m}\}$ and  $Y = \{y_{1}, \ldots, y_{m}\}$
and where we can assume, without loss of generality, that $X$ and $Y$ are prefix codes by Remark~\ref{rem:normal-form}.
However, the set  $\{x,y\}$ is prefix incomparable if and only if the set $\{xx^{-1}, yy^{-1}\}$ is orthogonal by Lemma~\ref{lem:orthogonality}.
It follows that $\theta$ is an orthogonal join of elements in $P_{n}$.}
\end{remark}

The following result will be useful to us later when we come to consider the \'etale groupoid associated with $D_{n}$.
Recall that $\mathsf{G}(S)$ is the \'etale groupoid of ulrafilters of $S$ by Theorem~\ref{them:etale-groupoid}.

\begin{theorem}\label{them:isomorphism-of-etale-groupoids} 
Let $S$ be an inverse semigroup and let $T$ be its distributive completion.
Then $\mathsf{G}(S) \cong \mathsf{G}(T)$, an isomorphism of \'etale groupoids.
\end{theorem}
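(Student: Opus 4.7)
The plan is to construct an explicit bijection at the level of ultrafilters and then check it is both a functor of groupoids and a homeomorphism. I would define $\Phi \colon \mathsf{G}(S) \to \mathsf{G}(T)$ by sending an ultrafilter $A$ of $S$ to its upward closure $A^{\uparrow}$ taken inside $T$, and $\Psi \colon \mathsf{G}(T) \to \mathsf{G}(S)$ by $B \mapsto B \cap S$. That $\Phi(A)$ is a proper filter of $T$ is immediate: directedness, upward closure, and the absence of $0$ all follow formally from the corresponding properties of $A$. Similarly $B \cap S$ is obviously upward closed in $S$ and avoids $0$; the only non-trivial part at this stage is directedness of $B \cap S$.

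This is where I would use the two structural facts about the distributive completion. Given $a_1, a_2 \in B \cap S$, pick $t \in B$ with $t \leq a_1, a_2$, write $t = \bigvee_i s_i$ with $s_i \in S$ using Theorem~\ref{them:abstract-dist-compl}(1), then invoke primeness of $B$ (by Lemma~\ref{lem:filter-inclusions}, every ultrafilter of $T$ is tight, hence prime since $T$ is distributive) to extract some $s_i \in B \cap S$ with $s_i \leq a_1, a_2$, the inequality transferring from $T$ to $S$ because the natural partial order on $S$ agrees with the one inherited from $T$. The same primeness-plus-decomposition pattern yields $\Phi \circ \Psi = \mathrm{id}$, while $\Psi \circ \Phi = \mathrm{id}$ is essentially the observation that order-equivalent elements of $S$ inside $A^{\uparrow}$ lie in $A$. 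Maximality of $\Phi(A)$ then follows cheaply: extend $\Phi(A)$ to an ultrafilter $F$ of $T$ by Lemma~\ref{lem:zorn}; then $\Psi(F)$ is an ultrafilter of $S$ containing $A$, hence equal to $A$, and applying $\Phi$ back gives $F = \Phi(A)$.

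With the bijection established, the remaining checks are short. I would verify $\mathbf{d}(\Phi(A)) = \Phi(\mathbf{d}(A))$ and $\mathbf{r}(\Phi(A)) = \Phi(\mathbf{r}(A))$ by noting that both sides equal the upward closure in $T$ of $\{a^{-1}a \colon a \in A\}$ respectively $\{aa^{-1} \colon a \in A\}$; then the inclusion $AB \subseteq \Phi(A)\Phi(B)$, combined with the fact that both sides are ultrafilters of $T$, forces $\Phi(A \cdot B) = \Phi(A) \cdot \Phi(B)$. For the topology, $\Phi$ takes the basic open $V_s$ of $\mathsf{G}(S)$ bijectively onto $\{B \in \mathsf{G}(T) \colon s \in B\}$ for each $s \in S$; moreover, any $T$-level basic open $V_t$ with $t = \bigvee_i s_i$ equals $\bigcup_i V_{s_i}$ by primeness, so $\{V_s \colon s \in S\}$ is already a base for the topology of $\mathsf{G}(T)$, and $\Phi$ is a homeomorphism base-to-base.

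The real content, and the only step that actually requires work, is the systematic use of the prime-filter mechanism to descend information about elements of $T$ to elements of $S$ through their join-decompositions. Once this device is in place, every remaining property of $\Phi$ and $\Psi$ reduces to routine ultrafilter bookkeeping.
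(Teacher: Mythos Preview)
Your proposal is correct and follows essentially the same approach as the paper: define the bijection via $A \mapsto A^{\uparrow}$ and $B \mapsto B \cap S$, use the join-decomposition property of Theorem~\ref{them:abstract-dist-compl} together with primeness of ultrafilters (Lemma~\ref{lem:filter-inclusions}) to descend from $T$ to $S$, and then observe that $V_{a \vee b} = V_a \cup V_b$ so the $S$-indexed sets already form a base for the topology on $\mathsf{G}(T)$. Your write-up is in fact more detailed than the paper's, which leaves most of these checks as ``routine''; in particular your argument for the maximality of $\Phi(A)$ via Lemma~\ref{lem:zorn} and the inverse map is a clean way to handle a point the paper simply asserts.
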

\begin{proof} We use Theorem~\ref{them:abstract-dist-compl}
and the fact that every ultrafilter is a prime filter by Lemma~\ref{lem:filter-inclusions}.
We first establish a bijection between 
$\mathsf{G}(S)$ and  $\mathsf{G}(T)$ as follows.
If $A$ is an ultrafilter in $S$ then $(A)^{\uparrow}$ is an ultrafilter in $T$;
if $A$ is an ultrafilter in $T$ then $A \cap S$ is an ultrafilter in $S$.
Observe that these operations are mutually inverse.
It is routine to check that this bijection establishes an isomorphism of groupoids.
As to the topologies, observe that in a distributive inverse semigroup such as $T$ we have that
$V_{a \vee b} = V_{a} \cup V_{b}$ since ultrafilters are prime filters.
Thus the topology in $T$ is really determined by those open sets of the form $V_{a}$ where $a \in S$.
\end{proof}

\subsection{The group associated with a polycyclic inverse monoid}

A non-zero idempotent $e$ in an inverse semigroup is said to be {\em essential}
if for every non-zero idempotent $f$ we have that $ef \neq 0$.

\begin{lemma}\label{lem:essential-idempotents}
Let $S$ be an inverse monoid.
Then $e$ is an essential idempotent if and only if $e \leq_{e} 1$.
\end{lemma}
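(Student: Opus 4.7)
The plan is to unwind both definitions and show they coincide, using two preliminary observations about inverse monoids.

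First, I would observe that in any inverse monoid $S$ the elements below the identity are precisely the idempotents: if $x \leq 1$ then $x = 1 \cdot x^{-1}x = x^{-1}x \in \mathsf{E}(S)$, and conversely for any $e \in \mathsf{E}(S)$ we have $e = e \cdot e^{-1}e$, giving $e \leq 1$. Second, I would note that any two idempotents $e,f$ are compatible, since $e^{-1}f = ef$ and $ef^{-1} = ef$ are both idempotents (idempotents commute in an inverse semigroup). Lemma~\ref{lem:compatible-meets} then yields $e \wedge f = ef^{-1}f = ef$, so the meet of two idempotents is simply their product.

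With these in hand, the equivalence is a direct translation. By definition, $e \leq_{e} 1$ means that $\{e\}$ is a tight cover of $1$: for every $x$ with $0 < x \leq 1$ one has $x \wedge e \neq 0$. Using the first observation, this condition reads: for every non-zero idempotent $x$, $x \wedge e \neq 0$. Using the second observation, $x \wedge e = xe$, so the condition becomes: for every non-zero idempotent $x$, $xe \neq 0$. This is literally the definition of $e$ being an essential idempotent.

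There is no serious obstacle; the whole argument is definitional. The only point requiring mild care is that the tight-cover definition quantifies over arbitrary $x$ with $0 < x \leq 1$ rather than over idempotents, which is why the first observation (that $x \leq 1$ forces $x$ to be idempotent) is needed before invoking Lemma~\ref{lem:compatible-meets} to reduce meets to products.
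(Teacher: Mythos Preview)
Your proof is correct and follows essentially the same route as the paper's: both hinge on the fact that the elements below $1$ are exactly the idempotents, so the tight-cover condition for $e$ below $1$ reduces to the essential-idempotent condition. You are somewhat more explicit than the paper in justifying $x \wedge e = xe$ via compatibility and Lemma~\ref{lem:compatible-meets}, whereas the paper leaves that step implicit.
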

\begin{proof} Suppose first that $e$ is an essential idempotent.
Since the idempotents form an order-ideal in an inverse semigroup with maximum element $1$ in the monoid case,
we may write $f \leq 1$ precisely when $f$ is an idempotent.
It follows that if $f$ is non-zero then $fe \leq 0$.
Thus $e \leq_{e} 1$.
The proof of the converse is now similar.
\end{proof}

Let $S$ be an inverse semigroup.
We say that $a \in S$ is {\em essential} if both idempotents $\mathbf{d}(a)$ and $\mathbf{r}(a)$ are essential.
Denote by $S^{e}$ the set of all essential elements of $S$.
Then $S^{e}$ is an inverse subsemigroup of $S$ and, crucially, it will not contain the zero element.
See \cite[Lemma~2.8]{Lawson2007b}.

We now look at the inverse semigroup $D_{n}^{e}$ consisting of all elements of $D_{n}$
whose domains and ranges are essential idempotents.
We can obtain a different description of the elements of $D_{n}^{e}$.
The following was proved as \cite[Lemma~A.1]{Birget}.

\begin{lemma}\label{lem:stolz} 
Every essential finitely generated right ideal of $A_{n}^{\ast}$ is generated by a finite maximal prefix code (and, conversely).
\end{lemma}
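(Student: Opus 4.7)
The plan is to unpack what ``essential'' means for a finitely generated right ideal in this setting, and then show it is exactly the condition of being generated by a maximal prefix code. By Lemma~\ref{lem:colson}, any finitely generated right ideal of $A_{n}^{\ast}$ has the form $XA_{n}^{\ast}$ for some finite prefix code $X$, so without loss of generality we may represent right ideals in this way. The identity function on $XA_{n}^{\ast}$ is the corresponding idempotent of $D_{n}$, and essentiality (by Lemma~\ref{lem:essential-idempotents}) amounts to saying this idempotent has non-zero meet with every non-zero idempotent of $D_{n}$, i.e.\ with every non-empty finitely generated right ideal.

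First I would reduce the test: since any non-empty finitely generated right ideal contains a principal right ideal $yA_{n}^{\ast}$, essentiality of $XA_{n}^{\ast}$ is equivalent to the condition that $XA_{n}^{\ast} \cap yA_{n}^{\ast} \neq \varnothing$ for every $y \in A_{n}^{\ast}$. Using the singly-aligned property of free monoids recalled earlier in the introduction, $XA_{n}^{\ast} \cap yA_{n}^{\ast} \neq \varnothing$ if and only if there is some $x \in X$ with $x$ and $y$ prefix-comparable.

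The forward direction is then immediate: if $XA_{n}^{\ast}$ is essential and $X$ is a prefix code, then every $y \in A_{n}^{\ast}$ is prefix-comparable to some element of $X$, so $X$ is a finite maximal prefix code by definition. Conversely, if $X$ is a finite maximal prefix code, then by definition every $y$ is prefix-comparable with some $x \in X$, so for any non-empty $yA_{n}^{\ast}$ we have $XA_{n}^{\ast} \cap yA_{n}^{\ast} \neq \varnothing$, and hence $XA_{n}^{\ast}$ is essential.

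There is no real obstacle here; the argument is a translation between the combinatorics of prefix-comparability and the order-theoretic notion of essentiality in $D_{n}$. The only subtle point worth stating cleanly is the reduction from ``meets every non-zero idempotent'' to ``meets every principal right ideal,'' which is where the singly-aligned property quietly does the work.
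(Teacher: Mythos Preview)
Your argument is correct. The paper does not actually give a proof of this lemma; it simply cites \cite[Lemma~A.1]{Birget}. Your self-contained argument---reduce to a prefix-code generator via Lemma~\ref{lem:colson}, reduce the essentiality test from arbitrary finitely generated right ideals to principal ones, and then invoke the singly-aligned property to translate ``meets every $yA_{n}^{\ast}$'' into ``every $y$ is prefix-comparable with some $x\in X$''---is exactly the natural route and fills in what the paper leaves to the reference.

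One small cosmetic point: your appeal to Lemma~\ref{lem:essential-idempotents} is not really needed, since the statement ``non-zero meet with every non-zero idempotent'' is the \emph{definition} of an essential idempotent rather than the content of that lemma (which instead rephrases essentiality as $e\leq_{e}1$). Dropping that citation, or replacing it with a direct reference to the definition, would make the write-up cleaner without changing the mathematics.
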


It follows that the elements of $D_{n}^{e}$ are the bijective morphisms between the essential finitely generated right ideals
--- therefore, by the above lemma, between the right ideals generated by the finite maximal prefix codes.
The following is the substance of \cite[Proposition 2.1]{Birget}.

\begin{proposition}\label{prop:finverse}
The inverse monoid $D_{n}^{e}$ is $F$-inverse.
\end{proposition}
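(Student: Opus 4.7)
The plan is to interpret elements of $D_n^e$ as ``prefix-replacement'' homeomorphisms of the Cantor space $A_n^\omega$, characterize the $\sigma$-classes as the fibres of this interpretation, and exhibit the maximum of each $\sigma$-class as the unique \emph{reduced} representative.

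First, using Lemma~\ref{lem:stolz} and Remark~\ref{rem:normal-form}, I would write an element $\theta \in D_n^e$ as a symbol $\left( \frac{X}{Y} \right)$ where $X,Y$ are finite maximal prefix codes with bijection $x_i \mapsto y_i$. Since $XA_n^\omega = YA_n^\omega = A_n^\omega$ by Proposition~\ref{prop:mpc}, $\theta$ extends uniquely to a homeomorphism $\widehat{\theta}$ of $A_n^\omega$ sending $x_i \xi$ to $y_i \xi$.

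Next, I would show that $\theta_1 \, \sigma \, \theta_2$ in $D_n^e$ if and only if $\widehat{\theta}_1 = \widehat{\theta}_2$. If $\phi \leq \theta_1,\theta_2$ is a common lower bound in $D_n^e$, then $\phi$ has a representative $\left( \frac{Z}{W} \right)$ with $Z$ an MPC that refines both of the MPCs of $\theta_1$ and $\theta_2$, so the induced homeomorphisms agree on $ZA_n^\omega = A_n^\omega$. Conversely, if $\widehat\theta_1 = \widehat\theta_2$, the MPC of least common extensions produces a common lower bound lying in $D_n^e$. As a by-product, $\sigma$-equivalent elements of $D_n^e$ are compatible, because $\theta_1^{-1}\theta_2$ is then an identity partial map on its domain; hence joins within a $\sigma$-class exist in $D_n$ by Lemma~\ref{lem:dist-inv} and, having essential domain and range, remain in $D_n^e$.

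I would then construct a maximum by a contraction procedure. Say $\theta = \left(\frac{X}{Y}\right)$ admits a \emph{basic contraction} at $x \in A_n^*$ if $\{xa : a \in A_n\} \subseteq X$ and there exists $y \in A_n^*$ such that $\theta(xa) = ya$ for all $a \in A_n$; replacing these $n$-blocks by $x$ and $y$ yields a $\sigma$-equivalent symbol which, because $ya(xa)^{-1} \leq yx^{-1}$ in $P_n$ for each $a$, strictly dominates $\theta$ in the natural partial order. Each contraction strictly decreases $|X|$, so iteration terminates at a \emph{reduced} representative that is an upper bound of $\theta$ in its $\sigma$-class.

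The crux --- and the main obstacle --- is the uniqueness of the reduced representative. Given two reduced representatives $\left(\frac{X_1}{Y_1}\right)$ and $\left(\frac{X_2}{Y_2}\right)$ of the same homeomorphism, I would reason tree-theoretically: fix $x_1 \in X_1$ and suppose $x_1 \notin X_2$. By the MPC property, either some $x_2 \in X_2$ is a proper prefix of $x_1$ or $x_1$ is a proper prefix of a non-trivial MPC $W = \{w : x_1 w \in X_2\}$; in the second case, agreement of the induced homeomorphisms forces $\pi_2(x_1 w) = \pi_1(x_1) w$ for each $w \in W$, and any internal node $u$ with $\{ua : a \in A_n\} \subseteq W$ then witnesses a basic contraction in $X_2$, contradicting reducedness. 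The first case is symmetric. Hence $X_1 = X_2$, the labelling is determined by the homeomorphism, and $Y_1 = Y_2$ with the same pairing. Combined with the existence half, this gives the $F$-inverse property.
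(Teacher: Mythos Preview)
Your argument is correct and self-contained. The paper itself does not supply a proof of this proposition: it simply records that the result is ``the substance of \cite[Proposition~2.1]{Birget}'' and moves on. What you have written is essentially the classical reduced tree-pair argument that underlies the normal form for Thompson group elements, and it is sound. A couple of points are worth making explicit for a polished write-up. In the contraction step you should note that $X'A_n^{\ast}$ strictly contains $XA_n^{\ast}$ (the string $x$ lies in the former but not the latter), which is what gives $\theta < \theta'$; and in the uniqueness step your appeal to the existence of a caret inside the nontrivial maximal prefix code $W$ is exactly Theorem~\ref{them:max-pc} (take a leaf of maximal length), so you may cite that directly. The digression on compatibility and joins is correct but unnecessary for the conclusion, since the reduction procedure already exhibits the maximum of each $\sigma$-class.
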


Recall that $F$-inverse semigroups are also $E$-unitary.
We can now define the groups that interest us;
these are precisely the famous Thompson groups.\\

\noindent
{\bf Definition. }Let $n \geq 2$.
Define the group $G_{n,1}$ to be the group $D_{n}^{e}/\sigma$.
When $n = 2$ this is the Thompson group $V$.\\

\section{The structure of prefix codes}

The material in this section is nothing more than a reworking of classical theory.
We describe the structure of the maximal prefix codes in Theorem~\ref{them:max-pc}
and an important relation between prefix codes in Proposition~\ref{prop:cummings}.

The following is well-known but we give a proof anyway.

\begin{lemma}\label{lem:prefix-codes} Let $X_{1}, \ldots, X_{n}$ be prefix codes in the free monoid $A_{n}^{\ast}$.
Then $X = a_{1}X_{1} \cup \ldots \cup a_{n}X_{n}$ is a prefix code.
\end{lemma}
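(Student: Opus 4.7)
The plan is a direct, two-case verification that any two distinct elements of $X$ are prefix incomparable, relying only on cancellation in the free monoid $A_n^{\ast}$ and the hypothesis that each $X_i$ is itself a prefix code. Here $a_1,\ldots,a_n$ are the letters of the alphabet $A_n$, so the key combinatorial input is simply that distinct letters cannot be prefixes of each other.

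First, I would pick two distinct elements $u,v \in X$ and split according to whether they come from the same piece of the union or from different pieces. In the \emph{different pieces} case, say $u \in a_i X_i$ and $v \in a_j X_j$ with $i \neq j$, we can write $u = a_i x$ and $v = a_j y$. Since $a_i \neq a_j$ and both are letters (length one), neither string can be a prefix of the other: any prefix relationship between $u$ and $v$ would force the initial letters to coincide.

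In the \emph{same piece} case, we have $u = a_i x$ and $v = a_i y$ with $x, y \in X_i$ and, since $u \neq v$, necessarily $x \neq y$. If $u$ were a prefix of $v$, we could write $v = u w = a_i x w$ for some string $w$, while also $v = a_i y$; cancelling the letter $a_i$ in the free monoid gives $y = xw$, i.e.\ $x$ is a prefix of $y$. By symmetry, if $v$ were a prefix of $u$ then $y$ would be a prefix of $x$. Either alternative contradicts the assumption that $X_i$ is a prefix code.

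Combining the two cases shows that no two distinct elements of $X$ are prefix comparable, so $X$ is a prefix code. There is no real obstacle here: the proof is a routine application of the definitions together with left-cancellation in the free monoid, and requires no appeal to any of the earlier machinery on inverse semigroups, ultrafilters, or tight covers.
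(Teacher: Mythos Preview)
Your proof is correct and follows essentially the same approach as the paper: a two-case split according to whether the initial letters agree, using left cancellation in the free monoid together with the prefix-code hypothesis on each $X_i$. The paper's argument is slightly terser but logically identical.
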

\begin{proof} Without loss of generality, we shall assume that our prefix codes are non-empty.
 For each $i$, suppose that $a_{i}x$ and $a_{i}y$ are prefix comparable where $x,y \in X_{i}$.
Then $a_{i}xu = a_{i}yv$ where $u$ and $v$ are some finite strings.
By left cancellation, we have that $x$ and $y$ are prefix comparable and so must be equal since $X_{i}$ is a prefix code.
Clearly, strings that begin with different letters cannot be prefix comparable.
It follows that $X$ is a prefix code.
\end{proof}

\begin{lemma}\label{lem:quotients-pc} Let $X$ be any prefix code.
Then, for each $i$, the set $a_{i}^{-1}X$ is a prefix code.
\end{lemma}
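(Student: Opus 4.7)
The plan is to unwind the definition of $a_{i}^{-1}X$ and reduce everything to left cancellation in the free monoid, which is immediate.

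First I would fix $i$ and take two distinct elements $y, y' \in a_{i}^{-1}X$. By the defining condition of $a_{i}^{-1}X$ given in the introduction, this means precisely that $a_{i}y \in X$ and $a_{i}y' \in X$. The goal is to show that $y$ and $y'$ are prefix incomparable.

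Next I would argue by contradiction (or equivalently by cases). Suppose $y$ is a prefix of $y'$, so $y' = yu$ for some $u \in A_{n}^{\ast}$. Then $a_{i}y' = a_{i}yu$, which exhibits $a_{i}y$ as a prefix of $a_{i}y'$. Since both $a_{i}y$ and $a_{i}y'$ lie in the prefix code $X$, this forces $a_{i}y = a_{i}y'$. Left cancellation in the free monoid $A_{n}^{\ast}$ then yields $y = y'$, contradicting our choice. The symmetric case (where $y'$ is a prefix of $y$) is identical.

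The main obstacle is essentially non-existent: the statement is a direct consequence of left cancellation, and the proof is a one-line check. The only thing worth flagging is the (empty) edge case $a_{i}^{-1}X = \varnothing$, which is trivially a prefix code by the convention noted before Lemma~\ref{lem:colson}. So I expect the write-up to be no more than a few lines.
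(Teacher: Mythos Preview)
Your proposal is correct and is essentially the same argument as the paper's: both lift prefix comparability in $a_{i}^{-1}X$ to prefix comparability in $X$ by prepending $a_{i}$, apply the prefix-code hypothesis on $X$, and finish with left cancellation. The only cosmetic difference is that the paper phrases it via ``$xu = yv$'' for prefix comparability rather than splitting into the two directional cases.
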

\begin{proof}
Let $x,y \in a_{i}^{-1}X$ be prefix comparable.
Then $xu = yv$ for some finite strings $u$ and $v$.
But $a_{i}x, a_{i}y \in X$.
However, $a_{i}xu = a_{i}yv$ which implies that $a_{i}x = a_{i}y$.
Thus by left cancellation, we have that $x = y$, as required.
\end{proof}

We call the process in part (1) below {\em a caret expansion}
and that in part (2), we call  {\em a caret reduction};
the proofs are routine.

\begin{lemma}\label{lem:carets} Let $X$ be a prefix code where $X \neq \varnothing$.
\begin{enumerate}
\item Let $x \in X$ be any element.
Then $X' = X \setminus \{ x\} \cup \{xa_{1}, \ldots, xa_{n} \}$ is also a prefix code.
If $X$ is a maximal prefix code then so too is $X'$.

\item Let $x \in X$ be such that $xA_{n} \subseteq X$.
Then $X'' = X \setminus xA_{n} \cup \{x\}$ is also a prefix code.
If $X$ is a maximal prefix code then so too is $X''$.
\end{enumerate}
\end{lemma}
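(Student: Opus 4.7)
The plan is to handle each statement in turn, with the prefix-code property first and maximality second, the latter via the clopen-set characterisation in Proposition~\ref{prop:mpc}.

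For part (1), suppose $y,z \in X'$ are distinct and prefix comparable. I split into three cases: both lie in $X\setminus\{x\}$ (impossible since $X$ is a prefix code); both are among $\{xa_{1},\ldots,xa_{n}\}$, in which case $y=xa_{i}$ and $z=xa_{j}$ with $i\neq j$, so by left cancellation $a_{i}$ and $a_{j}$ would have to be prefix comparable, which is false; and the mixed case $y \in X\setminus\{x\}$ and $z=xa_{i}$, where prefix comparability of $y$ and $xa_{i}$ forces $y$ and $x$ to be prefix comparable (since $x$ is a prefix of $xa_{i}$), contradicting $X$ being a prefix code as $y\neq x$. For maximality, I use Proposition~\ref{prop:mpc}: the observation is that $\bigcup_{i=1}^{n} xa_{i} A_{n}^{\omega} = xA_{n}^{\omega}$, whence $X'A_{n}^{\omega} = (X\setminus\{x\})A_{n}^{\omega} \cup xA_{n}^{\omega} = XA_{n}^{\omega}$, and maximality of $X$ transfers to $X'$.

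For part (2), I first check the prefix-code property of $X''$. Inside $X\setminus xA_{n}$ there is nothing to check. For the new element $x$ against some $y \in X\setminus xA_{n}$: if $y$ and $x$ were prefix comparable, then either $y$ is a proper prefix of $x$, in which case $y$ is also a proper prefix of each $xa_{i}\in X$, contradicting $X$ being a prefix code; or $x$ is a prefix of $y$, so $y = xa_{i}u$ for some $i$ and some $u \in A_{n}^{\ast}$, which means $xa_{i}$ is a prefix of $y$, and since $y \notin xA_{n}$ gives $y \neq xa_{i}$, we again contradict the prefix-code property of $X$. Maximality is handled by the same identity $\bigcup_{i=1}^{n} xa_{i}A_{n}^{\omega} = xA_{n}^{\omega}$ applied in reverse, which shows $X''A_{n}^{\omega} = XA_{n}^{\omega}$.

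No step looks hard; the only minor wrinkle is the mixed case in part (1) and the second sub-case in part (2), where one has to notice that prefix comparability with a child $xa_{i}$ pushes the comparison back to a prefix comparison with $x$ itself, which is what triggers the contradiction with the prefix-code hypothesis on $X$. Everything else is bookkeeping on finite strings plus a single appeal to Proposition~\ref{prop:mpc} for each maximality claim.
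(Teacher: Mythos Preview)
Your argument is correct. The paper itself does not give a proof of this lemma, declaring the proofs ``routine,'' so there is nothing to compare against; your case analysis for the prefix-code property is clean, and invoking Proposition~\ref{prop:mpc} for the maximality claims (via the identity $\bigcup_{i} xa_{i}A_{n}^{\omega} = xA_{n}^{\omega}$) is a perfectly good route --- one could equally argue directly from the definition of maximality, but your way is just as short. One tiny point you leave implicit in part~(2): when $x$ is a prefix of $y$ you pass immediately to $y = xa_{i}u$, tacitly using $y \neq x$; this is fine because $x \notin X$ (since $xa_{1} \in X$ and $X$ is a prefix code), so every $y \in X \setminus xA_{n}$ is automatically distinct from $x$.
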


We can use caret expansions and reductions to construct all maximal prefix codes.
The following is well-known but we give a proof for completeness.

\begin{theorem}\label{them:max-pc} Fix the alphabet $A_{n}$.
Then each maximal prefix code is either the trivial prefix code or is obtained from the trivial prefix code
by a finite sequence of caret expansions.
\end{theorem}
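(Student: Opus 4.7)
The plan is to prove the theorem by induction on $|X|$, using caret reduction (Lemma~\ref{lem:carets}(2)) as the operation inverting a caret expansion. The base case is $X = \{\varepsilon\}$, which is trivially obtained. This is in fact the only maximal prefix code of cardinality one, since any singleton $\{x\}$ with $|x| \geq 1$ is not prefix-comparable with any letter $a_{j} \in A_{n}$ whose first symbol differs from that of $x$ (recall $n \geq 2$).

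For the inductive step, given a maximal prefix code $X \neq \{\varepsilon\}$, I would aim to produce $x \in A_{n}^{\ast}$ with $x \notin X$ and $xA_{n} \subseteq X$. Lemma~\ref{lem:carets}(2) then yields a maximal prefix code $X'' = (X \setminus xA_{n}) \cup \{x\}$ of cardinality $|X| - n + 1 < |X|$. The inductive hypothesis decomposes $X''$ as a (possibly empty) sequence of caret expansions starting from $\{\varepsilon\}$, and a final caret expansion at $x$ then recovers $X$.

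The main obstacle is producing this $x$. I would take $x$ to be a proper prefix of some element of $X$ of \emph{maximum} length; such an $x$ exists because $X \neq \{\varepsilon\}$ forces some element of $X$ to have positive length. Then $x \notin X$, for otherwise $x$ and its strict extension in $X$ would both lie in $X$ and be prefix-comparable, violating the prefix-code property. To see that $xa_{i} \in X$ for every $i$, apply Proposition~\ref{prop:mpc} to the infinite string $\xi = xa_{i}a_{1}a_{1}\cdots$ to obtain $w \in X$ with $w$ a prefix of $\xi$. If $|w| \leq |x|$, then $w$ is a prefix of $x$ hence a proper prefix of the element of $X$ of which $x$ is a proper prefix, contradicting the prefix-code property of $X$; thus $|w| > |x|$, and since $w$ is a prefix of $\xi$ this forces $w$ to begin with $xa_{i}$. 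If $|w| > |xa_{i}|$, then $xa_{i}$ would be a proper prefix of an element of $X$, contradicting the maximality of $|x|$ among such proper prefixes. Hence $w = xa_{i}$, so $xa_{i} \in X$, completing the induction.
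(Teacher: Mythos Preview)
Your proof is correct and follows essentially the same strategy as the paper: locate a string $x$ with $xA_{n}\subseteq X$, apply a caret reduction, and iterate (or, equivalently, induct on $|X|$). The paper phrases this by taking a longest element of $X$ and stripping its last letter, while you take a maximum-length proper prefix of an element of $X$; these produce the same string. The only notable difference is that you invoke Proposition~\ref{prop:mpc} (via the infinite string $\xi$) to witness comparability, whereas the paper argues directly from the finite-string definition of a maximal prefix code; your formulation has the minor advantage of handling the case where all elements of $X$ have length~$1$ uniformly, which the paper treats separately.
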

\begin{proof} Let $X$ be a non-trivial maximal prefix code.
We shall prove first that a sequence of caret reductions will transform it to the trivial maximal prefix code.
To kick off, we shall prove that there is an $x \in X$ such that $XA_{n} \subseteq X$.
We prove first that if every element of $X$ has length $1$ then $X = A_{n}$.
Let $a \in A_{n}$ be arbitrary.
Then since $X$ is a maximal prefix code, we have that $au = bv$ for some finite strings $u$ and $v$ and $b \in X$.
But then $a = b$, since they both have length $1$.
We have proved that $a \in X$ and so $X = A_{n}$.
Now we can deal with the general case.
We assume in what follows that $X$ contains strings of length $2$ or more.
Let $x$ be a string of maximal length in $X$;
by assumption, it has length at least 2.
Let $x = x'a$ where $a$ is a letter and $x'$ has length one less than that of $x$.
We claim that $x'A_{n} \subseteq X$.
Let $b \in A_{n}$ be arbitrary.
We shall prove that $x'b \in X$.
Since $X$ is a maximal prefix code we have that $x'bu = yv$ where $y \in X$ and $u$ and $v$ are finite strings.
It follows that $x'b$ and $y$ are prefix comparable.
The length of $y$ is at most the length of $x'b$ since $x'a$ has maximal length.
If they have the same length then $x'b = y$ and we have proved that $x'b \in X$.
Suppose that $y$ has length strictly less than $x'b$.
If it has length equal to $x'$ then $x' = y$ but then $y$ is a proper prefix of $x$ which contradicts the fact that $X$ is a prefix code.
If $y$ has a length strictly less than that of $x'$ then $y$ is a proper prefix of $x'$ and so a proper prefix of $x$.
Again, this contradicts the fact that $X$ is a prefix code.
It follows that $x'b \in X$ for all $b \in A_{n}$.
We have therefore proved that $xA_{n} \subseteq X$.
Thus $X' = X \setminus xA_{n} \cup \{ x\}$ is a maximal prefix code.

If $X'$ is trivial then we are done, otherwise we now repeat the above process.
At some point, we shall arrive at the trivial maximal prefix code.
Thus $X$ can be reduced to the trivial maximal prefix code by a sequence of caret reductions.
By reversing these operations, that is by carrying out a sequence of caret extensions, 
it follows that the maximal prefix code $X$ is obtained from the trivial maximal prefix code by a sequence of caret expansions.
\end{proof}

We say that a maximal prefix code $X$ in $A_{n}^{\ast}$ is {\em uniform} if all elements have the same length.
We say it has {\em height  $r$} if the length of the longest branch from leaf to root is $r$.

\begin{lemma}\label{lem:trump} 
Any two uniform maximal prefix codes of the same height over the alphabet $A_{n}$ are equal.
\end{lemma}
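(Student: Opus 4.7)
The plan is to show that any uniform maximal prefix code of height $r$ over $A_n$ equals $A_n^r$, the set of all strings of length exactly $r$; the lemma then follows immediately since both codes equal this same set.

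First I would unpack the hypotheses. A uniform maximal prefix code $X$ of height $r$ has all its elements of length $r$, so $X \subseteq A_n^r$. The opposite inclusion is where the work lies. Let $w \in A_n^r$ be arbitrary. Since $X$ is a maximal prefix code, by definition $w$ is prefix comparable with some $x \in X$, meaning either $w$ is a prefix of $x$ or $x$ is a prefix of $w$. But $|w| = r = |x|$ by uniformity, and two prefix-comparable strings of the same length must coincide. Hence $w = x \in X$, giving $A_n^r \subseteq X$.

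Combining both inclusions yields $X = A_n^r$. Since this set depends only on $n$ and $r$, any other uniform maximal prefix code $Y$ of the same height is likewise equal to $A_n^r$, and so $X = Y$.

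There is no real obstacle here; the only mild subtlety is remembering that ``prefix comparable'' is the symmetric relation defined earlier in the paper (so $w$ could be a prefix of $x$ or vice versa), but the length constraint collapses either case to equality. The argument is essentially a one-line observation dressed up by the definitions, so I would keep the proof to a few sentences rather than invoke Theorem~\ref{them:max-pc} or induction on caret expansions.
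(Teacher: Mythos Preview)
Your proof is correct and follows essentially the same idea as the paper's: use maximality to get prefix comparability with a given string, then use equality of lengths to force equality of strings. The only cosmetic difference is that you show each such code equals the explicit set $A_{n}^{r}$, whereas the paper compares two codes $X$ and $Y$ directly (taking $x \in X$, finding a prefix-comparable $y \in Y$, and concluding $x = y$); the underlying mechanism is identical.
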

\begin{proof} Let $X$ and $Y$ be two uniform maximal prefix codes of height $r$.
Let $x \in X$.
Then $x$ has length $n^{r}$.
There exists $y \in Y$ such that $xu = yv$ for some finite strings $u$ and $v$.
But $y$ also has length $n^{r}$.
It follows that $x = y$ and so $x \in Y$.
By symmetry, we get that $X = Y$.
\end{proof}

The notion of tightness is intimately bound up with the structure of prefix codes, as we now show.
Because of Proposition~\ref{prop:skyr}, we can apply the following lemma to $D_{n}$.

\begin{lemma}\label{lem:tight-covers} Let $S$ be a distributive inverse monoid that contains  an inverse submonoid $T$ such that the following conditions hold:
\begin{enumerate}
\item Each element of $S$ is a finite join of elements from $T$.
\item If $a \leq \bigvee_{i=1}^{m} a_{i}$ where $a,a_{i} \in T$ then $a \leq a_{i}$ for some $i$. 
\item If $a \leq b$, where $a,b \in T$ and $a$ is a non-zero idempotent, then
$b$ is an idempotent.
\end{enumerate}
Then 
$\bigvee_{i=1}^{s} e_{i} \leq_{e} \bigvee_{j=1}^{t} f_{j}$, where $e_{i}, f_{j} \in T$, if and only if
$\bigvee_{i=1}^{s} e_{i} f_{j} \leq_{e} f_{j}$ for each $j$.
\end{lemma}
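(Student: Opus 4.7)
Setting $e := \bigvee_{i=1}^{s} e_{i}$ and $f := \bigvee_{j=1}^{t} f_{j}$, the first move is to apply distributivity of multiplication over joins in the distributive inverse semigroup $S$ to rewrite $\bigvee_{i} e_{i} f_{j}$ as $e f_{j}$, so the right-hand side of the equivalence becomes simply \emph{$e f_{j} \leq_{e} f_{j}$ for each $j$}. Because the notation $e_{i}, f_{j}$ is idempotent notation and the statement uses products where meets are expected, all of $e, f, f_{j}$ and $e f_{j}$, together with any witness $g$ below $f$, are idempotents; meets among them reduce to products and any two of them commute. The inequality $e \leq f$ that is built into the $\leq_{e}$ notation on the left-hand side is of course automatic on the right, since $e f_{j} \leq f_{j}$ for any idempotent $f_{j}$.

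For the forward direction I would assume $e \leq_{e} f$ and fix $j$. Taking any non-zero idempotent $g \leq f_{j}$, the inclusion $g \leq f$ together with essentiality forces $g \wedge e = g e \neq 0$. Using $g f_{j} = g$ and commutativity of idempotents, a one-line calculation gives $g \wedge (e f_{j}) = g e f_{j} = e g f_{j} = e g$, which is non-zero. This is exactly what $e f_{j} \leq_{e} f_{j}$ requires.

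For the reverse direction I would assume $e f_{j} \leq_{e} f_{j}$ for every $j$ and take a non-zero idempotent $g \leq f$. Lemma~\ref{lem:meets-joins} decomposes $g = g \wedge f = \bigvee_{j} g f_{j}$, so $g' := g f_{j}$ is non-zero for some $j$, and clearly $0 < g' \leq f_{j}$. Essentiality of $e f_{j}$ in $f_{j}$ then produces $g' \wedge (e f_{j}) \neq 0$, and rerunning the commuting-idempotent calculation identifies this meet with $e g'$; since $g' \leq g$ this forces $e \wedge g \neq 0$, finishing the proof of $e \leq_{e} f$.

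The hypotheses (1)--(3) of the lemma enter only indirectly: (1) is what guarantees that $e$ and $f$ are bona fide elements of $S$, while (2) and (3) are not actively used in the symmetric computation above — presumably they appear because the lemma is intended to be invoked inside the standing framework of $P_{n} \subseteq D_{n}$. I do not anticipate any genuine obstacle; the whole proof is two applications of the identity $g \wedge (e f_{j}) = e \wedge g$ valid for $g \leq f_{j}$, framed by distributivity of multiplication and meet over joins.
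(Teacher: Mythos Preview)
The paper states this lemma without proof, so there is nothing to compare your argument against directly. Your reading of the $e_{i},f_{j}$ as idempotents is the intended one (this is how the lemma is invoked in Proposition~\ref{prop:cummings} and Theorem~\ref{them:tight-completion}), and with that reading both directions go through exactly as you describe: the forward direction is the one-line identity $g\wedge(ef_{j})=eg$ for $g\leq f_{j}$, and the reverse direction uses Lemma~\ref{lem:meets-joins} to split a non-zero $g\leq f$ as $\bigvee_{j}gf_{j}$ and then runs the same identity on a non-zero summand. You are also right that hypotheses (2) and (3) are not used in the calculation; they are present because the lemma is stated in the standing $T\subseteq S$ framework rather than because the argument needs them.

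One small gap: your remark that ``$e\leq f$ \dots\ is of course automatic on the right'' covers only the forward direction. In the reverse direction you never verify $e\leq f$, and in fact $ef_{j}\leq_{e}f_{j}$ for all $j$ does not force it: take $e$ strictly larger than $f$ in any Boolean algebra, so that $ef_{j}=f_{j}\leq_{e}f_{j}$ trivially while $e\not\leq f$. The lemma is evidently meant to be read under the standing hypothesis $e\leq f$ (every application in the paper starts from $e\leq_{e}f$ and uses only the forward implication), so this is a defect of the statement rather than of your method; but as written your reverse implication needs that hypothesis added.
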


The proof of the following easily follows from the definitions.

\begin{lemma}\label{lem:mpc} In the inverse monoid $D_{n}$,
the orthogonal set $\{x_{1}x_{1}^{-1}, \ldots, x_{m}x_{m}^{-1} \}$ is a tight cover of $\varepsilon \varepsilon^{-1}$ if and only if
$\{x_{1}, \ldots, x_{m} \}$ is a maximal prefix code.
\end{lemma}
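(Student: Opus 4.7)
The plan is to prove both directions by translating between prefix relations in $A_{n}^{\ast}$ and meets of idempotents in $D_{n}$, using the fact that $\varepsilon\varepsilon^{-1}$ is the identity $1$ of $D_{n}$ and that an element $x \leq 1$ must be an idempotent.

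For the forward direction, I assume $\{x_{1},\ldots,x_{m}\}$ is a maximal prefix code and take an arbitrary non-zero $y$ with $y \leq 1$ in $D_{n}$. Such a $y$ is necessarily a non-zero idempotent, so it is the identity morphism on some non-empty finitely generated right ideal $R \subseteq A_{n}^{\ast}$. Choose any $w \in R$; by maximality of the prefix code there exists $i$ such that $w$ and $x_{i}$ are prefix comparable, so $wA_{n}^{\ast} \cap x_{i}A_{n}^{\ast}$ is non-empty. This intersection lies inside $R$, and its identity morphism is a non-zero idempotent below both $y$ and $x_{i}x_{i}^{-1}$. Since $y$ and $x_{i}x_{i}^{-1}$ are both below $1$, they are compatible, so Lemma~\ref{lem:compatible-meets} ensures the meet $y \wedge x_{i}x_{i}^{-1}$ exists and is non-zero. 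Thus $\{x_{i}x_{i}^{-1}\}$ is a tight cover of $1$.

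For the converse, suppose $\{x_{1}x_{1}^{-1},\ldots,x_{m}x_{m}^{-1}\}$ is a tight cover of $1$. By Lemma~\ref{lem:orthogonality} the orthogonality hypothesis already gives that $\{x_{1},\ldots,x_{m}\}$ is a prefix code, so what remains is maximality. Let $w \in A_{n}^{\ast}$ be arbitrary; then $ww^{-1}$ is a non-zero idempotent below $1$, so there exists $i$ with $ww^{-1} \wedge x_{i}x_{i}^{-1} \neq 0$. By Lemma~\ref{lem:compatible-meets} this meet is $ww^{-1} x_{i}x_{i}^{-1}$, which, viewed as an identity morphism, is the identity on $wA_{n}^{\ast} \cap x_{i}A_{n}^{\ast}$. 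Non-vanishing of the meet therefore forces this intersection to be non-empty, which is precisely the condition that $w$ and $x_{i}$ be prefix comparable. Since $w$ was arbitrary, $\{x_{1},\ldots,x_{m}\}$ is a maximal prefix code.

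There is no real obstacle here: the only nuance is the careful identification of non-zero idempotents below $\varepsilon\varepsilon^{-1}$ in $D_{n}$ with non-empty finitely generated right ideals of $A_{n}^{\ast}$, together with the translation (via Lemmas~\ref{lem:orthogonality} and~\ref{lem:compatible-meets}) between ``meets of principal idempotents are non-zero'' and ``the corresponding right ideals intersect non-trivially'', i.e.\ prefix comparability. Once these translations are in place, both directions follow immediately from the definition of a maximal prefix code.
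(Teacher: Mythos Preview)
Your argument is correct and is exactly the unpacking-of-definitions argument the paper has in mind; the paper itself gives no explicit proof, merely noting that the result ``easily follows from the definitions.'' Two cosmetic points: you have the direction labels swapped relative to the statement as written (``tight cover $\Leftrightarrow$ maximal prefix code''), and the line ``Thus $\{x_{i}x_{i}^{-1}\}$ is a tight cover of $1$'' should read $\{x_{1}x_{1}^{-1},\ldots,x_{m}x_{m}^{-1}\}$.
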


The following lemma is key.
The proof is routine. 

\begin{lemma}\label{lem:corona} In the inverse monoid $D_{n}$,
let $\{x_{1}x_{1}^{-1}, \ldots, x_{m}x_{m}^{-1}\} \subseteq (xx^{-1})^{\downarrow}$ 
where $x_{i} = xp_{i}$.
Then this is a tight cover if and only if 
 $\{p_{1}p_{1}^{-1}, \ldots, p_{m}p_{m}^{-1}\} \subseteq (1)^{\downarrow}$ 
is a tight cover.
\end{lemma}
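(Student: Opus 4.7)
The plan is to exploit the natural bijection between nonzero idempotents of $D_{n}$ below $xx^{-1}$ and nonzero idempotents below $1$, given on principal idempotents by $xq(xq)^{-1} \longleftrightarrow qq^{-1}$ and extended across finite joins.

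First I would record the key computation. For arbitrary strings $y,z \in A_{n}^{\ast}$, left cancellation in the free monoid yields $xyA_{n}^{\ast} \cap xzA_{n}^{\ast} = x(yA_{n}^{\ast} \cap zA_{n}^{\ast})$, so that under the correspondence between principal idempotents of $D_{n}$ and principal right ideals of $A_{n}^{\ast}$,
$$xy(xy)^{-1} \wedge xz(xz)^{-1} \neq 0 \quad \Longleftrightarrow \quad yy^{-1} \wedge zz^{-1} \neq 0,$$
both statements being equivalent to $y$ and $z$ being prefix comparable.

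Next, by Proposition~\ref{prop:skyr}(1) every nonzero idempotent $e \leq xx^{-1}$ in $D_{n}$ is a finite join $e = \bigvee_{j} y_{j} y_{j}^{-1}$ of principal idempotents, and the constraint $y_{j}y_{j}^{-1} \leq xx^{-1}$ forces each $y_{j}$ to have $x$ as a prefix, say $y_{j} = xq_{j}$. Writing $\widetilde{e} := \bigvee_{j} q_{j} q_{j}^{-1}$, the key computation shows that $\widetilde{e}$ is nonzero and that the assignment $e \mapsto \widetilde{e}$ bijects nonzero idempotents of $D_{n}$ below $xx^{-1}$ with nonzero idempotents below $1$. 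Note also that, since $xx^{-1}$ and $1$ are idempotents, any element below them is itself an idempotent, so testing the tight cover condition amounts to testing it on idempotents only.

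For the forward direction, assume $\{x_{1}x_{1}^{-1},\ldots, x_{m}x_{m}^{-1}\}$ is a tight cover of $xx^{-1}$ and take a nonzero idempotent $e \leq 1$ with decomposition $e = \bigvee_{j} q_{j}q_{j}^{-1}$. Lifting to the nonzero $e^{\sharp} := \bigvee_{j} xq_{j}(xq_{j})^{-1} \leq xx^{-1}$, the tight-cover hypothesis together with Lemma~\ref{lem:meets-joins} produces indices $i,j$ with $xq_{j}(xq_{j})^{-1} \wedge x_{i}x_{i}^{-1} \neq 0$. Since $x_{i} = xp_{i}$, the key computation translates this to $q_{j}q_{j}^{-1} \wedge p_{i}p_{i}^{-1} \neq 0$, and monotonicity of meets gives $e \wedge p_{i}p_{i}^{-1} \neq 0$, as required. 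The converse is symmetric: given a nonzero idempotent $f \leq xx^{-1}$, decompose $f = \bigvee_{j} xq_{j}(xq_{j})^{-1}$, set $\widetilde{f} := \bigvee_{j} q_{j}q_{j}^{-1} \leq 1$, apply the hypothesised tight cover of $1$, and transport the witnessing meet back upstairs via the key computation.

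The only real obstacle is the bookkeeping needed to pass between an idempotent, its decomposition into principal idempotents, and its counterpart under the lifting/lowering bijection. All nontriviality collapses into the key computation (pure left cancellation in $A_{n}^{\ast}$) together with distributivity of meets over joins via Lemma~\ref{lem:meets-joins}, which confirms the author's assertion that the proof is routine.
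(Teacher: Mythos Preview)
Your proof is correct and is precisely the routine argument the paper has in mind: the paper gives no proof beyond the assertion ``The proof is routine,'' and the natural bijection $xq(xq)^{-1} \leftrightarrow qq^{-1}$ induced by left cancellation in $A_{n}^{\ast}$, together with the observation that it suffices to test the tight-cover condition on principal idempotents, is exactly how one fills in the details.
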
 

\begin{remark}\label{rem:lagavulin}
{\em By Lemma~\ref{lem:corona}, tight covers of arbitrary idempotents in $P_{n}$
are determined by tight covers of the identity $1$.
By Lemma~\ref{lem:tate}, we can assume that we are dealing with an orthogonal set
and by Lemma~\ref{lem:stolz}, this means that we are dealing with an underlying maximal prefix code.
By Theorem~\ref{them:max-pc}, the maximal prefix codes are determined by caret expansions.
This all means that the tight covers of idempotents $xx^{-1}$ may ultimately be traced back to the
maximal prefix code $A_{n}$.
Observe that $\{a_{1}a_{1}^{-1}, \ldots, a_{n}a_{n}^{-1}\}$ is a tight cover of the identity
and the prototype of all tight covers of idempotents.
In fact, by Lemma~\ref{lem:tight-covers}, it is the prototype for all tight covers in $P_{n}$.
These observations will be placed into their proper mathematical setting in Theorem~\ref{them:tight-completion}.}
\end{remark}

We may now prove the following result.

\begin{proposition}\label{prop:cummings}
We work in the inverse monoid $D_{n}$.
Let $X$ and $Y$ be prefix codes.
We write $1_{X}$ for the identity function defined on the set $XA_{n}^{\ast}$ and $1_{Y}$ for the identity function defined on $YA_{n}^{\ast}$.
Then $1_{X} \leq_{e} 1_{Y}$ if and only if the prefix code $X$ is obtained from the prefix code $Y$ by a series of caret expansions.
\end{proposition}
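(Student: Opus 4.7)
The plan is to translate the condition $1_{X} \leq_{e} 1_{Y}$ into a purely combinatorial condition on the underlying prefix codes and then invoke Theorem~\ref{them:max-pc} to recognise this as precisely the caret-expansion relation.

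Setting up: the inequality $1_{X} \leq 1_{Y}$ (implied by $1_{X} \leq_{e} 1_{Y}$) is equivalent to $XA_{n}^{\ast} \subseteq YA_{n}^{\ast}$, so every $x \in X$ has a unique prefix in $Y$; write $x = y_{x}p_{x}$ with $y_{x} \in Y$. For each $y \in Y$, set $Z_{y} = \{p_{x} \colon x \in X, \; y_{x} = y\}$. Then $X = \bigcup_{y \in Y} yZ_{y}$ as a disjoint union, each $Z_{y}$ is a prefix code (by left cancellation from $X$), and by Theorem~\ref{them:max-pc} the assertion that $X$ is obtained from $Y$ by a finite sequence of caret expansions amounts precisely to the assertion that each $Z_{y}$ is a maximal prefix code (with $Z_{y} = \{\varepsilon\}$ permitted, corresponding to no expansion at $y$). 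The proposition therefore reduces to showing that $1_{X} \leq_{e} 1_{Y}$ if and only if every $Z_{y}$ is a maximal prefix code.

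For the forward direction, assume $1_{X} \leq_{e} 1_{Y}$, fix $y \in Y$ and an arbitrary finite string $q$, and aim to find $p \in Z_{y}$ prefix-comparable with $q$. The idempotent $yq(yq)^{-1}$ is non-zero and lies below $yy^{-1} \leq 1_{Y}$, so by the tight-cover condition $yq(yq)^{-1} \wedge 1_{X} \neq 0$, and hence some $x \in X$ is prefix-comparable with $yq$. A brief case analysis (either $yq$ is a prefix of $x$, so both $y$ and $y_{x}$ are prefixes of $x$; or $x$ is a prefix of $yq$, so both $y$ and $y_{x}$ are prefixes of $yq$) combined with the fact that $Y$ is a prefix code forces $y_{x} = y$. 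Then $p_{x} \in Z_{y}$, and prefix-comparability of $x = yp_{x}$ with $yq$ gives prefix-comparability of $p_{x}$ with $q$, as required.

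For the converse, assume each $Z_{y}$ is a maximal prefix code. To verify $1_{X} \leq_{e} 1_{Y}$, let $e$ be a non-zero idempotent with $e \leq 1_{Y}$, realise $e$ as the identity on $WA_{n}^{\ast}$ for some non-empty finite $W \subseteq YA_{n}^{\ast}$, and pick $w \in W$, say $w = yp$ with $y \in Y$. By maximality of $Z_{y}$ some $z \in Z_{y}$ is prefix-comparable with $p$, so $yz \in X$ is prefix-comparable with $w$, producing a non-zero idempotent below both $e$ and $1_{X}$. The only delicate step is the case analysis identifying $y_{x}$ with $y$ in the forward direction, and that ultimately reduces to the elementary fact that two elements of a prefix code which are both prefixes of a common string must coincide; the rest is bookkeeping via Theorem~\ref{them:max-pc}.
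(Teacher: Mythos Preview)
Your proof is correct and follows the same overall strategy as the paper: reduce the relation $1_{X} \leq_{e} 1_{Y}$ to the statement that for each $y \in Y$ the set $Z_{y}$ of ``relative'' strings is a maximal prefix code, and then invoke Theorem~\ref{them:max-pc}.

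The execution differs in style. The paper routes the argument through the abstract machinery already set up: Lemma~\ref{lem:tight-covers} decomposes the essential inequality $\bigvee_{i} x_{i}x_{i}^{-1} \leq_{e} \bigvee_{j} y_{j}y_{j}^{-1}$ into an essential inequality at each $y_{j}$, Lemma~\ref{lem:corona} translates that to a tight cover of the identity, and Lemma~\ref{lem:mpc} identifies such tight covers with maximal prefix codes. You instead unfold the definition of $\leq_{e}$ directly at the level of strings, testing with the idempotent $yq(yq)^{-1}$ and reading off prefix-comparability; your case analysis showing $y_{x} = y$ is exactly the concrete content of those lemmas combined. What you gain is a self-contained argument that does not depend on the earlier abstract lemmas; what the paper's version gains is that the individual steps are reusable (and indeed Lemmas~\ref{lem:tight-covers}, \ref{lem:mpc}, \ref{lem:corona} are invoked again later, e.g.\ in Theorem~\ref{them:tight-completion}). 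Either way the proof is sound.
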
 
\begin{proof} If $X$ is obtained from $Y$ as a result of one caret expansion then it is easy to check that  $1_{X} \leq_{e} 1_{Y}$.
We therefore need only prove the converse.
Let $X = \{x_{1}, \ldots, x_{s}\}$ and let $Y = \{y_{1},\ldots, y_{t} \}$.
Then, by assumption, $\bigvee_{i=1}^{s} x_{i}x_{i}^{-1} \leq_{e} \bigvee_{j=1}^{t} y_j y_{j}^{-1} $.
By Lemma~\ref{lem:tight-covers}, we have that
 $\bigvee_{i=1}^{s} x_{i}x_{i}^{-1}y_{j}y_{j}^{-1} \leq_{e} y_{j}y_{j}^{-1}$
 for each $j$.
 Suppose that 
 $x_{i}x_{i}^{-1}y_{j}y_{j}^{-1} \neq 0$ for some $i$.
 There are two possibilities.
 Case 1:
 if 
  $x_{i}x_{i}^{-1}y_{j}y_{j}^{-1} = x_{i}x_{i}^{-1}$
 then $x_{i}x_{i}^{-1} \leq y_{j}y_{j}^{-1}$.
 Case 2:
 if 
 $x_{i}x_{i}^{-1}y_{j}y_{j}^{-1} = y_{j}y_{j}^{-1}$
 then $y_{j}y_{j}^{-1} \leq x_{i}x_{i}^{-1}$.
 Let's look at the second case.
 If  $y_{j}y_{j}^{-1} \leq x_{i}x_{i}^{-1}$ then $x_{i}x_{i}^{-1} \leq y_{k}y_{k}^{-1}$ for some $k$.
 It follows that  $y_{j}y_{j}^{-1} = x_{i}x_{i}^{-1}$.
 Because of orthogonality we may therefore assume that only case 1 holds.
 We therefore have that 
 $\bigvee_{i=1}^{s'} x_{i}x_{i}^{-1} \leq_{e} y_{j}y_{j}^{-1}$
 where $x_{i} = y_{j}p_{i}$ where we have relabelled if necessary.
 Thus $\{x_{1}x_{1}^{-1}, \ldots, x_{s'}x_{s'}^{-1} \}$ is a tight cover of $y_{j}y_{j}^{-1}$
 by Lemma~\ref{lem:nanaimo}.
 By Lemma~\ref{lem:corona}, we have that
 $\{p_{1}p_{1}^{-1}, \ldots, p_{s'}p_{s'}^{-1} \}$ is a tight cover of $\varepsilon \varepsilon^{-1}$.
 Thus by Lemma~\ref{lem:mpc}, we have that $\{p_{1}, \ldots, p_{s'}\}$ is a maximal prefix code.
 We can now apply the structure theorem for maximal prefix codes described in Theorem~\ref{them:max-pc}
 to deduce that  $\{p_{1}, \ldots, p_{s'}\}$ is obtained from the trivial maximal prefix code by a sequence of prefix expansions.
 It follows that $\{x_{1}, \ldots, x_{s'} \}$ is obtained from $\{y_{j}\}$ by a sequence of caret expansions.
 The result now follows.
\end{proof}

\section{The tight completion}

In this section, we shall prove that the group defined in Section~2 is actually the group of units of a Boolean inverse monoid $C_{n}$
constructed from $D_{n}$. Our main tool is the Lenz congruence which is the subject of the next section.
We shall construct $C_{n}$ as $D_{n/\equiv}$.
This is defined in terms of one element being essential in another by virtue of Lemma~\ref{lem:nanaimo}.

\subsection{The Lenz congruence $\equiv$}

We begin by stating some results from \cite[Section~9]{LV2020a} in a slightly more general setting.
All proofs can be found in \cite[Section~4.1]{AAM}.
We recall first a definition due to Daniel Lenz \cite{Lenz}.
Let $S$ be an inverse semigroup with zero.
Define the relation $\equiv$ on $S$ as follows:
$s \equiv t$ if and only if for each $0 < x \leq s$ we have that $x^{\downarrow} \cap t^{\downarrow} \neq 0$ 
and for each $0 < y \leq t$ we have that $y^{\downarrow} \cap s^{\downarrow} \neq 0$.
Then $\equiv$ is a $0$-restricted congruence on $S$.
We call it the {\em Lenz congruence}.
We denote the $\equiv$-class of $a$ by $[a]$.
Clearly, if $a \leq_{e} b$ then $a \equiv b$ (see the Introduction for the meaning of this symbol).

\begin{lemma} \label{lem:needed}
In an inverse semigroup, we have that $a \leq_{e} b$ if and only if $\mathbf{d}(a) \leq_{e} \mathbf{d}(b)$.
\end{lemma}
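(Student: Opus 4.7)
The plan is to unpack the notation: $x \leq_{e} y$ means $x \leq y$ together with the singleton tight-cover condition that every nonzero element below $y$ has nonzero meet with $x$. The equivalence is then a matter of transporting this condition between $a,b$ and their domains via the formula $\mathbf{d}(x \wedge y) = \mathbf{d}(x)\mathbf{d}(y)$ for compatible $x,y$ from Lemma~\ref{lem:compatible-meets}; the comparability $a \leq b$ (equivalently $\mathbf{d}(a) \leq \mathbf{d}(b)$) is preserved automatically.

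For the forward direction, suppose $a \leq_{e} b$. Taking domains in $a = b\mathbf{d}(a)$ yields $\mathbf{d}(a) \leq \mathbf{d}(b)$ at once. To verify the tight-cover property at the level of domains, I would pick an arbitrary idempotent $e$ with $0 < e \leq \mathbf{d}(b)$ and \emph{lift} it to $be$. Then $be \leq b$ and $\mathbf{d}(be) = eb^{-1}be = e \neq 0$, so $be$ is a nonzero element below $b$. The hypothesis therefore gives $be \wedge a \neq 0$. Since $be, a \leq b$ they are compatible, and Lemma~\ref{lem:compatible-meets} tells us that $\mathbf{d}(be \wedge a) = \mathbf{d}(be)\mathbf{d}(a) = e\mathbf{d}(a) = e \wedge \mathbf{d}(a)$. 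This meet is nonzero, which is exactly the tight-cover condition for $\mathbf{d}(a)$ inside $\mathbf{d}(b)$.

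For the reverse direction, assume $\mathbf{d}(a) \leq_{e} \mathbf{d}(b)$; in particular $\mathbf{d}(a) \leq \mathbf{d}(b)$, and the inequality $a \leq b$ is in force in context (the statement $a \leq_{e} b$ on the right-hand side of the equivalence carries this as part of its definition). Given any $c$ with $0 < c \leq b$, I would \emph{project} to the domain: $0 < \mathbf{d}(c) \leq \mathbf{d}(b)$, and the tight-cover hypothesis produces $\mathbf{d}(c) \wedge \mathbf{d}(a) \neq 0$. Because $c$ and $a$ both lie below $b$ they are compatible, so by Lemma~\ref{lem:compatible-meets} the meet $c \wedge a$ exists and satisfies $\mathbf{d}(c \wedge a) = \mathbf{d}(c)\mathbf{d}(a) \neq 0$, forcing $c \wedge a \neq 0$.

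There is no serious obstacle: the only technical ingredient is selecting the right lift $be$ of an idempotent below $\mathbf{d}(b)$ in one direction and the right projection $\mathbf{d}(c)$ of an element below $b$ in the other, and in both cases the fact that the participants are bounded above by a common element makes them compatible, so that Lemma~\ref{lem:compatible-meets} delivers both the existence of the meet and the multiplicativity of $\mathbf{d}$ on it.
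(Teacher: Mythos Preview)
Your proof is correct and follows essentially the same route as the paper's: in the forward direction you lift an idempotent $e \leq \mathbf{d}(b)$ to $be$ and apply the hypothesis, and in the reverse direction you project $c \leq b$ to $\mathbf{d}(c)$; in both cases compatibility (via the common upper bound $b$) and Lemma~\ref{lem:compatible-meets} do the work. You are slightly more explicit than the paper in flagging that $a \leq b$ must be taken as part of the hypothesis in the reverse direction (since $\mathbf{d}(a) \leq \mathbf{d}(b)$ alone does not force it), a point the paper leaves implicit.
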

\begin{proof} Suppose first that $a \leq_{e} b$.
We prove that  $\mathbf{d}(a) \leq_{e} \mathbf{d}(b)$.
Let $0 < e \leq \mathbf{d}(b)$.
Then $\mathbf{d}(be) = e$ and so $0 < be \leq b$.
It follows that $a \wedge be \neq 0$.
But $a \sim be$.
It follows that $\mathbf{d}(a) \wedge e \leq 0$.
Thus $\mathbf{d}(a) \leq_{e} \mathbf{d}(b)$, as claimed.
To prove the converse, suppose that 
$\mathbf{d}(a) \leq_{e} \mathbf{d}(b)$.
We prove that $a \leq_{e} b$.
Let $0 < x \leq b$.
Then $0 < \mathbf{d}(x) \leq \mathbf{d}(b)$.
It follows that $\mathbf{d}(x) \mathbf{d}(a) \neq 0$.
But $a \sim x$ and so $\mathbf{d} (a \wedge x) \neq 0$ giving $a \wedge x \neq 0$, as required.
\end{proof}

Let $\theta \colon S \rightarrow T$ be a homomorphism.
We say that it is {\em essential} if $a \leq_{e} b$ implies that $\theta (a) = \theta (b)$.
We define essential congruences in the obvious way.

\begin{lemma}\label{lem:important-tool} Let $S$ be an inverse semigroup.
If $\rho$ is any $0$-restricted, idempotent-pure essential congruence on $S$
then $a \, \rho \, b$ implies that $a \wedge b$ is defined and $(a \wedge b) \leq_{e} a,b$.
\end{lemma}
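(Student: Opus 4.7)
The plan is to exploit the three properties of $\rho$ in sequence, using each at exactly one step.

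First, since $\rho$ is idempotent-pure, the observation recorded before Lemma~\ref{lem:orthogonality}-style machinery (stated in the introduction: ``if $a \, \rho \, b$, where $\rho$ is idempotent-pure, then $a \sim b$'') tells us that $a$ and $b$ are compatible. Hence by Lemma~\ref{lem:compatible-meets} the meet $a \wedge b$ exists and satisfies $\mathbf{d}(a \wedge b) = \mathbf{d}(a)\mathbf{d}(b)$.

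Next, I would reduce the essentiality claim $(a \wedge b) \leq_e a$ to an essentiality claim about idempotents via Lemma~\ref{lem:needed}: it is enough to prove $\mathbf{d}(a)\mathbf{d}(b) \leq_e \mathbf{d}(a)$. So suppose, for contradiction, that there is an idempotent $e$ with $0 < e \leq \mathbf{d}(a)$ and $e \cdot \mathbf{d}(a)\mathbf{d}(b) = e\,\mathbf{d}(b) = 0$. This is the only nontrivial step, and the trick is to transport the contradiction up to $S$ from $\mathsf{E}(S)$: because $\rho$ is a congruence, $a \,\rho\, b$ implies $ae \,\rho\, be$; but $be = b\,\mathbf{d}(b)\,e = 0$, so $ae \,\rho\, 0$, and $0$-restrictedness then forces $ae = 0$. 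However $\mathbf{d}(ae) = e \neq 0$, a contradiction. By symmetry the same argument delivers $(a \wedge b) \leq_e b$.

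The main (and essentially only) obstacle is the step where one passes from the hypothetical idempotent $e$ witnessing failure of essentiality back to a contradiction in $S$; once one sees that multiplying $a \,\rho\, b$ by $e$ reduces $be$ to zero, both $0$-restrictedness and the congruence property combine cleanly. Notice that the ``essential'' hypothesis on $\rho$ is not actually invoked in this argument; it is presumably listed in the statement because the lemma will be applied to the Lenz congruence $\equiv$, which is the canonical essential congruence in the subsequent development.
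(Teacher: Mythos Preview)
Your proof is correct and follows essentially the same strategy as the paper's: both multiply the congruence $a\,\rho\,b$ on the right by a suitable idempotent and then invoke $0$-restrictedness to obtain the required nonvanishing, the only cosmetic difference being that you first reduce to the semilattice of idempotents via Lemma~\ref{lem:needed} whereas the paper argues directly with an arbitrary $0<x\le a$. Your closing observation is also accurate: neither argument uses the ``essential'' hypothesis on $\rho$, which plays no role in this lemma.
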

\begin{proof} Let $a \, \rho \, b$.
Since $\rho$ is idempotent-pure, we have that $a \sim b$.
Thus $a \wedge b$ is defined by Lemma~\ref{lem:compatible-meets}.
We prove that $(a \wedge b) \leq_{e} a$.
By assumption $a \wedge b = a \mathbf{d}(b) = b\mathbf{d}(a)$.
Observe that $a \, \rho \, b \mathbf{d}(a)$ and so $a \wedge b \neq 0$.
Also, $a \, \rho \, (a \wedge b)$.
Let $0 < x \leq a$.
It follows that $x \sim a \wedge b$.
Thus $x \wedge (a \wedge b) = (a \wedge b)\mathbf{d}(x)$.
We deduce that $x \rho b\mathbf{d}(a)\mathbf{d}(x)$.
Using the fact that $\rho$ is $0$-restricted we deduce that $(a \wedge b) \wedge x \neq 0$.
\end{proof}

\begin{lemma}\label{lem:new-characterization} Let $S$ be an inverse semigroup
in which $\equiv$ is idempotent-pure.
Then $a \equiv b$ if and only if there exists $c \leq_{e} a,b$.
\end{lemma}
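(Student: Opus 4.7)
The plan is to prove the equivalence in two directions, using infrastructure already established in the excerpt.

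For the forward direction, assume $a \equiv b$. I want to apply Lemma~\ref{lem:important-tool} to the congruence $\rho = {\equiv}$. I need to check its three hypotheses. First, $\equiv$ is $0$-restricted — this is stated in the sentence introducing the Lenz congruence. Second, $\equiv$ is essential: this is exactly the observation recorded right after the definition of $\equiv$, namely that $a \leq_{e} b$ implies $a \equiv b$. Third, $\equiv$ is idempotent-pure, which is precisely the hypothesis of our lemma. Hence Lemma~\ref{lem:important-tool} applies, yielding that $a \wedge b$ exists and $(a \wedge b) \leq_{e} a$ and $(a \wedge b) \leq_{e} b$. Setting $c = a \wedge b$ gives the desired witness.

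For the backward direction, suppose there is some $c$ with $c \leq_{e} a$ and $c \leq_{e} b$. Again using the observation that $\leq_{e}$ implies $\equiv$, we obtain $c \equiv a$ and $c \equiv b$. Since $\equiv$ is an equivalence relation (it is a congruence), symmetry and transitivity yield $a \equiv b$.

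Neither direction presents a serious obstacle — the entire proof is a short application of Lemma~\ref{lem:important-tool} together with the defining property of essentiality. The only subtlety worth flagging is the verification that $\equiv$ satisfies the hypotheses of Lemma~\ref{lem:important-tool}, but all three properties ($0$-restrictedness, essentiality, idempotent-purity) are either built into the definition of $\equiv$, immediate from a recorded observation, or assumed outright. Thus the real content of the lemma is that when $\equiv$ happens to be idempotent-pure, the abstract equivalence classes can be replaced by a concrete common essential lower bound.
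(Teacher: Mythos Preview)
Your proof is correct and follows essentially the same route as the paper's own argument: the backward direction is immediate from the fact that $c \leq_{e} a$ implies $c \equiv a$, and the forward direction is exactly an application of Lemma~\ref{lem:important-tool} with $\rho = {\equiv}$ and $c = a \wedge b$. The only difference is that you spell out the verification of the three hypotheses of Lemma~\ref{lem:important-tool}, which the paper leaves implicit.
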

\begin{proof} It is immediate that if there exists $c \leq_{e} a,b$ then $a \equiv b$.
The converse follows by  Lemma~\ref{lem:important-tool}.
\end{proof}

The above result enables us to connect essential idempotents with the Lenz congruence
by Lemma~\ref{lem:essential-idempotents}. 

The proof of the following is immediate by Lemma~\ref{lem:important-tool} and Lemma~\ref{lem:new-characterization}.

\begin{theorem}\label{them:uniqueness} 
Let $S$ be an inverse semigroup on which $\equiv$ is idempotent-pure.
Then $\equiv$ is the unique $0$-restricted, idempotent-pure essential congruence on $S$.
\end{theorem}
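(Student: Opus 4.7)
The plan is to split the statement into two halves: first to verify that $\equiv$ itself is a $0$-restricted, idempotent-pure, essential congruence, and then to show that any other congruence with these three properties must coincide with it. For the first half, $\equiv$ is $0$-restricted by the remark made at its definition; it is idempotent-pure by the standing hypothesis of the theorem; and it is essential because, as already noted just after the definition, $a \leq_e b$ implies $a \equiv b$. So $\equiv$ sits in the class of congruences in question.

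For uniqueness, let $\rho$ be any $0$-restricted, idempotent-pure, essential congruence on $S$, and I would establish $\rho = \mathord{\equiv}$ by double inclusion. For $\mathord{\equiv} \subseteq \rho$, suppose $a \equiv b$; since $\equiv$ is idempotent-pure (our hypothesis), Lemma~\ref{lem:new-characterization} produces some $c$ with $c \leq_e a$ and $c \leq_e b$, and then, because $\rho$ is essential, $a \mathbin{\rho} c \mathbin{\rho} b$, so $a \mathbin{\rho} b$. For the reverse inclusion $\rho \subseteq \mathord{\equiv}$, suppose $a \mathbin{\rho} b$; by Lemma~\ref{lem:important-tool} the meet $a \wedge b$ is defined and satisfies $(a \wedge b) \leq_e a$ and $(a \wedge b) \leq_e b$, and since $\leq_e$ always forces $\equiv$, we obtain $a \equiv a \wedge b \equiv b$.

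The only real obstacle is organizational: Lemmas~\ref{lem:important-tool} and \ref{lem:new-characterization} have been set up precisely to deliver the two ingredients needed in the two inclusions, and the one point requiring care is to invoke Lemma~\ref{lem:new-characterization} in its nontrivial direction, which is exactly where the hypothesis that $\equiv$ is idempotent-pure is used. The other direction, that essential refinement implies Lenz equivalence, is the cheap definitional observation, and no further machinery is required.
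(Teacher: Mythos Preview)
Your proof is correct and follows precisely the route the paper intends: the paper states only that the result is immediate from Lemma~\ref{lem:important-tool} and Lemma~\ref{lem:new-characterization}, and you have spelled out exactly how those two lemmas yield the two inclusions. There is no substantive difference between your argument and the paper's.
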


Theorem~\ref{them:uniqueness} goes some way to explain the true nature of the Lenz congruence,
at least in the context of this paper.

The following was proved as \cite[Lemma~9.12]{LV2020a}.

\begin{lemma}\label{lem:idpt-pure-dist} Let $S$ be a distributive inverse semigroup.
If $\rho$ is idempotent-pure then $S/\rho$ is a distributive inverse semigroup and the natural map from $S$ to $S/\rho$
is a morphism of distributive inverse semigroup.
If, in addition, $S$ is a $\wedge$-semigroup then $S/\rho$ is a $\wedge$-semigroup
and the morphism preserves meets.
\end{lemma}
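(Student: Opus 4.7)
The strategy is to transfer the distributive structure of $S$ to $S/\rho$ using the observation that an idempotent-pure congruence respects compatibility. Specifically, if $[a] \sim [b]$ in $S/\rho$, then $[a^{-1}b]$ is an idempotent of $S/\rho$, so $a^{-1}b$ is $\rho$-related to an idempotent of $S$, hence itself idempotent by idempotent-purity; similarly for $ab^{-1}$. Thus any compatible pair in $S/\rho$ lifts to a compatible pair in $S$, so the join $a \vee b$ (which exists in $S$ by distributivity) is the natural candidate for $[a] \vee [b]$. Likewise, in the $\wedge$-semigroup case, $a \wedge b$ always exists in $S$ and is the natural candidate for $[a] \wedge [b]$. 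The remaining work is to show these candidates give the joins/meets in $S/\rho$, which automatically yields well-definedness by uniqueness.

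The meet case is the cleaner of the two and I would dispatch it first. Using the standard characterisation of the induced order, $[c] \leq [a]$ iff there is $c' \rho c$ with $c' \leq a$, I would prove that $[a \wedge b]$ is the meet of $[a]$ and $[b]$ in $S/\rho$. The lower-bound property is immediate. For maximality, given $[c] \leq [a], [b]$, lift to $c_1, c_2 \in [c]$ with $c_1 \leq a$ and $c_2 \leq b$; then $c_1 \rho c_2$ yields $c_1 \sim c_2$ by idempotent-purity, so Lemma~\ref{lem:compatible-meets} gives $c_1 \wedge c_2 = c_1 \mathbf{d}(c_2)$, and combined with $\mathbf{d}(c_1) \rho \mathbf{d}(c_2)$ one reads off $c_1 \wedge c_2 \rho c_1 \mathbf{d}(c_1) = c_1 \rho c$. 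Since $c_1 \wedge c_2 \leq a \wedge b$, this forces $[c] \leq [a \wedge b]$. Uniqueness of meets immediately gives well-definedness and meet-preservation by the quotient map.

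The join case follows a parallel plan but with the additional difficulty that joins are not given by an algebraic formula, so well-definedness does not drop out for free. I would show that $[a \vee b]$ is the supremum of $[a]$ and $[b]$ in $S/\rho$. For minimality, given $[c] \geq [a], [b]$, take the lifts $c\mathbf{d}(a), c\mathbf{d}(b) \leq c$ with $c\mathbf{d}(a) \rho a$ and $c\mathbf{d}(b) \rho b$; by distributivity their join in $S$ is $c\mathbf{d}(a) \vee c\mathbf{d}(b) = c \mathbf{d}(a \vee b) \leq c$. The crux is to show $c \mathbf{d}(a \vee b) \rho a \vee b$, which expresses exactly that $\rho$ preserves this particular compatible join. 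The plan for this step is: (i) verify compatibility of $a \vee b$ and $c \mathbf{d}(a \vee b)$ by expanding the product $(a \vee b)(c \mathbf{d}(a \vee b))^{-1}$ via distributivity into a join of four pieces, each of which is $\rho$-related to an idempotent by the known cross-compatibilities and hence itself idempotent by idempotent-purity; (ii) compute the compatible meet $(a \vee b) \wedge c \mathbf{d}(a \vee b)$ via Lemma~\ref{lem:meets-joins} and the algebraic formula of Lemma~\ref{lem:compatible-meets}, and trace each summand back to $a$ or $b$ modulo $\rho$; (iii) conclude that $a \vee b$ and $c \mathbf{d}(a \vee b)$ are $\rho$-equivalent to their common meet. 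This interplay between idempotent-purity and the distributivity of meets over joins is the main obstacle in the entire argument. Once joins in $S/\rho$ are pinned down, the distributive identity $[c]([a] \vee [b]) = [c][a] \vee [c][b]$ follows immediately by pushing the identity $c(a \vee b) = ca \vee cb$ through the quotient map, and the morphism property of the quotient map is then automatic, completing both clauses of the lemma.
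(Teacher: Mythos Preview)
The paper does not prove this lemma; it cites \cite[Lemma~9.12]{LV2020a}. So there is no in-paper argument to compare against, and your proposal stands or falls on its own.

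Your meet argument is fine, but the join argument has a real gap at step~(iii). After establishing compatibility of $a\vee b$ and $c\,\mathbf{d}(a\vee b)$, the two formulae in Lemma~\ref{lem:compatible-meets} give their meet as $(a\vee b)\,\mathbf{d}(c) = c\,\mathbf{d}(a\vee b)$ itself, so what remains is to show $a\vee b \;\rho\; a\mathbf{d}(c)\vee b\mathbf{d}(c)$. You know $a\mathbf{d}(c)\,\rho\,a$ and $b\mathbf{d}(c)\,\rho\,b$, and your plan is to ``trace each summand back'' and conclude $a\mathbf{d}(c)\vee b\mathbf{d}(c)\,\rho\,a\vee b$. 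But that inference is exactly the assertion that $\rho$ respects this compatible join, which is what the whole argument is meant to establish; the reasoning is circular.

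The circularity cannot be repaired under the stated hypotheses, because the lemma is false as written. Take $S$ to be the distributive lattice $\{0,1\}\times\{0,1,2\}$ with a new bottom $\bot$ adjoined, regarded as an inverse semigroup under $\wedge$ (every element idempotent, so every congruence is idempotent-pure). Let $\rho$ identify $(0,0)$ with $(0,1)$ and nothing else; one checks directly that $\rho$ is a $0$-restricted idempotent-pure congruence. In $S/\rho$ the interval from $[(0,0)]$ to $[(1,2)]$ is the pentagon $N_{5}$ (with $[(1,0)]<[(1,1)]$ and $[(0,2)]$ incomparable to both), so $S/\rho$ is not distributive; moreover the quotient map sends the join $(0,1)\vee(1,0)=(1,1)$ to $[(1,1)]$, whereas $[(0,1)]\vee[(1,0)]=[(0,0)]\vee[(1,0)]=[(1,0)]$, so it does not preserve compatible joins. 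The result in \cite{LV2020a} presumably carries a further hypothesis---most plausibly that $\rho$ be essential, equivalently (Theorem~\ref{them:uniqueness}) that $\rho={\equiv}$---and that is the only setting in which the present paper invokes the lemma. You will need to isolate and exploit that extra hypothesis to close the gap.
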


The proof of the following is immediate by Lemma~\ref{lem:nanaimo}.

\begin{lemma}\label{lem:tight} Let $S$ be a distributive inverse semigroup.
Every tight filter is a prime filter.
\end{lemma}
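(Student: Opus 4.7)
The plan is to unpack the definition of tightness using Lemma~\ref{lem:nanaimo}. Let $F$ be a tight filter in the distributive inverse semigroup $S$, and suppose $a \vee b \in F$ where the join exists. I want to exhibit $\{a,b\}$ as a tight cover of $a \vee b$ so that tightness of $F$ forces $a \in F$ or $b \in F$.

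First I would observe that, trivially, $a \vee b \leq_{e} a \vee b$: for any $0 < x \leq a \vee b$ one has the meet $x \wedge (a \vee b) = x \neq 0$, so the singleton $\{a \vee b\}$ is essential in $a \vee b$. Equivalently, invoking Lemma~\ref{lem:nanaimo} with the one-element family $\{a \vee b\}$, or better with the two-element family $\{a,b\}$ whose join is $a \vee b$, the identity $\bigvee \{a,b\} = a \vee b \leq_{e} a \vee b$ shows that $\{a,b\}$ is a tight cover of $a \vee b$.

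Next, since $F$ is a tight filter containing the element $a \vee b$ and since $\{a,b\}$ is a tight cover of $a \vee b$, the definition of tight filter immediately yields $a \in F$ or $b \in F$. This verifies the definition of prime filter and completes the argument.

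There is essentially no obstacle here: the entire content is the equivalence provided by Lemma~\ref{lem:nanaimo} between tight covers and essential joins, together with the self-essentiality $a \vee b \leq_{e} a \vee b$. This is why the hint labels the result as immediate, and why Lemma~\ref{lem:tight} serves as a convenient restatement of the second half of Lemma~\ref{lem:filter-inclusions} in a form tailored to the applications that follow.
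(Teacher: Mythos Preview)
Your proof is correct and takes essentially the same approach as the paper, which simply records the result as immediate from Lemma~\ref{lem:nanaimo}. You have merely spelled out the one-line observation that $\{a,b\}$ is a tight cover of $a\vee b$ because $a\vee b \leq_{e} a\vee b$, and then applied the definition of tight filter; this is exactly what the paper intends.
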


By \cite[Proposition~5.10]{LL} and Lemma~\ref{lem:tight}, we have that:
every ultrafilter is a tight filter, and every tight filter is a prime filter.

\begin{lemma}\label{lem:mars} Let $S$ be an inverse semigroup with zero
in which $\equiv$ is idempotent-pure.
Let $X$ be a tight filter in $S$.
\begin{enumerate}
\item If $x \in X$ and $y \leq_{e} x$ then $y \in X$.
\item If $x \in X$ and $y \equiv x$ then $y \in X$.
\end{enumerate}
\end{lemma}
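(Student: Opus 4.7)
The plan is to exploit the definition of tight filter in an essentially direct way, together with Lemma~\ref{lem:new-characterization}, which under the hypothesis that $\equiv$ is idempotent-pure lets us replace the relation $y \equiv x$ by the existence of a common essential lower bound.

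For part (1), I would argue that $y \leq_{e} x$ is, by definition, exactly the statement that the singleton $\{y\}$ is a tight cover of $x$. Since $x \in X$ and $X$ is a tight filter, the definition of tightness forces some element of any tight cover of $x$ to lie in $X$; with a singleton cover this element must be $y$ itself. This gives $y \in X$, so the first claim is immediate from unpacking definitions.

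For part (2), suppose $x \in X$ and $y \equiv x$. Because $\equiv$ is assumed to be idempotent-pure, Lemma~\ref{lem:new-characterization} produces an element $c$ with $c \leq_{e} x$ and $c \leq_{e} y$. Applying part (1) to the pair $(x,c)$ gives $c \in X$. Since $c \leq_{e} y$ implies in particular $c \leq y$, and since $X = X^{\uparrow}$ by virtue of being a filter, we conclude $y \in X$, as required.

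There is no real obstacle here beyond recognising that $\leq_{e}$ is literally a singleton tight cover and that Lemma~\ref{lem:new-characterization} reduces the Lenz relation to a pair of essential inequalities; the hypothesis that $\equiv$ is idempotent-pure is exactly what is needed to invoke that lemma, and is used in no other way.
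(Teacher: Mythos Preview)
Your proof is correct and follows essentially the same approach as the paper: part~(1) is exactly the observation that $y \leq_{e} x$ means $\{y\}$ is a tight cover of $x$, and part~(2) is deduced from part~(1) via Lemma~\ref{lem:new-characterization}. The paper is terser, but you have simply made explicit the use of upward closure of $X$ that is implicit there.
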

\begin{proof} (1) By definition, $\{y\}$ is a tight cover of $x$.
It follows that $y \in X$.
(2) This follows by (1) and Lemma~\ref{lem:new-characterization}.
\end{proof}

To prove that a distributive inverse semigroup is Boolean,
we have to prove, by \cite[Lemma~3.20]{LL}, that every prime filter is an ultrafilter.
By Lemma~\ref{lem:idpt-pure-dist}, if $S$ is distributive and $\equiv$ is idempotent-pure
then $S/\equiv$ is distributive.
The following theorem is now relevant.
It is proved as \cite[Theorem~4.12]{AAM}.

\begin{theorem}\label{them:seven} Let $S$ be a distributive inverse semigroup on which $\equiv$ is idem\-potent-pure. 
Then $S/\equiv$ is Boolean if and only if every tight filter in $S$ is an ultrafilter.
\end{theorem}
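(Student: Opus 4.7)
The plan is to apply \cite[Lemma~3.20]{LL}, which asserts that a distributive inverse semigroup is Boolean if and only if every prime filter is an ultrafilter. Since $\equiv$ is idempotent-pure, Lemma~\ref{lem:idpt-pure-dist} tells us that $S/\equiv$ is distributive, so the theorem reduces to showing: every prime filter in $S/\equiv$ is an ultrafilter if and only if every tight filter in $S$ is an ultrafilter. I will establish this via an inclusion-preserving bijection between tight filters in $S$ and prime filters in $S/\equiv$ induced by the canonical projection $\pi \colon S \rightarrow S/\equiv$, sending $X \mapsto \pi(X) = \{[x] \colon x \in X\}$ and $G \mapsto \pi^{-1}(G)$.

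First I would verify that $\pi$ sends tight filters to prime filters and $\pi^{-1}$ sends prime filters to tight filters. Every tight filter $X$ is $\equiv$-saturated by Lemma~\ref{lem:mars}, so $\pi(X)$ is unambiguously defined; upward closure in $S/\equiv$ reduces, via the standard observation that $[x] \leq [y]$ implies $yx^{-1}x \equiv x$ with $yx^{-1}x \leq y$, to upward closure of $X$ combined with saturation, and primeness uses that tight filters in distributive inverse semigroups are already prime (Lemma~\ref{lem:filter-inclusions}), lifted through $\pi$ via saturation and the consequence of idempotent-purity that compatibility in $S/\equiv$ reflects compatibility in $S$. Conversely, for $G$ prime in $S/\equiv$, properness of $\pi^{-1}(G)$ is a matter of $0$-restrictedness, directedness combines representatives of a common lower bound in $G$ via a meet of $\equiv$-related elements supplied by Lemma~\ref{lem:important-tool}, and tightness uses Lemma~\ref{lem:nanaimo} to recast a tight cover $\{s_{1}, \ldots, s_{m}\}$ of $s$ as $\bigvee s_{i} \leq_{e} s$, whence $\bigvee [s_{i}] = [s] \in G$ and primeness of $G$ yields $[s_{i}] \in G$ for some $i$. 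The two maps are mutually inverse by $\equiv$-saturation and surjectivity of $\pi$.

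With the bijection in hand, both implications become routine. For $(\Rightarrow)$: if $\pi(X)$ is an ultrafilter in $S/\equiv$ and $F \supseteq X$ is a proper filter in $S$, the upward closure $\pi(F)^{\uparrow}$ is a proper filter in $S/\equiv$ containing $\pi(X)$, so equals $\pi(X)$, and $\equiv$-saturation of $X$ then forces $F = X$. For $(\Leftarrow)$: a prime $G$ pulls back to a tight, hence by hypothesis ultra, filter $\pi^{-1}(G)$; any proper $G' \supseteq G$ gives $\pi^{-1}(G') = \pi^{-1}(G)$ by maximality, and surjectivity of $\pi$ yields $G' = G$. The main technical obstacle is the directedness of $\pi^{-1}(G)$: two elements $s, t$ in the preimage with common image lower bound $[v] \in G$ yield representatives $c_{s} \leq s$ and $c_{t} \leq t$ with $c_{s} \equiv c_{t} \equiv v$, and one then needs the meet $c_{s} \wedge c_{t}$ to be non-zero and to still lie in $\pi^{-1}(G)$; this is precisely what Lemma~\ref{lem:important-tool} supplies under the idempotent-purity of $\equiv$, the unique point at which that hypothesis is essentially used.
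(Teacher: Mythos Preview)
The paper does not actually prove this theorem; it simply cites \cite[Theorem~4.12]{AAM}. So there is no in-paper argument to compare against, and your proposal supplies what the paper omits.

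Your argument is correct and is the natural one. The correspondence you set up between tight filters in $S$ and prime filters in $S/\equiv$ is exactly the right mechanism, and you have identified the genuine pressure points: $\equiv$-saturation of tight filters (Lemma~\ref{lem:mars}) to make $\pi(X)$ well-behaved, reflection of compatibility through idempotent-purity to handle joins and primeness, and Lemma~\ref{lem:important-tool} to secure directedness of $\pi^{-1}(G)$. The only place worth tightening is the $(\Leftarrow)$ step where you pass from $\pi^{-1}(G') = \pi^{-1}(G)$ to $G' = G$: this uses $\pi(\pi^{-1}(G')) = G'$ for an arbitrary proper filter $G'$, which holds by surjectivity of $\pi$ alone (no primeness needed), so your invocation of surjectivity there is correct but you might say explicitly that the identity $\pi(\pi^{-1}(H)) = H$ holds for every subset $H$ of $S/\equiv$. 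Likewise in $(\Rightarrow)$, from $\pi(F)^{\uparrow} = \pi(X)$ you conclude $\pi(F) \subseteq \pi(X)$ and then use saturation of $X$; that chain is fine, just make the containment direction explicit.
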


The following can easily be deduced from \cite{Lawson2010, Lawson2012}.
Let $A$ be a filter in the inverse semigroup $S$.
Define $\mathbf{d}(A) = (A^{-1}A)^{\uparrow}$.
Then $\mathbf{d}(A)$ is a filter in $S$ which is also an inverse subsemigroup. 
Furthermore, $A = (a \mathbf{d}(A))^{\uparrow}$ for any $a \in A$. 
Clearly, $0 \in A$ if and only if $0 \in \mathbf{d}(A)$.
Also, $\mathbf{d}(A) \cap \mathsf{E}(S)$ is a filter in $\mathsf{E}(S)$.
Lemma~\ref{lem:rioja} below shows that to check whether every tight filter is an ultrafilter
it is enough to restrict ones attention to the distributive lattice of idempotents.

\begin{lemma}\label{lem:rioja} Let $A$ be a filter in an inverse semigroup $S$.
Then $x \in \mathbf{d}(A)$ if and only if $a^{-1}a \leq x$ for some $a \in A$.
\end{lemma}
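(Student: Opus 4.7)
The plan is to unpack the definition $\mathbf{d}(A) = (A^{-1}A)^{\uparrow}$ and exploit the filter property of $A$. Since the claim is an ``if and only if'' concerning upward closure, the two directions are essentially dual and both should be short.

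For the easy direction ($\Leftarrow$), suppose $a^{-1}a \leq x$ for some $a \in A$. Taking both factors equal to $a$ shows that $a^{-1}a \in A^{-1}A$, hence $a^{-1}a \in \mathbf{d}(A)$. Since $\mathbf{d}(A) = (A^{-1}A)^{\uparrow}$ is upward closed by definition, $x \in \mathbf{d}(A)$ follows.

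For the forward direction ($\Rightarrow$), suppose $x \in \mathbf{d}(A)$. Then by definition there exist $a, b \in A$ with $a^{-1}b \leq x$. Since $A$ is a filter, there exists $c \in A$ with $c \leq a$ and $c \leq b$. Now I would invoke the standard fact that, in an inverse semigroup, the natural partial order is preserved by both multiplication and inversion: from $c \leq a$ we get $c^{-1} \leq a^{-1}$, and combining with $c \leq b$ yields $c^{-1}c \leq a^{-1}b$. Transitivity then gives $c^{-1}c \leq x$ with $c \in A$, as required.

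There is no real obstacle here; the only technical ingredient is the monotonicity of multiplication and inversion under $\leq$, which is part of the basic theory recalled in the introduction. The lemma is a routine bookkeeping step, and I expect the entire proof to occupy only a few lines.
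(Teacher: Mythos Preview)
Your argument is correct. The paper does not actually supply a proof of this lemma; it is stated without proof, with only a preceding remark that such facts ``can easily be deduced from \cite{Lawson2010, Lawson2012}.'' Your short filter argument---using a common lower bound $c \leq a,b$ in $A$ and the monotonicity of inversion and multiplication to obtain $c^{-1}c \leq a^{-1}b \leq x$---is exactly the standard verification and fills in the omitted details appropriately.
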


The following was proved as \cite[Proposition~4.14]{AAM}.

\begin{proposition}\label{prop:idpts-semigroup} Let $S$ be a distributive inverse semigroup.
Then every tight filter in $S$ is an ultrafilter in $S$
if and only if
every tight filter in $\mathsf{E}(S)$ is an ultrafilter in $\mathsf{E}(S)$.
\end{proposition}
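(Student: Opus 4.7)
The plan is to reduce the proposition to a general bijective correspondence between filters. For each non-zero $a\in S$, I aim to show that the assignments $A\mapsto F:=\mathbf{d}(A)\cap\mathsf{E}(S)$ and $F\mapsto(aF)^{\uparrow}$ set up a bijection between proper filters of $S$ containing $a$ and proper filters of $\mathsf{E}(S)$ containing $\mathbf{d}(a)$, and that this bijection matches tight filters with tight filters and ultrafilters with ultrafilters. That the maps are mutually inverse follows from the excerpt's identity $A=(a\mathbf{d}(A))^{\uparrow}$ combined with Lemma~\ref{lem:rioja}: each $x\in\mathbf{d}(A)$ dominates some $b^{-1}b$ with $b\in A$, so $ax\geq a(b^{-1}b)$ and hence $(a\mathbf{d}(A))^{\uparrow}$ coincides with $(aF)^{\uparrow}$.

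For the tightness correspondence, assume first that $A$ is tight and let $\{e_1,\dots,e_m\}$ be a tight cover of $e\in F$ in $\mathsf{E}(S)$. Then $ae\in aF\subseteq A$, and $\{ae_1,\dots,ae_m\}$ is a tight cover of $ae$: each $ae_i\leq ae$, and for $0<x\leq ae$ we have $\mathbf{d}(x)\leq e$, so some $\mathbf{d}(x)e_i\neq 0$, which by Lemma~\ref{lem:compatible-meets} equals $\mathbf{d}(x\wedge ae_i)$. Tightness of $A$ gives $ae_i\in A$ for some $i$, whence $e_i\mathbf{d}(a)\in F$ and $e_i\in F$ by upward closure. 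Conversely assume $F$ is tight and let $\{s_1,\dots,s_m\}$ be a tight cover of $s\in A$. Pick $f\in F$ with $af\leq s$ and, replacing $f$ by $f\mathbf{d}(a)\in F$, assume $\mathbf{d}(af)=f$. Then $\{s_i\wedge af\}$ is a tight cover of $af$ (for $0<y\leq af\leq s$ some $y\wedge s_i\neq 0$, and $y\leq af$ forces $y\wedge s_i=y\wedge(s_i\wedge af)$); applying $\mathbf{d}$ and invoking Lemmas~\ref{lem:nanaimo} and~\ref{lem:needed} produces the tight cover $\{\mathbf{d}(s_i)f\}$ of $f$ in $\mathsf{E}(S)$. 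Tightness of $F$ then yields $\mathbf{d}(s_i)f\in F$ for some $i$, so $s_i\wedge af=a(f\mathbf{d}(s_i))\in aF\subseteq A$ and hence $s_i\in A$ by upward closure.

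For the ultrafilter correspondence I argue contrapositively. If $F\subsetneq F'$ in $\mathsf{E}(S)$, then $A':=(aF')^{\uparrow}$ is a proper filter of $S$ (propriety uses $\mathbf{d}(a)\in F'$ and binary meets in $F'$ being non-zero) strictly containing $A=(aF)^{\uparrow}$: any $f'\in F'\setminus F$ gives $af'\in A'$, and $af'\in A$ would force $ag\leq af'$ for some $g\in F$, whence $g\mathbf{d}(a)\leq f'$ and $f'\in F$ by upward closure, a contradiction. Conversely if $A\subsetneq B$ in $S$, then $F_B:=\mathbf{d}(B)\cap\mathsf{E}(S)$ is a proper filter of $\mathsf{E}(S)$ with $F\subseteq F_B$; if $F$ is ultra this forces $F=F_B$, and fixing $a\in A$ gives $A=(aF)^{\uparrow}=(aF_B)^{\uparrow}=B$, a contradiction. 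The proposition is now immediate: to a tight filter $F$ of $\mathsf{E}(S)$ we associate $A:=(eF)^{\uparrow}$ for any $e\in F$, which is tight by the correspondence, hence ultra by hypothesis, so $F$ is ultra; the converse direction is symmetric.

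I expect the main obstacle to be the \textbf{$F$ tight $\Rightarrow$ $A$ tight} half of the tightness correspondence. An arbitrary tight cover $\{s_i\}$ of $s\in A$ has no a priori connection to the idempotent structure of $F$, so one must first intersect it with a canonical witness $af\leq s$ (and normalise $f$ so that $\mathbf{d}(af)=f$) before $\mathbf{d}$ can be usefully applied to descend to an idempotent cover; the final step, rewriting $s_i\wedge af$ as $a(f\mathbf{d}(s_i))$, is what allows a successful index to lift back into $A$ rather than merely registering $\mathbf{d}(s_i)\in F$.
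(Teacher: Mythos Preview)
Your proof is correct. The paper does not supply its own argument for this proposition, instead citing \cite[Proposition~4.14]{AAM}, so there is no in-text proof to compare against. Your approach---setting up, for each non-zero $a\in S$, mutually inverse maps $A\mapsto \mathbf{d}(A)\cap\mathsf{E}(S)$ and $F\mapsto(aF)^{\uparrow}$ between proper filters of $S$ containing $a$ and proper filters of $\mathsf{E}(S)$ containing $\mathbf{d}(a)$, and then verifying that tightness and maximality each transfer across this correspondence---is exactly the standard reduction one expects, and is in the spirit of the surrounding material (the text immediately preceding Lemma~\ref{lem:rioja} and the lemma itself are clearly placed there precisely to support such an argument).

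A couple of minor points worth making explicit for a polished write-up. First, in the direction ``$A$ tight $\Rightarrow$ $F$ tight'' you use $ae\in A$; this is true but deserves a line: from $e\in F=\mathbf{d}(A)\cap\mathsf{E}(S)$ and Lemma~\ref{lem:rioja} there is $b\in A$ with $b^{-1}b\leq e$, and taking $c\in A$ with $c\leq a,b$ gives $c=ac^{-1}c\leq ae$, whence $ae\in A$ by upward closure. Second, when you pass between tight covers of an idempotent in $S$ and in $\mathsf{E}(S)$ you are implicitly using that idempotents form an order ideal, so any $0<x\leq e$ in $S$ is already an idempotent; this makes the two notions of tight cover coincide for idempotents, which is what allows the descent to $\mathsf{E}(S)$ to be lossless.
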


We may summarize the results of this section in the following universal characterization.

\begin{theorem} Let $S$ be a distributive inverse semigroup in which every tight filter is an ultrafilter and $\equiv$ is 
idempotent-pure.
Put $T = S/\equiv$.
Then $T$ is a Boolean inverse semigroup and the natural map $\beta \colon S \rightarrow T$ 
is universal for all essential morphism $\theta \colon S \rightarrow B$ to Boolean inverse semigroups.
\end{theorem}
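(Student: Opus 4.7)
The plan is to combine the preceding machinery almost immediately. First, since $S$ is distributive and $\equiv$ is idempotent-pure, Lemma~\ref{lem:idpt-pure-dist} tells us that $T = S/\equiv$ is a distributive inverse semigroup and $\beta$ is a morphism of distributive inverse semigroups. The additional hypothesis that every tight filter in $S$ is an ultrafilter, combined with Theorem~\ref{them:seven}, then upgrades this to the assertion that $T$ is a Boolean inverse semigroup. The map $\beta$ is essential because if $a \leq_{e} b$ in $S$, then by definition $a \equiv b$ (as noted just before Lemma~\ref{lem:needed}), so $\beta(a) = \beta(b)$.

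For the universal property, let $\theta \colon S \to B$ be any essential morphism to a Boolean inverse semigroup $B$. Since $\beta$ is surjective, any factorisation through $\beta$ is unique if it exists, and the only candidate is the map $\bar{\theta} \colon T \to B$ defined by $\bar{\theta}([s]) = \theta(s)$. To see this is well-defined, suppose $[s] = [t]$, i.e., $s \equiv t$. Because $\equiv$ is idempotent-pure on $S$, Lemma~\ref{lem:new-characterization} furnishes a $c$ with $c \leq_{e} s,t$, and then essentiality of $\theta$ gives $\theta(s) = \theta(c) = \theta(t)$. It is immediate that $\bar{\theta}$ preserves products and inverses.

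The one step requiring real care is that $\bar{\theta}$ preserves compatible joins; this is the main obstacle. Suppose $[a]$ and $[b]$ are compatible in $T$. Then $[a]^{-1}[b] = [a^{-1}b]$ is an idempotent of $T$, which means $a^{-1}b$ is $\equiv$-related to some idempotent of $S$; since $\equiv$ is idempotent-pure, $a^{-1}b$ itself is an idempotent of $S$. Symmetrically, $ab^{-1}$ is an idempotent of $S$, so $a$ and $b$ are compatible in $S$ and hence $a \vee b$ exists there. Since $\beta$ and $\theta$ are both morphisms of distributive inverse semigroups, $\beta(a \vee b) = [a] \vee [b]$ and $\theta(a \vee b) = \theta(a) \vee \theta(b)$, whence
\[
\bar{\theta}([a] \vee [b]) = \bar{\theta}(\beta(a \vee b)) = \theta(a \vee b) = \bar{\theta}([a]) \vee \bar{\theta}([b]).
\]
An easy induction extends this to arbitrary finite compatible joins. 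Uniqueness of $\bar{\theta}$ is automatic from the surjectivity of $\beta$, completing the verification of the universal property.
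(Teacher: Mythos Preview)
Your proof is correct and follows essentially the same approach as the paper: you invoke Theorem~\ref{them:seven} for Booleanness, use Lemma~\ref{lem:new-characterization} to show the induced map is well-defined, and lift compatibility from $T$ to $S$ via idempotent-purity before applying Lemma~\ref{lem:idpt-pure-dist}. Your treatment is in fact slightly more explicit than the paper's --- you spell out why $\beta$ itself is essential and why uniqueness follows from surjectivity, and you write out the chain $\bar{\theta}([a]\vee[b]) = \theta(a\vee b) = \theta(a)\vee\theta(b)$ where the paper leaves this implicit.
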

\begin{proof}
By Theorem~\ref{them:seven}, we have that $T$ is a Boolean inverse semigroup.
Let $\theta \colon S \rightarrow B$ be a homomorphism with the property that if $a \leq_{e} b$ then $\theta (a) = \theta (b)$.
We shall prove that there is a unique morphism $\phi \colon T \rightarrow B$ 
such that $\phi \beta = \theta$.
Let $a\, \equiv \, b$ in $S$.
Then by Lemma~\ref{lem:new-characterization}, there is $c \leq_{e} a, b$.
Thus, by assumption, $\theta (c) = \theta (a) = \theta (b)$.
In particular, $\theta (a) = \theta (b)$.
Denoting the $\equiv$-class containing the element $a$ by $[a]$ we may therefore unambiguously define
$\phi ([a]) = \theta (a)$.
It remains to prove that $\phi$ is a morphism.
Suppose that $[a] \sim [b]$ in $T$.
Then $a \sim b$ since $\equiv$ is idempotent-pure.
It follows that $\theta (a) \vee \theta (b)$ is defined.
We now use Lemma~\ref{lem:idpt-pure-dist}, to deduce that $[a \vee b] = [a] \vee [b]$.
The result now follows.
\end{proof}

We shall now reinterpret some results from \cite{Lenz}.
Let $S$ be an arbitrary inverse semigroup with zero.
Let $s \in S$ and recall that by $V_{s}$ we mean the set of all ultrafilters containing the element $s$.
Lemma~\ref{lem:dlenz} below is included for the sake of completeness.

\begin{lemma}\label{lem:dlenz} Let $S$ be an inverse semigroup with zero in which $\equiv$ is idempotent-pure.
Then $s \equiv t$ if and only if $V_{s} = V_{t}$.
\end{lemma}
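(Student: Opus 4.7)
The plan is to prove each implication separately, exploiting machinery already assembled: Lemma~\ref{lem:mars}, which tells us that tight filters absorb $\equiv$-classes (under the idempotent-pure hypothesis), and Lemma~\ref{lem:zorn}, which lets us extend non-zero elements to ultrafilters.

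For the forward direction, assume $s \equiv t$ and let $A \in V_{s}$. Since $A$ is an ultrafilter, it is a tight filter by Lemma~\ref{lem:filter-inclusions}. Because $s \in A$ and $t \equiv s$, Lemma~\ref{lem:mars}(2) (which applies precisely because $\equiv$ is idempotent-pure by hypothesis) gives $t \in A$, whence $A \in V_{t}$. The reverse containment $V_{t} \subseteq V_{s}$ is symmetric, so $V_{s} = V_{t}$.

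For the converse, assume $V_{s} = V_{t}$ and verify the defining condition of $\equiv$. Let $0 < x \leq s$. By Lemma~\ref{lem:zorn}(1), there exists an ultrafilter $A$ containing $x$. Since filters are closed upwards, $s \in A$, so $A \in V_{s} = V_{t}$ and therefore $t \in A$. As $A$ is a filter containing both $x$ and $t$, there is $c \in A$ with $c \leq x, t$; and $c \neq 0$ because $A$ is proper. Thus $c \in x^{\downarrow} \cap t^{\downarrow}$ is non-zero, as required. The symmetric condition for $0 < y \leq t$ is identical.

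I do not anticipate any real obstacle: both directions reduce to routine manipulations of filters once Lemmas~\ref{lem:mars} and~\ref{lem:zorn} are in hand. The only subtle point is remembering that Lemma~\ref{lem:mars}(2) requires the standing hypothesis that $\equiv$ is idempotent-pure, which is exactly what the theorem assumes; without this hypothesis the forward direction would fail, since tight filters would no longer be guaranteed to swallow entire $\equiv$-classes.
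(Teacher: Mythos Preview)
Your proof is correct and follows essentially the same route as the paper's own argument: both directions use Lemma~\ref{lem:filter-inclusions} and Lemma~\ref{lem:mars} for the forward implication, and the filter property of an ultrafilter containing a given non-zero element for the converse. Your version is in fact slightly more explicit than the paper's, since you cite Lemma~\ref{lem:zorn}(1) for the existence of the ultrafilter, whereas the paper simply asserts it.
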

\begin{proof} Suppose first that $s \equiv t$.
Let $A \in V_{s}$.
Thus $s \in A$ where $A$ is an ultrafilter.
But every ultrafilter is a tight filter by Lemma~\ref{lem:filter-inclusions}.
Thus by Lemma~\ref{lem:mars}, we have that $t \in A$ and so $A \in V_{t}$.
The result now follows by symmetry.
Now suppose that  $V_{s} = V_{t}$.
We prove that $s \equiv t$.
Let $0 < a \leq t$.
Let $a \in A$ where $A$ is an ultrafilter.
Then $A \in A_{t}$.
It follows that $s \in A$, by our assumption.
Now $a, s \in A$ and so there is a non-zero element $c \leq a,s$.
It now follows by symmetry that $s \equiv t$.
\end{proof}

The following will be useful to us later.

\begin{theorem}\label{them:isomorphism-etale-groupoids} 
Let $S$ be an inverse semigroup with zero on which the congruence $\equiv$ is idempotent-pure.
Then the \'etale topological groupoids $\mathsf{G}(S)$ and $\mathsf{G}(S/\equiv)$ are isomorphic.
\end{theorem}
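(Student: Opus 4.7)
The plan is to exhibit an explicit bijection between $\mathsf{G}(S)$ and $\mathsf{G}(S/{\equiv})$ using the quotient map $\beta \colon S \rightarrow S/{\equiv}$, then check that it respects both the partial product and the topology generated by the basic opens $V_{s}$.

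I would define $\phi \colon \mathsf{G}(S) \rightarrow \mathsf{G}(S/{\equiv})$ by $\phi(A) = \beta(A)^{\uparrow}$ and, in the reverse direction, $\psi \colon \mathsf{G}(S/{\equiv}) \rightarrow \mathsf{G}(S)$ by $\psi(B) = \beta^{-1}(B)$. The key preliminary observation is that every ultrafilter of $S$ is $\equiv$-saturated: by Lemma~\ref{lem:filter-inclusions} it is a tight filter, so Lemma~\ref{lem:mars}(2) applies. That $\phi(A)$ and $\psi(B)$ are proper filters is routine using the $0$-restrictedness of $\equiv$. To see that $\phi(A)$ is maximal I would reason symmetrically: if $F \supseteq \phi(A)$ is a proper filter of $S/{\equiv}$, extend $\psi(F) \supseteq A$ to an ultrafilter $A''$ of $S$ via Lemma~\ref{lem:zorn}, note that $\phi(A'') \supseteq F$ is an ultrafilter of $S/{\equiv}$, apply $\psi$ to recover $A'' = \psi(\phi(A''))$, and conclude $A = A'' = \psi(F)$, forcing $F = \phi(A)$. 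The mirror argument handles $\psi(B)$.

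The identities $\phi\psi = \mathrm{id}$ and $\psi\phi = \mathrm{id}$ then follow once we know the $\equiv$-saturation of ultrafilters: if $[a] \in \phi(A)$ then $[a] \geq [a']$ for some $a' \in A$, which by the definition of the quotient order means $a' \equiv a'a^{-1}a$ with $a'a^{-1}a \leq a$; saturation gives $a'a^{-1}a \in A$ and up-closure then gives $a \in A$. This is the step I expect to be the main obstacle, since it is the one place the structure of the quotient order (rather than just the homomorphism $\beta$) has to be unpacked. Preservation of the groupoid operations is immediate because $\beta$ is an inverse semigroup homomorphism: both $\mathbf{d}(A) = (A^{-1}A)^{\uparrow}$ and the partial product $A \cdot B = (AB)^{\uparrow}$ commute with push-forward along $\beta$.

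For the topology, I would show that $\phi(V_{s}) = V_{[s]}$ and $\phi^{-1}(V_{[s]}) = V_{s}$: from $s \in A$ we get $[s] \in \phi(A)$, and from $[s] \in B$ we get $s \in \psi(B)$. Because the families $\{V_{s}\}$ and $\{V_{[s]}\}$ are bases for the two topologies, this simultaneously gives continuity and openness, and Lemma~\ref{lem:dlenz} confirms that the assignment $[s] \mapsto V_{[s]}$ is well-defined on the quotient (different representatives $s, t$ of the same $\equiv$-class satisfy $V_{s} = V_{t}$). Thus $\phi$ is a homeomorphism and, together with the preservation of the partial product, an isomorphism of \'etale topological groupoids.
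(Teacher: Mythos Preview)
Your approach is essentially the paper's: define the maps $A \mapsto \beta(A)^{\uparrow}$ and $B \mapsto \beta^{-1}(B)$, use that ultrafilters are tight (Lemma~\ref{lem:filter-inclusions}) and hence $\equiv$-saturated (Lemma~\ref{lem:mars}), and match the basic opens $V_{s} \leftrightarrow V_{[s]}$ for the topology. Two minor corrections: in your $\psi\phi = \mathrm{id}$ step the element you want is $a(a')^{-1}a'$ (which is indeed $\leq a$), not $a'a^{-1}a$; and your maximality argument for $\phi(A)$ is unnecessarily circuitous --- once $\psi(F)$ is a proper filter containing the ultrafilter $A$ you get $\psi(F)=A$ immediately, and then $F=\phi(A)$ follows from surjectivity of $\beta$, so invoking ``$\phi(A'')$ is an ultrafilter'' (which is what you are proving) can and should be avoided.
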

\begin{proof} Put $T = S/\equiv$.
Denote by $\nu \colon S \rightarrow T$ the natural map associated with the congruence $\equiv$.
Observe first, that there is a bijection between the ultrafilters in $S$ and the ultrafilters in $T$.
We prove this now.
It is easy to check that if $A'$ is an ultrafilter in $T$, then $\nu^{-1}(A')$ is an ultrafilter in $S$,
and if $A$ is an ultrafilter in $S$, then $\nu (A)^{\uparrow}$ is an ultrafilter in $T$. 
Let $A$ and $B$ be ultrafilters in $S$ and suppose that $\nu (A)^{\uparrow} = \nu (B)^{\uparrow}$.
To prove that $A = B$ we use the fact that ultrafilters are tight filters together with Lemma~\ref{lem:filter-inclusions} and Lemma~\ref{lem:mars}.
Let $A'$ be an arbitrary ultrafilter in $T$.
Then $\nu^{-1}(A')$ is an ultrafilter in $S$.
Thus $\nu (\nu^{-1}(A'))^{\uparrow}$ is an ultrafilter in $T$.
Clearly, $A' \subseteq \nu (\nu^{-1}(A'))^{\uparrow}$.
But $A'$ is an ultrafilter.
Thus  
$A' = \nu (\nu^{-1}(A'))^{\uparrow}$.
Thus, we have established our bijection.
It is now routine to check that this bijection is an isomorphism of groupoids
and that it is a homeomorphism of topological groupoids.
\end{proof}

\subsection{The structure of the Lenz congruence on $D_{n}$}

We shall apply the results of the previous section to the distributive inverse semigroup $D_{n}$.
The following is key to doing this.

\begin{lemma}\label{lem:panda} Let $S$ be a distributive inverse semigroup.
Let $\mathcal{B}$ be a subset of $S$ having the following properties:
\begin{enumerate}
\item Each element of $S$ is a finite join of elements from $\mathcal{B}$.
\item If $a \leq \bigvee_{i=1}^{m} a_{i}$ where $a,a_{i} \in \mathcal{B}$ then $a \leq a_{i}$ for some $i$. 
\item If $a \leq b$, where $a,b \in \mathcal{B}$ and $a$ is a non-zero idempotent, then
$b$ is an idempotent.
\end{enumerate}
Then $\equiv$ is idempotent-pure on $S$.
\end{lemma}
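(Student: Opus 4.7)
The plan is to show that if $e \equiv a$ with $e$ an idempotent of $S$, then $a$ is an idempotent. We may assume $a \neq 0$, since otherwise there is nothing to prove. Using property (1), first I would write $a = \bigvee_{i=1}^{m} a_{i}$ with each $a_{i} \in \mathcal{B}$, discarding any zero summands so that each $a_{i}$ is non-zero. The goal then reduces to showing that each $a_{i}$ is an idempotent: a finite join of compatible idempotents in a distributive inverse semigroup is again an idempotent, since $a^{2} = \bigvee_{i,j} a_{i}a_{j}$, each $a_{i}a_{j}$ is an idempotent with $a_{i}a_{j} \leq a_{i} \leq a$, giving $a^{2} \leq a$, while taking $j = i$ yields $a_{i} = a_{i}^{2} \leq a^{2}$, so $a \leq a^{2}$.

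To show $a_{i}$ is idempotent, I would use the definition of $\equiv$: because $a \equiv e$ and $0 < a_{i} \leq a$, there exists a non-zero element $c_{i}$ with $c_{i} \leq a_{i}$ and $c_{i} \leq e$. Since $c_{i} \leq e$ and the idempotents of any inverse semigroup form an order ideal, $c_{i}$ is itself an idempotent. This produces a non-zero idempotent below $a_{i}$, but $c_{i}$ is not a priori in $\mathcal{B}$, so we cannot yet apply hypothesis (3).

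To remedy this, I would apply property (1) a second time, decomposing $c_{i} = \bigvee_{j} b_{ij}$ with each $b_{ij} \in \mathcal{B}$. Each $b_{ij} \leq c_{i}$, so each $b_{ij}$ is an idempotent (again using that idempotents form an order ideal), and because $c_{i} \neq 0$ at least one $b_{ij}$ is non-zero. Now $b_{ij} \leq c_{i} \leq a_{i}$ with both $b_{ij}$ and $a_{i}$ in $\mathcal{B}$ and $b_{ij}$ a non-zero idempotent, so property (3) forces $a_{i}$ to be an idempotent. This completes the argument.

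The only mild subtlety is bridging the gap between the Lenz witness $c_{i}$, which is merely an arbitrary non-zero element of $S$ below $a_{i}$ and $e$, and the requirement in (3) that both parties in the inequality lie in $\mathcal{B}$. The trick is to use property (1) twice: once to shatter $a$ into pieces from $\mathcal{B}$, and then to shatter the witness $c_{i}$ into pieces from $\mathcal{B}$; this second descent is what enables (3) to fire. Everything else is routine once $\mathcal{B}$-membership has been arranged on both sides of the inequality.
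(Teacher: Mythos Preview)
Your proof is correct and follows essentially the same route as the paper's: write $a$ as a join of elements $a_{i} \in \mathcal{B}$, use the definition of $\equiv$ to produce a non-zero witness below both $a_{i}$ and $e$, decompose that witness again via property~(1) to land in $\mathcal{B}$, and then invoke property~(3). The paper handles the final step (``a join of compatible idempotents is an idempotent'') tacitly, whereas you spell it out, but the argument is the same.
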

\begin{proof} Suppose that $a \equiv e$ where $e$ is an idempotent.
Then we may write $a = \bigvee_{i=1}^{p} a_{i}$, where $a_{i} \in \mathcal{B}$ by property (1).
Thus $\left(    \bigvee_{i=1}^{p} a_{i}  \right) \equiv e$.
For each $i$, we have that $a_{i}$ is less than or equal to the lefthand side and is non-zero.
Thus, there is a non-zero element $z$ in $S$ such that $z \leq a_{i}, e$.
Now, $z$ will be a join of elements of $\mathcal{B}$ by property (1) again.
Thus we may assume that there exists  $0 \neq c \in \mathcal{B}$ where $c \leq a_{i}, e$.
But $c \leq e$ implies that $c$ is an idempotent and so an idempotent in $\mathcal{B}$.
Thus $a_{i}$ is an idempotent in $\mathcal{B}$ by property (3).
It follows that $a$ is itself an idempotent, as required.
\end{proof}

In the case of the distributive inverse monoid $D_{n}$ the inverse submonoid $P_{n}$ fulfills the r\^ole of $\mathcal{B}$ above
in Lemma~\ref{lem:panda} by Proposition~\ref{prop:skyr}.
We therefore have the following.

\begin{corollary}\label{cor:idpt-pure} 
The congruence $\equiv$ is idempotent-pure on $D_{n}$.
\end{corollary}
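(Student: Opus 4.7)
The plan is to apply Lemma~\ref{lem:panda} directly, taking $S = D_{n}$ and choosing $\mathcal{B} = P_{n}$ sitting inside $D_{n}$ via the embedding $\iota$. The whole proof then reduces to checking the three hypotheses of that lemma, and these are already packaged for us: Proposition~\ref{prop:skyr} states exactly that (1) every element of $D_{n}$ is a finite join of elements of $P_{n}$, (2) if an element of $P_{n}$ is below a finite join of elements of $P_{n}$ then it is below one of them, and (3) $P_{n}$ is $E^{\ast}$-unitary.

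Concretely, I would open with one sentence fixing $\mathcal{B} := P_{n} \subseteq D_{n}$, then cite Proposition~\ref{prop:skyr} to observe that conditions (1)--(3) of Lemma~\ref{lem:panda} are satisfied. Applying Lemma~\ref{lem:panda} then yields that $\equiv$ is idempotent-pure on $D_{n}$.

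There is essentially no obstacle: the substantive content has already been done in Proposition~\ref{prop:skyr} (which packages the structure of $D_{n}$ as the distributive completion of the $E^{\ast}$-unitary inverse monoid $P_{n}$) and in Lemma~\ref{lem:panda} (which extracts idempotent-purity of $\equiv$ from exactly these kinds of hypotheses). The only thing to be careful about is that the three conditions in Proposition~\ref{prop:skyr} match verbatim the three hypotheses of Lemma~\ref{lem:panda}, which they do. The corollary is therefore a one-line deduction and serves mainly as a bookkeeping step, making $\equiv$ available as an idempotent-pure congruence on $D_{n}$ so that Lemma~\ref{lem:idpt-pure-dist}, Theorem~\ref{them:uniqueness} and Theorem~\ref{them:seven} can subsequently be invoked when constructing $C_{n} = D_{n}/\!\equiv$.
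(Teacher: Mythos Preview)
Your proposal is correct and matches the paper's own argument exactly: the paper states that $P_{n}$ plays the r\^ole of $\mathcal{B}$ in Lemma~\ref{lem:panda} by virtue of Proposition~\ref{prop:skyr}, and the corollary follows immediately.
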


By Lemma~\ref{lem:dist-inv}, Corollary~\ref{cor:idpt-pure}, and Lemma~\ref{lem:idpt-pure-dist}, we therefore have the following.

\begin{lemma}\label{lem:skype}
The inverse monoid $D_{n}/\equiv$ is a distributive $\wedge$-monoid.
\end{lemma}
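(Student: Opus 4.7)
The plan is to read off this lemma directly from the three ingredients that have just been assembled, so the proof will be very short and largely a matter of bookkeeping rather than argument.

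First I would invoke Lemma~\ref{lem:dist-inv} to record that $D_{n}$ is a distributive inverse $\wedge$-monoid. Next I would invoke Corollary~\ref{cor:idpt-pure} to record that the Lenz congruence $\equiv$ is idempotent-pure on $D_{n}$. At this point, the hypotheses of Lemma~\ref{lem:idpt-pure-dist} are exactly satisfied, so I would apply it with $S = D_{n}$ and $\rho = {\equiv}$ to conclude that $D_{n}/\equiv$ is a distributive inverse semigroup and, because $D_{n}$ is additionally a $\wedge$-semigroup, that $D_{n}/\equiv$ is itself a $\wedge$-semigroup, with the quotient map preserving meets.

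The only point left to mention is that $D_{n}/\equiv$ is a monoid (not merely a semigroup): since $D_{n}$ has an identity $1$ and $\equiv$ is a semigroup congruence, the class $[1]$ serves as an identity of the quotient, so $D_{n}/\equiv$ is indeed a distributive inverse $\wedge$-monoid, as required.

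There is no real obstacle here, because all the substantive work was carried out in the preceding subsection: idempotent-purity of $\equiv$ on $D_{n}$ was the genuinely delicate step (handled via Lemma~\ref{lem:panda} and Proposition~\ref{prop:skyr}), and the passage from idempotent-pure quotients to distributive $\wedge$-structure was packaged as Lemma~\ref{lem:idpt-pure-dist}. The present lemma is therefore best presented as a one-line corollary of these three earlier results, with the appropriate citations.
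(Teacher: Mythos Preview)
Your proposal is correct and matches the paper's own argument exactly: the paper simply states that the lemma follows by Lemma~\ref{lem:dist-inv}, Corollary~\ref{cor:idpt-pure}, and Lemma~\ref{lem:idpt-pure-dist}, which is precisely the chain of citations you describe. Your additional remark about the identity passing to the quotient is a harmless clarification that the paper leaves implicit.
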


The proof of the following can be deduced using the proof of \cite[Lemma~9.7]{LV2020a}.

\begin{lemma}\label{lem:red-panda} Let $T$ be an inverse monoid in which $\equiv$ is idempotent-pure
and suppose that $T^{e}$ is $E$-unitary.
Then the group of units of $T/\equiv$ is isomorphic to $T^{e}/\sigma$.
\end{lemma}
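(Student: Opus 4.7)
The plan is to define a map $\phi \colon T^{e} \rightarrow \mathsf{U}(T/\equiv)$ by $a \mapsto [a]$, check it is a monoid homomorphism onto the group of units, and identify its kernel with $\sigma$, obtaining the desired isomorphism $T^{e}/\sigma \cong \mathsf{U}(T/\equiv)$.

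First, I would verify that $\phi$ takes values in $\mathsf{U}(T/\equiv)$. For $a \in T^{e}$, the idempotent $\mathbf{d}(a) = a^{-1}a$ is essential, so by Lemma~\ref{lem:essential-idempotents} it satisfies $\mathbf{d}(a) \leq_{e} 1$, and hence $a^{-1}a \equiv 1$; symmetrically $aa^{-1} \equiv 1$, so $[a]$ is invertible in $T/\equiv$. That $\phi$ is a monoid homomorphism is immediate from the definition of the quotient.

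Next, I would show $\phi$ factors through the minimum group congruence on $T^{e}$. If $a \, \sigma \, b$ in $T^{e}$, there is some $c \in T^{e}$ with $c \leq a, b$. Since $\mathbf{d}(c)$ is essential, for every non-zero idempotent $f \leq \mathbf{d}(a)$ one has $\mathbf{d}(c)f \neq 0$, showing $\mathbf{d}(c) \leq_{e} \mathbf{d}(a)$; Lemma~\ref{lem:needed} then upgrades this to $c \leq_{e} a$, whence $c \equiv a$. By symmetry $c \equiv b$, so $[a] = [c] = [b]$. The $E$-unitarity of $T^{e}$ is what ensures this ``common lower bound'' description genuinely captures $\sigma$, so $\phi$ really does descend to $T^{e}/\sigma$. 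Surjectivity is then immediate: if $[a] \in \mathsf{U}(T/\equiv)$ then $a^{-1}a \equiv 1$, and since $\equiv$ is idempotent-pure, Lemma~\ref{lem:important-tool} yields $a^{-1}a = a^{-1}a \wedge 1 \leq_{e} 1$, so $a^{-1}a$ is essential; the same argument applies to $aa^{-1}$, placing $a$ in $T^{e}$.

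The crux is injectivity. Assume $a, b \in T^{e}$ with $[a] = [b]$, i.e.\ $a \equiv b$. By Lemma~\ref{lem:important-tool} the meet $c = a \wedge b$ exists and is essential in both, and by Lemma~\ref{lem:compatible-meets} one has $\mathbf{d}(c) = \mathbf{d}(a)\mathbf{d}(b)$. The key calculation --- and the main obstacle --- is checking that this product of essential idempotents is again essential: given a non-zero idempotent $f$, essentiality of $\mathbf{d}(a)$ gives $\mathbf{d}(a)f \neq 0$, and then essentiality of $\mathbf{d}(b)$ gives $\mathbf{d}(b)(\mathbf{d}(a)f) \neq 0$. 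The analogous statement for $\mathbf{r}(c)$ follows symmetrically, so $c \in T^{e}$ is a common lower bound of $a$ and $b$ inside $T^{e}$, yielding $a \, \sigma \, b$ there and completing the argument.
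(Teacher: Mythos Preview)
Your proof is correct, and since the paper does not give its own argument here (it simply cites \cite[Lemma~9.7]{LV2020a}), there is no genuine comparison to make. Your approach---building the homomorphism $T^{e}\to\mathsf{U}(T/\equiv)$, checking surjectivity via Lemma~\ref{lem:important-tool}, and identifying the kernel as $\sigma$ by showing $a\wedge b$ lies in $T^{e}$---is the natural one and goes through cleanly.

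One small remark: your aside that ``the $E$-unitarity of $T^{e}$ is what ensures this `common lower bound' description genuinely captures $\sigma$'' is misplaced. The relation $a\,\sigma\,b \iff \exists\,c\le a,b$ is the definition of the minimum group congruence on any inverse semigroup, and your argument that $\phi$ factors through it nowhere uses $E$-unitarity. Likewise your injectivity step (products of essential idempotents are essential, so $a\wedge b\in T^{e}$) does not invoke it either. So as written, your proof never actually uses the hypothesis that $T^{e}$ is $E$-unitary; either the hypothesis is redundant, or it is an artefact of how the cited reference structures the argument. This does not affect the validity of what you wrote.
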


We can give a much more concrete description of the congruence $\equiv$ defined on $D_{n}$.
See Lemma~\ref{lem:carets} for the meaning of {\em caret expansion}.

\begin{theorem}\label{them:carets-more} 
Let $f = \bigvee_{i} y_{i}x_{i}^{-1}$ and $g = \bigvee_{j} v_{j}u_{j}^{-1}$ in the monoid $D_{n}$.
Then $f \, \equiv \, g$ if and only if there is a prefix code $Z$ such that $Z$ is a caret extension of both $X$ and $U$  such that
$f \left( \bigvee_{z \in Z} zz^{-1} \right) = g \left( \bigvee_{z \in Z} zz^{-1} \right)$.
\end{theorem}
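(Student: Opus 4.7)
The plan is to combine the abstract characterization of the Lenz congruence with the combinatorial characterization of essential containment given by Proposition~\ref{prop:cummings}. Since $\equiv$ is idempotent-pure on $D_n$ by Corollary~\ref{cor:idpt-pure}, Lemma~\ref{lem:new-characterization} lets me replace the relation $f \equiv g$ by the existence of a common essential lower bound $c \leq_{e} f,g$, and Lemma~\ref{lem:needed} then pushes everything down to idempotents: any such $c$ satisfies $\mathbf{d}(c) \leq_{e} \mathbf{d}(f)$ and $\mathbf{d}(c) \leq_{e} \mathbf{d}(g)$, with $c = f\mathbf{d}(c) = g\mathbf{d}(c)$. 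Writing $\mathbf{d}(f) = 1_X$ and $\mathbf{d}(g) = 1_U$, the task reduces to understanding common essential idempotent lower bounds of $1_X$ and $1_U$ inside $D_n$.

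For the forward direction, starting from such a $c$, every idempotent of $D_n$ is the identity on some finitely generated right ideal, and by Lemma~\ref{lem:colson} and Lemma~\ref{lem:orthogonality} I may write $\mathbf{d}(c) = 1_Z$ for a uniquely determined prefix code $Z$. Then $1_Z \leq_{e} 1_X$ and $1_Z \leq_{e} 1_U$, and Proposition~\ref{prop:cummings} translates each of these into the assertion that $Z$ is a caret extension of the corresponding prefix code. The equalities $c = f\,1_Z$ and $c = g\,1_Z$ then deliver
$$f\Bigl(\bigvee_{z \in Z} zz^{-1}\Bigr) \; = \; g\Bigl(\bigvee_{z \in Z} zz^{-1}\Bigr),$$
which is the required equation.

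Conversely, given a prefix code $Z$ as in the statement, I would set $c = f\,1_Z = g\,1_Z$. Proposition~\ref{prop:cummings} applied to the hypothesis gives $1_Z \leq_{e} 1_X = \mathbf{d}(f)$, and since $Z$ is a caret extension of $X$ we have $1_Z \leq 1_X$, so $\mathbf{d}(c) = 1_X \cdot 1_Z = 1_Z$; thus $\mathbf{d}(c) \leq_{e} \mathbf{d}(f)$, and Lemma~\ref{lem:needed} yields $c \leq_{e} f$. Symmetrically $c \leq_{e} g$, and Lemma~\ref{lem:new-characterization} then gives $f \equiv g$. The main obstacle is the forward step of realizing the witness idempotent $\mathbf{d}(c)$ as $1_Z$ for a \emph{single} prefix code $Z$ that simultaneously witnesses both essential relations; this is exactly the content that Proposition~\ref{prop:cummings} is designed to supply, so once that proposition is in hand the remainder of the argument is bookkeeping.
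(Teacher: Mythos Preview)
Your proposal is correct and follows essentially the same route as the paper: invoke Lemma~\ref{lem:new-characterization} to pass to a common essential lower bound, use Lemma~\ref{lem:needed} to reduce to the domains, and then apply Proposition~\ref{prop:cummings} to obtain the caret-extension description of the witness prefix code $Z$. You are in fact more explicit than the paper, which only writes out the forward direction and leaves the converse implicit; your treatment of the converse (setting $c = f\,1_Z = g\,1_Z$ and running Lemma~\ref{lem:needed} backwards) is exactly what is needed.
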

\begin{proof} Suppose that $f \, \equiv \, g$.
Then by Lemma~\ref{lem:new-characterization},
there exists $h \leq_{e} f,g$.
By Lemma~\ref{lem:needed}, we have that $h \leq_{e} f$ if and only if $\mathbf{d}(h) \leq_{e} \mathbf{d}(f)$ and $\mathbf{d}(h) \leq_{e} \mathbf{d}(g)$.
We now use Proposition~\ref{prop:cummings}, the prefix code corresponding to  $\mathbf{d}(h)$ is a caret expansion
of the prefix codes associated with both  $\mathbf{d}(f)$ and $\mathbf{d}(g)$.
The result now follows since $h\mathbf{d}(f) = h \mathbf{d}(g)$.
\end{proof}

We shall paraphrase the above theorem but need some terminology first.
Let $f = \bigvee_{i} y_{i}x_{i}^{-1}$.
Relabelling if necessary, we can write
$f =  y_{1}x_{1}^{-1} \vee \left( \bigvee_{i \neq 1} y_{i}x_{i}^{-1} \right)$.
Now consider the element
$g =  \left( \bigvee_{j=1}^{n}y_{1}a_{j}(x_{1}a_{j})^{-1}  \right) \vee \left( \bigvee_{i \neq 1} y_{i}x_{i}^{-1} \right)$.
We say that $g$ is obtained from $f$ by {\em inserting a caret} and that $f$ is obtained from $g$ by {\em deleting a caret}.
Clearly, $g \leq_{e} f$.
We may therefore rephrase Theorem~\ref{them:carets-more} using the above notions.

\begin{theorem}\label{them:carets-refine}
Let $f = \bigvee_{i} y_{i}x_{i}^{-1}$ and $g = \bigvee_{j} v_{j}u_{j}^{-1}$ in the monoid $D_{n}$.
Then $f \, \equiv \, g$ if and only if $g$ is obtained from $f$ by a finite sequence of inserting and deleting carets.
\end{theorem}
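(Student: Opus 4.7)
The plan is to reduce this statement to Theorem~\ref{them:carets-more} by translating the combinatorial operation of a caret expansion on a prefix code into the algebraic operation of inserting a caret into an element of $D_{n}$, and conversely.

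First I would dispose of the easy direction. If $g$ is obtained from $f$ by inserting a single caret, then by construction the domain prefix code of $g$ is a one-step caret expansion of that of $f$, so $\mathbf{d}(g) \leq_{e} \mathbf{d}(f)$ by Proposition~\ref{prop:cummings}, and hence $g \leq_{e} f$ by Lemma~\ref{lem:needed}. Since $a \leq_{e} b$ implies $a \equiv b$, a single insertion produces a $\equiv$-related element, and by symmetry the same holds for deletions. A straightforward induction on the length of the sequence of operations, using transitivity of $\equiv$, then gives $f \equiv g$.

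For the nontrivial direction, assume $f \equiv g$ and invoke Theorem~\ref{them:carets-more} to obtain a prefix code $Z$ that is a caret extension of both $X = \{x_i\}$ and $U = \{u_j\}$ and satisfies $f \bigl( \bigvee_{z \in Z} zz^{-1} \bigr) = g \bigl( \bigvee_{z \in Z} zz^{-1} \bigr)$. By the structure theorem for maximal prefix codes applied locally (Theorem~\ref{them:max-pc}), $Z$ is obtained from $X$ by a finite sequence of caret expansions. I would lift each such expansion to a caret insertion on $f$: an expansion at $x_i$ replaces the summand $y_i x_i^{-1}$ by $\bigvee_{k=1}^{n} y_i a_k (x_i a_k)^{-1}$, which is exactly the caret insertion as defined before the statement. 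Let $f'$ be the result of performing these insertions on $f$, and define $g'$ analogously from $g$ using the expansions from $U$ to $Z$. Then $\mathbf{d}(f') = \mathbf{d}(g') = ZA_{n}^{\ast}$, and since each insertion only restricts a summand to an essential sub-idempotent, $f' = f \bigl( \bigvee_{z \in Z} zz^{-1} \bigr)$ and $g' = g \bigl( \bigvee_{z \in Z} zz^{-1} \bigr)$; hence $f' = g'$ by Theorem~\ref{them:carets-more}. Reversing the caret insertions that took $g$ to $g'$ (that is, performing the corresponding caret deletions) now exhibits a finite sequence of insertions and deletions transforming $f$ into $g$.

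The main technical point, and the place I expect the most bookkeeping, is the lifting step: verifying that a single caret expansion on the domain prefix code lifts to a well-defined caret insertion on the element, i.e.\ that the range side is also a caret expansion (which follows from the morphism property, since $y_i x_i^{-1}$ restricted to $x_i a_k A_{n}^{\ast}$ equals $y_i a_k (x_i a_k)^{-1}$) and that the resulting expression remains an orthogonal join. Once this is confirmed, the argument becomes a direct translation of Theorem~\ref{them:carets-more} from the level of prefix codes to the level of elements of $D_{n}$.
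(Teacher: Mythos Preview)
Your proposal is correct and follows exactly the line the paper intends: the paper presents Theorem~\ref{them:carets-refine} as a direct rephrasing of Theorem~\ref{them:carets-more} using the caret-insertion language introduced immediately before, and your argument simply spells out this translation in detail. The only thing worth noting is that the paper itself gives no separate proof here, so your write-up is more explicit than the paper's, but the content is identical.
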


\subsection{Handling right-infinite strings}

The material in this section is fundamental to our main theorem proved in the next section.
The proof of the following is easy.

\begin{lemma}\label{lem:tea} Let $A_{n}$ be a finite alphabet where $n \geq 2$.
Then for finite strings $x$ and $y$ we have that
$xA_{n}^{\omega} = yA_{n}^{\omega}$ if and only if $x = y$.
\end{lemma}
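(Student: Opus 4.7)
The plan is to split the biconditional into its two implications. The direction $x = y \Rightarrow xA_{n}^{\omega} = yA_{n}^{\omega}$ is immediate from the definition and requires no real work.

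For the non-trivial direction, I would start from the assumption $xA_{n}^{\omega} = yA_{n}^{\omega}$ and pick any single witness $\alpha \in A_{n}^{\omega}$ (which exists because the alphabet is non-empty). Then $x\alpha \in yA_{n}^{\omega}$, so there is some $\beta \in A_{n}^{\omega}$ with $x\alpha = y\beta$. Comparing the two expressions for this single right-infinite string letter by letter forces one of the finite strings $x,y$ to be a prefix of the other; this is the basic prefix dichotomy for infinite words (and is really the same observation already used in the paper for finite words, applied here to the infinite string $x\alpha = y\beta$).

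Without loss of generality, suppose $y = xz$ for some finite string $z$. The goal is now to show $z = \varepsilon$. Here is where the hypothesis $n \geq 2$ becomes essential, and it is the only place it is needed. If $z$ were non-empty with first letter $a$, I would invoke $n \geq 2$ to pick a letter $b \in A_{n}$ with $b \neq a$. Then for any $\gamma \in A_{n}^{\omega}$ the string $xb\gamma$ lies in $xA_{n}^{\omega}$, but every element of $yA_{n}^{\omega} = xzA_{n}^{\omega}$ has $xa$ as a prefix, so $xb\gamma \notin yA_{n}^{\omega}$, contradicting the assumed equality. Hence $z = \varepsilon$ and $x = y$.

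There is no real obstacle in this proof; the only subtlety worth flagging is the role of $n \geq 2$, which is exactly what prevents the degenerate case where $A_{n}^{\omega}$ consists of a single string and $xA_{n}^{\omega}$ would fail to distinguish among different finite strings $x$. The structure mirrors the finite-string argument recalled in the introduction but applied after evaluating on a single infinite witness.
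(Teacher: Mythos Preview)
Your argument is correct. The paper does not actually supply a proof of this lemma---it merely states that ``The proof of the following is easy''---so there is nothing to compare against, but your proof is a clean and complete realization of the intended routine argument, including the correct identification of where the hypothesis $n \geq 2$ is used.
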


It is convenient to handle elements of $A_{n}^{\omega}$ using special subsets of $A_{n}^{\ast}$:
\begin{itemize}
\item A subset $\mathscr{C} \subseteq A_{n}^{\ast}$ is called {\em good} if each pair of elements in $\mathscr{C}$
is prefix comparable and for every $m \in \mathbb{N}$ there exists $x \in \mathscr{C}$ such that $\mid x \mid = m$.
It is easy to check that there is a bijection between the elements of $A_{n}^{\omega}$ and the set of good subsets.
See, also, \cite{PP}.

\item A subset $X$ of $A_{n}^{\ast}$ is called a {\em filter} if it satisfies the following two conditions:
\begin{enumerate}
\item If $x,y \in A$ then there exist $u,v \in A_{n}^{\ast}$ such that $xu = yv \in A$; thus not only are $x$ and $y$ prefix comparable
but some right multiples belong to $A$.
\item If $x \in A$ and $x = yz$ then $y \in A$.
\end{enumerate}
A {\em maximal filter} is a filter that is not properly contained in another filter.

\item A subset $\{a_{1}, \ldots, a_{m} \} \subseteq aA_{n}^{\ast}$ is called a {\em tight cover} of $a$ if
whenever $z \in aA_{n}^{\ast}$ then there exists an $i$ such that $zr = a_{i}s$ for some $s,r \in A_{n}^{\ast}$.
A filter $A$ is said to be {\em tight} if $a \in A$ and $\{a_{1}, \ldots, a_{m} \}$ a tight cover of $a$ implies that $a_{i} \in A$
for some $i$.

\end{itemize}

\begin{example}\label{ex:finite-filter}
{\em Let $x$ be a string.
Denote by $\mathsf{Pref}(x)$ the set of all prefixes of $x$;
this set contains both $x$ and the empty string.
Then 
$\mathsf{Pref}(x)$
is a filter.
Clearly, it is finite and every finite filter is of this form.
}
\end{example}

Example~\ref{ex:finite-filter} above characterizes the finite filters.

\begin{proposition}\label{prop:tight-filters} In a free monoid, the following three classes of subsets are the same:
\begin{enumerate}
\item Good subsets.
\item Tight filters.
\item Maximal filters.
\end{enumerate}
\end{proposition}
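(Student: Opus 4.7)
The plan is to establish the cycle of implications $(1) \Rightarrow (2) \Rightarrow (3) \Rightarrow (1)$. Before starting, I would record two elementary facts that will be used repeatedly: any nonempty filter contains $\varepsilon$ (by downward closure applied to the decomposition $x = \varepsilon \cdot x$), and any two elements of a filter are prefix comparable (their common right multiple in the filter forces this).

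For $(1) \Rightarrow (2)$, I would first verify that every good subset $\mathscr{C}$ is a filter. The common-extension axiom follows by picking an element of $\mathscr{C}$ of length exceeding both $|x|$ and $|y|$; prefix comparability then makes it a common right multiple. Downward closure: if $x \in \mathscr{C}$ and $x = yz$, the unique member of $\mathscr{C}$ of length $|y|$ (which exists by length-completeness) is prefix comparable with $x$, so it must be the length-$|y|$ prefix of $x$, namely $y$. For tightness, given $a \in \mathscr{C}$ and a tight cover $\{a_1,\ldots,a_m\}$ of $a$, I would choose $c \in \mathscr{C}$ with $|c| \geq \max_i |a_i|$; since $a$ is a prefix of $c$ we have $c \in aA_n^{\ast}$, so the tight-cover condition yields $cr = a_i s$ for some $i$; then $a_i$ is a prefix of $c$ on length grounds and therefore coincides with the length-$|a_i|$ member of $\mathscr{C}$.

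For $(2) \Rightarrow (3)$, the key observation is that for each $k$, the set $A_n^k$ of all length-$k$ strings is a tight cover of $\varepsilon$ in the sense defined just above: any $z \in A_n^{\ast}$ is either long enough that its length-$k$ prefix lies in $A_n^k$, or short enough to extend into $A_n^k$. Since $\varepsilon$ belongs to any nonempty tight filter $A$, tightness supplies an element $a_k \in A$ of each length $k$. Now if $A \subseteq B$ with $B$ a filter and $b \in B$ has length $k$, the filter axiom on $B$ forces $b$ and $a_k$ to be prefix comparable; equal length then gives $b = a_k \in A$, so $B = A$ and $A$ is maximal.

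For $(3) \Rightarrow (1)$, suppose $A$ is a maximal filter. Pairwise prefix comparability is immediate. If $A$ had bounded length, it would be finite, and by Example~\ref{ex:finite-filter} its longest element $x$ would satisfy $A = \mathsf{Pref}(x)$; but then $\mathsf{Pref}(xa)$ is a strictly larger filter for any letter $a \in A_n$, contradicting maximality. Thus $A$ contains elements of unbounded length, and downward closure produces a member of every length, so $A$ is good. The only nontrivial step in the entire argument is verifying the tight-cover property of $A_n^k$ under the free-monoid definition (distinct from the inverse-semigroup version of earlier sections); everything else reduces to routine length comparisons in $A_n^{\ast}$.
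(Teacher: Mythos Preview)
Your proof is correct and uses essentially the same ideas as the paper's own argument: the filter and downward-closure verifications for good subsets, the use of length-$k$ strings as a tight cover to force a tight filter to contain elements of every length, and the $\mathsf{Pref}(x) \subsetneq \mathsf{Pref}(xa)$ contradiction for maximality. The only structural difference is that you run a cycle $(1)\Rightarrow(2)\Rightarrow(3)\Rightarrow(1)$, whereas the paper proves the two biconditionals $(1)\Leftrightarrow(2)$ and $(1)\Leftrightarrow(3)$ separately; in particular, your step $(2)\Rightarrow(3)$ folds together the paper's ``tight filter $\Rightarrow$ good'' and ``good $\Rightarrow$ maximal'' by observing directly that once a tight filter contains an element of every length, any enlarging filter must agree with it length by length. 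Your choice to apply the tight-cover argument at $\varepsilon$ (using $A_n^k$) rather than at an arbitrary $a\in A$ (using $aA_n^{m-|a|}$ as the paper does) is a mild simplification but not a genuinely different idea.
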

\begin{proof} {\em Every good subset is a filter.}
Let $A$ be a good subset.
Let $x,y \in A$.
By definition, they are prefix comparable and so there exist strings $u$ and $v$ such that $z = xu = yv$.
Let $z' \in A$ be a string of length $\mid z \mid$
which exists since $A$ is a good subset.
Then since $z'$ is prefix comparable with all elements of $A$ and on the basis of length considerations
there are strings $u'$ and $v'$ such that $z' = xu' = yv'$.
This verifies the first condition in the definition of a filter.
Now, let $x \in A$ where $x = yz$.
Then, since $A$ is a good subset there exist strings $y'$ and $z'$ such that $x = y'z'$
where $\mid y' \mid = \mid y \mid$.
It follows that $y = y'$ and so all prefixes of $x$ belong to $A$
and we have verified the second condition for a filter.
We have therefore proved that every good subset is a filter.

{\em Every good subset is a tight filter.}
Let $A$ be a good subset.
Let $\{a_{1}, \ldots, a_{m} \} \subseteq aA_{n}^{\ast}$ be a tight cover of $a$ where $a \in A$.
We prove that $a_{i} \in A$ for some $i$.
Put $m$ equal to the maximum length of the elements $a_{1},\ldots, a_{m}$.
Since $A$ is a good subset there exists a unique $z \in A$ such that $\mid z \mid = m$.
But $a \in A$ and the length of $z$ is greater than or equal to the length of $a$
since all the $a_{i}$ have lengths no bigger than that of $a$.
It follows that $z = as$ for some $s \in A_{n}^{\ast}$.
But $\{a_{1}, \ldots, a_{m} \}$ is a tight cover of $a$ and so $zq = a_{i}r$ for some $i$ and $q,r \in A_{n}^{\ast}$.
By the choice of $z$, we know that $a_{i}$ is a prefix of $z$.
Using the fact that $A$ is a filter we deduce that $a_{i} \in A$, as required.
We have therefore proved that every good subset is a tight filter.

{\em Every tight filter is a good subset}.
Let $A \subseteq A_{n}^{\ast}$ be a tight filter.
By the definition of a filter, elements are pairwise prefix comparable.
We therefore have only to prove the following.
Let $m \in \mathbb{N}$ be arbitrary.
We shall construct an element $b \in A$ such that $\mid b \mid = m$.
Let $a \in A$ be arbitrary.
Put $n$ equal to the larger of $\mid a \mid$ and $m$.
We deal with the trivial case first.
Suppose that $\mid a \mid \geq m$.
Then $a$ has a prefix of size $m$ that belongs to $A$ and we are done.
In what follows, we therefore assume that $m > \mid a \mid$.
Let $p_{1}, \ldots, p_{s}$ be all the strings  of length $m \, - \mid a \mid$.
Then $ap_{1}, \ldots, ap_{s}$ are all the strings  of length $m$
beginning with $a$.
We shall prove in a moment that $\{ap_{1}, \ldots, ap_{s} \}$ is a tight cover of $a$.
Granted this, 
since $A$ is a tight filter, we must have that $ap_{i} \in A$ for some $i$ and we are done.
We now prove that 
$\{ap_{1}, \ldots, ap_{s} \}$ is a tight cover of $a$.
Let $z = ap$ for some $p \in A_{n}^{\ast}$.
There are two cases to consider.
Suppose first that $\mid p \mid \leq m \, - \mid a \mid$.
Then choose $u \in A_{n}^{\ast}$ such that $pu$ has length $m \, - \mid a \mid$.
Then $pu = p_{k}$ for some string $v$.
Thus $apu = ap_{k}$ and so $zu = ap_{k}$ and we are done.
Now, suppose that $\mid p \mid >  m\, - \mid a \mid$.
Then we can write $p = p_{k}u$ for some $k$ and some string $u$.
It follows that $ap = ap_{k}u$ and so $z = ap_{k}u$ and we are done.

{\em Every good subset is a maximal filter.}
We have proved that it is a filter so it is enough to prove that it is maximal.
Let $A$ be a good subset.
Suppose that $A \subseteq B$ where $B$ is a filter.
Let $x \in B$.
Then there exists $y \in A$ such that $\mid x \mid = \mid y \mid$.
But $x$ and $y$ are prefix comparable and so $x = y$.
Thus $B = A$.

{\em Every maximal filter is a good subset.}
Let $A$ be a maximal filter.
Since it is a filter its elements are pairwise prefix comparable.
Suppose that $A$ contain a string  $x$ of maximum length.
Then all elements of $A$ are prefixes of $x$;
that is, $A = \mathsf{Pref}(x)$.
Let $y$ be any string that has $x$ as a proper prefix.
Then $A = \mathsf{Pref}(x) \subseteq \mathsf{Pref}(y)$
is properly contained, which contradicts the fact that $A$ is a maximal fliter.
It follows that there is no bound on the lengths of elements of $A$ from which
we easily deduce that $A$ is a good subset.
\end{proof}

\subsection{The Boolean inverse monoid $C_{n}$}

By Lemma~\ref{lem:red-panda}, Proposition~\ref{prop:finverse} and Lemma~\ref{lem:skype},
the group of units of $D_{n}/\equiv$ is isomorphic to the Thompson group $G_{n,1}$.
It remains only to show that  $D_{n}/\equiv$ is Boolean.

\begin{proposition}\label{prop:tight-ultra}
In the inverse monoid $D_{n}$ every tight filter is an ultrafilter.
\end{proposition}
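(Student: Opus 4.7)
The plan is to apply Proposition~\ref{prop:idpts-semigroup} to reduce to showing that every tight filter in $\mathsf{E}(D_{n})$ is an ultrafilter, and then to identify such filters with the good subsets (equivalently maximal filters) of $A_{n}^{\ast}$ classified by Proposition~\ref{prop:tight-filters}. Recall that the idempotents of $D_{n}$ are the identities $1_{X}$ on finitely generated right ideals, which by Lemma~\ref{lem:colson} may be indexed by prefix codes; here $1_{X} \leq 1_{Y}$ iff every string of $X$ has a prefix in $Y$, and $1_{X} \wedge 1_{Y} \neq 0$ iff $XA_{n}^{\ast} \cap YA_{n}^{\ast} \neq \varnothing$.

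Given a tight filter $F$ in $\mathsf{E}(D_{n})$, I would associate the set
\[
\Gamma(F) = \{x \in A_{n}^{\ast} : 1_{\{x\}} \in F\}.
\]
Applying tightness of $F$ to the obvious tight cover $\{1_{\{x\}} : x \in X\}$ of each $1_{X}$ shows that $F$ is fully determined by $\Gamma(F)$, namely $1_{X} \in F$ iff some $x \in X$ lies in $\Gamma(F)$. I would then verify that $\Gamma(F)$ satisfies the two axioms of a filter on $A_{n}^{\ast}$ from Proposition~\ref{prop:tight-filters}: prefix-closure follows from upward-closure of $F$, and directedness follows because $1_{\{x\}} \wedge 1_{\{y\}} \in F$ is non-zero, forcing $x$ and $y$ to be prefix-comparable so that the longer of them lies in $\Gamma(F)$ and witnesses the required extensions.

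Next I would check that $\Gamma(F)$ is tight in the free-monoid sense. The key is that a tight cover $\{a_{1}, \ldots, a_{m}\}$ of $a$ in $A_{n}^{\ast}$ translates, via Lemma~\ref{lem:tate}, Lemma~\ref{lem:corona} and Lemma~\ref{lem:mpc}, into a tight cover of $1_{\{a\}}$ in $D_{n}$; tightness of $F$ then yields some $1_{\{a_{i}\}} \in F$, i.e.\ $a_{i} \in \Gamma(F)$. By Proposition~\ref{prop:tight-filters}, $\Gamma(F)$ is therefore a good subset, so $\Gamma(F) = \mathsf{Pref}(\xi)$ for a unique right-infinite string $\xi \in A_{n}^{\omega}$.

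To conclude, suppose $F \subseteq F'$ with $F'$ a proper filter and pick $1_{X} \in F'$. Since all $1_{\{\xi_{1} \cdots \xi_{k}\}}$ lie in $F \subseteq F'$, properness forces $XA_{n}^{\ast} \cap \{\xi_{1} \cdots \xi_{k}\}A_{n}^{\ast} \neq \varnothing$ for every $k$. Choosing $k$ greater than the length of every string in $X$ forces some $x \in X$ to be a prefix of $\xi_{1} \cdots \xi_{k}$, hence of $\xi$; therefore $x \in \Gamma(F)$ and $1_{X} \in F$ by upward-closure, giving $F = F'$. The main obstacle is the tight-cover translation between the two settings, in particular checking that the inverse-semigroup notion from Lemma~\ref{lem:nanaimo} agrees on principal idempotents $1_{\{a\}}$ with the free-monoid notion introduced before Example~\ref{ex:finite-filter}; once this dictionary is established, the classification of good subsets in Proposition~\ref{prop:tight-filters} does the real work.
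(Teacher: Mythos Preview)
Your proposal is correct and follows essentially the same route as the paper: reduce to $\mathsf{E}(D_{n})$ via Proposition~\ref{prop:idpts-semigroup}, pass from filters on $\mathsf{E}(D_{n})$ to filters on $A_{n}^{\ast}$ through the assignment $F \mapsto \{x : xx^{-1} \in F\}$ (the paper calls this $\mathsf{F}$), translate tight covers between the two settings, and invoke Proposition~\ref{prop:tight-filters}. The only real difference is in the closing move: the paper sets up the full order-isomorphism between prime filters on $\mathsf{E}(D_{n})$ and filters on $A_{n}^{\ast}$, checks that it matches tight with tight and maximal with maximal, and then reads off ``tight $=$ ultrafilter'' in $\mathsf{E}(D_{n})$ directly from the equality in $A_{n}^{\ast}$; you instead extract the right-infinite string $\xi$ from the good subset $\Gamma(F)$ and verify maximality of $F$ by hand with a length argument. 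Both are fine, and your version avoids checking the inverse correspondence $\mathsf{P}$ at the cost of a small direct computation. One cosmetic point: the dictionary ``$\{a_{1},\ldots,a_{m}\}$ is a tight cover of $a$ in $A_{n}^{\ast}$ iff $\{a_{i}a_{i}^{-1}\}$ is a tight cover of $aa^{-1}$ in $D_{n}$'' is really just an unwinding of the two definitions (compare the paper's one-line remark in its proof) rather than a consequence of Lemmas~\ref{lem:tate}, \ref{lem:corona}, \ref{lem:mpc}, which handle the special case of covers of the identity by maximal prefix codes; citing those lemmas is harmless but slightly indirect.
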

\begin{proof} We shall begin by connecting tight filters (respectively, maximal filters) in $\mathsf{E}(D_{n})$
with tight filters (respectively, maximal filters) in $A_{n}^{\ast}$.
Let $A$ be a filter in $A_{n}^{\ast}$.
Put $\mathsf{P}(A)$ equal to the set $\{xx^{-1} \colon x \in A\}^{\uparrow} \cap \mathsf{E}(D_{n})$
It is routine to check that $\mathsf{P}(A)$ is a prime filter.
Now, let $P$ be a prime filter in  $\mathsf{E}(D_{n})$.
Put $\mathsf{F}(P) = \{x \in A^{\ast}_{n} \colon xx^{-1} \in P\}$.
Since $P$ is a prime filter, the set $\mathsf{F}(P)$ is non-empty.
It is routine to check that it is a filter in $A_{n}^{\ast}$.
These two maps lead to an order-isomorphism between the set of filters in $A_{n}^{\ast}$
and the set of prime filters in $\mathsf{E}(D_{n})$.
We now prove that this correspondence leads to a bijection between the set of
maximal filters (respectively, tight filters) on $A_{n}^{\ast}$ and the set of maximal filters (respectively, tight filters) in $\mathsf{E}(D_{n})$.
It is routine to prove the former, and the latter is proved when we observe that
$\{a_{1}, \ldots, a_{m} \}$ is a tight cover of $a$ in $A_{n}^{\ast}$ precisely when 
$\{a_{1}a_{1}^{-1}, \ldots, a_{m}a_{m}^{-1}\}$ is a tight cover of $aa^{-1}$ in $D_{n}$.
By Proposition~\ref{prop:tight-filters}, the tight filters in $A_{n}^{\ast}$ are the same as the maximal filters.
It follows that the tight filters in $\mathsf{E}(D_{n})$ are the same as the maximal filters in $\mathsf{E}(D_{n})$.
The claim now follows by Proposition~\ref{prop:idpts-semigroup}.
\end{proof}

\noindent
{\bf Definition. }Put $C_{n} = D_{n}/\equiv$ which we call the {\em Cuntz inverse monoid}.\\

\begin{theorem}[The Cuntz inverse monoid]\label{them:one} The inverse monoid $C_{n}$
is a Boolean inverse $\wedge$-monoid whose group of units is the group
$G_{n,1}$.
\end{theorem}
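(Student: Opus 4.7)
The plan is to assemble the theorem from three packages already prepared earlier in the paper: the distributive $\wedge$-structure of $D_n/\equiv$, the Boolean criterion via tight filters, and the Lenz-quotient description of the group of units.

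First, I would observe that $C_n$ is a distributive inverse $\wedge$-monoid by Lemma~\ref{lem:skype}, which was obtained from Lemma~\ref{lem:dist-inv} (saying $D_n$ is a distributive $\wedge$-monoid), Corollary~\ref{cor:idpt-pure} (saying $\equiv$ is idempotent-pure on $D_n$), and Lemma~\ref{lem:idpt-pure-dist} (saying quotients by idempotent-pure congruences of distributive $\wedge$-semigroups retain both structures). So $C_n$ has all the meets and compatible joins needed.

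Next I would upgrade distributive to Boolean. Here the key tool is Theorem~\ref{them:seven}, which says that for a distributive inverse semigroup with $\equiv$ idempotent-pure, $S/\equiv$ is Boolean if and only if every tight filter of $S$ is an ultrafilter. Both hypotheses are in hand for $S=D_n$: idempotent-purity is Corollary~\ref{cor:idpt-pure}, and the equivalence of tight filters with ultrafilters in $D_n$ is exactly Proposition~\ref{prop:tight-ultra}. Therefore $C_n = D_n/\equiv$ is a Boolean inverse $\wedge$-monoid.

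Finally, I would identify the group of units. By Lemma~\ref{lem:red-panda}, if $T$ is an inverse monoid on which $\equiv$ is idempotent-pure and for which the subsemigroup $T^e$ of essential elements is $E$-unitary, then $\mathsf{U}(T/\equiv) \cong T^e/\sigma$. Taking $T = D_n$: the idempotent-purity hypothesis is Corollary~\ref{cor:idpt-pure}; and $D_n^e$ is $F$-inverse by Proposition~\ref{prop:finverse}, hence in particular $E$-unitary. Thus $\mathsf{U}(C_n) \cong D_n^e/\sigma$, and this last quotient is the definition of the Thompson group $G_{n,1}$ given in Section 2.

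The only step that required real work in this paper is already done, namely Proposition~\ref{prop:tight-ultra}, which reduces tight-equals-maximal filters in $D_n$ to the corresponding statement in $A_n^{\ast}$ via the correspondence between prime filters in $\mathsf{E}(D_n)$ and filters in $A_n^{\ast}$ (and then invokes Proposition~\ref{prop:tight-filters}). Given that, the theorem itself is essentially a bookkeeping exercise: invoke Lemma~\ref{lem:skype}, Theorem~\ref{them:seven}, and Lemma~\ref{lem:red-panda} in sequence.
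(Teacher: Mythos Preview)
Your proposal is correct and follows essentially the same route as the paper's own proof: it assembles Proposition~\ref{prop:tight-ultra}, Corollary~\ref{cor:idpt-pure}, Lemma~\ref{lem:dist-inv}, Lemma~\ref{lem:idpt-pure-dist}, and Theorem~\ref{them:seven} to get the Boolean inverse $\wedge$-monoid structure, and then Proposition~\ref{prop:finverse} with Lemma~\ref{lem:red-panda} for the group of units. The only cosmetic difference is that you cite Lemma~\ref{lem:skype} as a package for the distributive $\wedge$-part, whereas the paper unpacks its ingredients directly.
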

\begin{proof}  By Proposition~\ref{prop:tight-ultra} tight filters and ultrafilters on $D_{n}$ are the same.
By Corollary~\ref{cor:idpt-pure}, the congruence $\equiv$ is idempotent-pure on $D_{n}$.
It now follows by Lemma~\ref{lem:dist-inv}, Lemma~\ref{lem:idpt-pure-dist} and Theorem~\ref{them:seven},
that $C_{n}$ is a Boolean inverse monoid.
The claim about the group of units follows by Proposition~\ref{prop:finverse} and Lemma~\ref{lem:red-panda}.
\end{proof}

We can now give a universal characterization of the Boolean inverse monoid $C_{n}$.
Denote by $\iota \colon P_{n} \rightarrow C_{n}$ the natural embedding.
The following result shows that the Cuntz inverse monoid $C_{n}$ is the {\em tight completion}
of the polycyclic inverse monoid $P_{n}$.
It is a special case of \cite{LV2020b}.
See also \cite{DM}.
A homomorphism $\theta \colon S \rightarrow B$ to a Boolean inverse semigroup is called s {\em cover-to-join}
map if whenever $\{a_{1},\ldots,a_{m} \}$ is a cover of $a$ we have that $\theta (a) = \bigvee_{i=1}^{m} \theta (a_{i})$.
The following theorem makes precise what we informally stated in Remark~\ref{rem:lagavulin}.

\begin{theorem}[The tight completion]\label{them:tight-completion} \mbox{}

\begin{enumerate}

\item Let $\theta \colon P_{n} \rightarrow B$ be a monoid homomorphism to a Boolean inverse monoid $B$ such that
$\bigvee_{i=1}^{n} \theta (a_{i}a_{i}^{-1}) = 1$.
Then it is a cover-to-join map.

\item There is a unique morphism of Boolean inverse monoids $\psi \colon C_{n} \rightarrow B$ such that
$\psi \iota = \theta$.

\end{enumerate}
\end{theorem}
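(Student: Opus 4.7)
The plan is to prove (1) first and then derive (2) by combining (1) with the universal property of $D_{n}$ as the distributive completion of $P_{n}$ (Theorem~\ref{them:dist-completion}) and the universal property of $C_{n} = D_{n}/{\equiv}$ for essential morphisms into Boolean inverse semigroups (the theorem immediately preceding Lemma~\ref{lem:dlenz}).

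For part (1), let $\{a_{1}, \ldots, a_{m}\}$ be a cover of $a$ in $P_{n}$. Since every $a_{i} \leq a$ we have $a_{i} = a\mathbf{d}(a_{i})$, and $\{\mathbf{d}(a_{1}), \ldots, \mathbf{d}(a_{m})\}$ is a cover of $\mathbf{d}(a)$; because multiplication distributes over compatible joins in the Boolean inverse monoid $B$, it suffices to establish cover-to-join for covers of idempotents. For a cover of $xx^{-1}$, Lemma~\ref{lem:tate} lets me replace it by an orthogonal cover $\{x_{i}x_{i}^{-1}\}$ with $x_{i} = xp_{i}$, and Lemma~\ref{lem:corona} gives that $\{p_{i}p_{i}^{-1}\}$ is then a tight cover of $1$; applying $\theta(x)(-)\theta(x^{-1})$ reduces the claim for $xx^{-1}$ to the claim for $1$. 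By Lemma~\ref{lem:mpc} a tight orthogonal cover of $1$ corresponds to a maximal prefix code, and by Theorem~\ref{them:max-pc} every such code is obtained from $A_{n}$ by a finite sequence of caret expansions. I would then induct on the length of this sequence: the base case $\{a_{1}a_{1}^{-1}, \ldots, a_{n}a_{n}^{-1}\}$ joins to $1$ by hypothesis, and a single caret expansion replaces a summand $\theta(xx^{-1})$ by
$$\bigvee_{i=1}^{n}\theta(xa_{i}(xa_{i})^{-1}) = \theta(x)\left(\bigvee_{i=1}^{n}\theta(a_{i}a_{i}^{-1})\right)\theta(x^{-1}) = \theta(x)\theta(x^{-1}) = \theta(xx^{-1}),$$
leaving the total join unchanged.

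For part (2), I would first invoke Theorem~\ref{them:dist-completion} to extend $\theta$ uniquely to a morphism of distributive inverse monoids $\theta' \colon D_{n} \to B$; the hypotheses of Theorem~\ref{them:abstract-dist-compl} hold for $P_{n} \subseteq D_{n}$ by Proposition~\ref{prop:skyr}, and $B$ is distributive since it is Boolean. Next I would show $\theta'$ is essential. Given $a \leq_{e} b$ in $D_{n}$, write $b = \bigvee_{j} b_{j}$ with $b_{j} \in P_{n}$. For each $j$, I check $a \wedge b_{j} \leq_{e} b_{j}$: if $0 < y \leq b_{j}$ then $y \leq b$, so $y \wedge a \neq 0$ by essentiality of $a$ in $b$, and $y \wedge a \leq a \wedge b_{j}$. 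Writing $a \wedge b_{j} = \bigvee_{k} c_{j,k}$ with $c_{j,k} \in P_{n}$ exhibits $\{c_{j,k}\}_{k}$ as a cover of $b_{j}$ in $P_{n}$; by part~(1), $\theta(b_{j}) = \bigvee_{k}\theta(c_{j,k}) = \theta'(a \wedge b_{j})$. Summing over $j$ and using Lemma~\ref{lem:meets-joins} to decompose $a = \bigvee_{j}(a \wedge b_{j})$ gives $\theta'(a) = \theta'(b)$. Finally, applying the universal property of the tight completion to the essential morphism $\theta'$ produces the unique Boolean morphism $\psi \colon C_{n} \to B$ satisfying $\psi\iota = \theta$, with uniqueness inherited from the two successive universal properties.

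The main obstacle is part (1): extracting cover-to-join for every cover in $P_{n}$ from the single normalization $\bigvee_{i}\theta(a_{i}a_{i}^{-1}) = 1$. The work is concentrated in the caret-expansion induction at the identity, with Lemmas~\ref{lem:tate} and \ref{lem:corona} together with Theorem~\ref{them:max-pc} serving as the machinery that reduces every cover to this prototype, in accordance with the programme sketched informally in Remark~\ref{rem:lagavulin}.
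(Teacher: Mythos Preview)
Your argument is correct. Part~(1) follows the same line as the paper: reduce to covers of idempotents, use Lemma~\ref{lem:tate} and Lemma~\ref{lem:corona} to reduce to tight covers of the identity, identify these with maximal prefix codes via Lemma~\ref{lem:mpc}, and then induct on caret expansions via Theorem~\ref{them:max-pc}. You are slightly more explicit than the paper about the reduction from arbitrary elements to idempotents, but the content is identical.

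For part~(2) you take a route that the paper itself sketches in the Remark immediately following the theorem, rather than the direct argument given in the proof. The paper defines $\psi$ explicitly on $\equiv$-classes: it first extends the cover-to-join statement of part~(1) to the assertion that $\bigvee_{i}a_{i}\leq_{e}\bigvee_{j}b_{j}$ in $D_{n}$ forces $\bigvee_{i}\theta(a_{i})=\bigvee_{j}\theta(b_{j})$ (via Lemma~\ref{lem:tight-covers}), and then invokes Lemma~\ref{lem:new-characterization} to conclude that $\equiv$-related elements have equal images, so that $\psi([\,\bigvee_{i}a_{i}\,])=\bigvee_{i}\theta(a_{i})$ is well-defined. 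Your approach instead factors through the two universal properties: first extend $\theta$ to $\theta'\colon D_{n}\to B$ by Theorem~\ref{them:dist-completion}, verify that $\theta'$ is essential by a direct computation with $P_{n}$-covers, and then quotient by $\equiv$ using the universal property stated just before Lemma~\ref{lem:dlenz}. Both arguments are valid and amount to the same thing unwound; yours makes the two-stage structure (distributive completion, then Booleanization) more visible, while the paper's is marginally shorter because it bypasses the intermediate check that $\theta'$ is essential by working directly with Lemma~\ref{lem:new-characterization}.
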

\begin{proof} (1) It is easy to check that $\{a_{1}a_{1}^{-1},\ldots, a_{n}a_{n}^{-1} \}$ is a tight cover of $1$. 
By Lemma~\ref{lem:mpc} we have that
$\{x_{1}, \ldots, x_{m} \}$ is a maximal prefix code if and only if 
$\{x_{1}x_{1}^{-1}, \ldots, x_{m}x_{m}^{-1} \}$ is a tight cover of $\varepsilon \varepsilon^{-1}$.
Thus by Theorem~\ref{them:max-pc}, 
it follows that if
$\bigvee_{i=1}^{n} \theta (a_{i}a_{i}^{-1}) = 1$
then 
$\bigvee_{i=1}^{m} \theta (x_{i}x_{i}^{-1}) = 1$.
We now use Lemma~\ref{lem:corona},
we have that
if $\{x_{1}x_{1}^{-1}, \ldots, x_{m}x_{m}^{-1}\} \subseteq (xx^{-1})^{\downarrow}$, 
where $x_{i} = xp_{i}$,
then this is a tight cover if and only if 
 $\{p_{1}p_{1}^{-1}, \ldots, p_{m}p_{m}^{-1}\} \subseteq (\varepsilon \varepsilon^{-1})^{\downarrow}$ 
is a tight cover.
It follows that 
$\bigvee_{i=1}^{m} \theta (x_{i}x_{i}^{-1}) = \theta (xx^{-1})$.
By Lemma~\ref{lem:tight-covers} and the above result if
$\bigvee_{i} a_{i} \leq_{e} \bigvee_{j} b_{j}$ 
then
$\bigvee_{i} \theta (a_{i}) =  \bigvee_{j} \theta (b_{j})$.

(2) We now use Lemma~\ref{lem:new-characterization} to deduce that if 
$\bigvee_{i} a_{i} \equiv \bigvee_{j} b_{j}$ 
then 
$\bigvee_{i} \theta (a_{i}) =  \bigvee_{j} \theta (b_{j})$.
Denote the $\equiv$-class on $D_{n}$ containing the element $a$ by $[a]$.
Then we may define $\psi \colon C_{n} \rightarrow B$ by
$\psi ([\bigvee_{i} a_{i}]) = \bigvee_{i} \theta (a_{i})$.
This is well-defined by the above calculations.
It is now routine to check that $\psi$ has the requisite properties. 
\end{proof}

\begin{remark}{\em We can explain what is going on the above theorem as follows.
Let $\theta \colon P_{n} \rightarrow B$ be a cover-to-join map.
Then there is an essential morphism $\theta^{\ast} \colon \mathsf{D}(P_{n}) \rightarrow B$, by \cite[Proposition~6.14]{LL2},
such that $\theta^{\ast} \iota = \theta$.
But the distributive completion of $P_{n}$ is $D_{n}$ by Theorem~\ref{them:dist-completion}.
Thus we have a well behaved essential morphism $\theta^{\ast} \colon D_{n} \rightarrow B$.
But if $\theta^{\ast}(a) = \theta^{\ast}(b)$ then $a \, \equiv \, b$.
The result now follows.}
\end{remark}

We now compute the \'etale groupoid associated with $C_{n}$ under non-commutative Stone duality \cite{Lawson2010, Lawson2012, LL}
See \cite[Section 3]{Lawson2010} for further references.
Recall that the elements of the groupoid $\mathcal{G}_{n}$ associated with the Cuntz $C^{\ast}$-algebra are those elements of
$A_{n}^{\omega} \times \mathbb{Z} \times A_{n}^{\omega}$ of the form
$(xw, \mid x \mid - \mid y \mid, yw)$ where $x$ and $y$ are finite strings and $w \in A_{n}^{\omega}$.
The product is defined by $(z,k,z')(z',k', z'') = (z, k + k', z'')$ and $(z,k,z')^{-1} = (z',-k, z)$.
The topology on $\mathcal{G}_{n}$ has a basis of open sets of the form
$$U_{x,y,V} = \{(xw, \mid x \mid - \mid y \mid, yw) \colon w \in V\},$$ 
where $V$ is an open subset of the Cantor space $A_{n}^{\omega}$
and $x$ and $y$ are fixed finite strings.
The following theorem justifies the intuitive approach adopted in \cite[Section 3]{Lawson2010}.

\begin{theorem}[The associated groupoid]\label{them:associated-groupoid} 
The \'etale groupoid of $C_{n}$ is isomorphic to the groupoid $\mathcal{G}_{n}$.
\end{theorem}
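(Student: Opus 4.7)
The plan is to chain together the groupoid identifications already available in the excerpt so as to replace $C_n$ by $P_n$, and then identify $\mathsf{G}(P_n)$ with $\mathcal{G}_n$ by a direct calculation. By Corollary~\ref{cor:idpt-pure} the Lenz congruence $\equiv$ is idempotent-pure on $D_{n}$, so Theorem~\ref{them:isomorphism-etale-groupoids} yields $\mathsf{G}(D_n) \cong \mathsf{G}(C_n)$ as étale groupoids. On the other hand, Theorem~\ref{them:isomorphism-of-etale-groupoids} applied to $P_n \hookrightarrow D_n = \mathsf{D}(P_n)$ (Theorem~\ref{them:dist-completion}) gives $\mathsf{G}(P_n) \cong \mathsf{G}(D_n)$. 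Composing, it suffices to exhibit an isomorphism of étale groupoids $\mathsf{G}(P_n) \cong \mathcal{G}_n$.

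The main input for the bijection is the description of ultrafilters. For idempotents, Proposition~\ref{prop:tight-filters} together with the order-isomorphism between filters in $A_n^{\ast}$ and prime filters of $\mathsf{E}(P_n)$ used in the proof of Proposition~\ref{prop:tight-ultra} identifies the ultrafilters of $\mathsf{E}(P_n)$ with maximal (equivalently: tight, equivalently: good) filters in $A_n^{\ast}$, which are in natural bijection with $A_n^{\omega}$; concretely, the idempotent ultrafilter corresponding to $w \in A_n^{\omega}$ is $F_w = \{xx^{-1} : x \text{ is a finite prefix of } w\}^{\uparrow}$. For a general ultrafilter $A$ of $P_n$, since $P_n$ is $E^{\ast}$-unitary, either $A$ is an idempotent ultrafilter or $A$ contains some $xy^{-1}$ with $x \neq y$, and then $\mathbf{d}(A)$ is the idempotent ultrafilter $F_w$ for a unique $w \in A_n^{\omega}$ with $yw \in A_n^{\omega}$; the same $w$ gives $\mathbf{r}(A) = F_{xw}$. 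I would then define
\[
\Phi \colon \mathsf{G}(P_n) \longrightarrow \mathcal{G}_n, \qquad A \longmapsto (xw,\ |x|-|y|,\ yw),
\]
where $xy^{-1} \in A$ and $\mathbf{d}(A) = F_w$ (idempotent ultrafilters $F_w$ map to $(w,0,w)$).

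Well-definedness follows because if $x'(y')^{-1} \in A$ is another representative, then $A$ contains a common lower bound $xy^{-1} \wedge x'(y')^{-1}$ of the form $(xp)(yp)^{-1} = (x'p')(y'p')^{-1}$, forcing $|x|-|y| = |x'|-|y'|$ and the same pair $(xw, yw)$. Injectivity uses the fact that an ultrafilter is determined by any one of its elements together with its domain ultrafilter; surjectivity is witnessed by $A = (xy^{-1} F_w)^{\uparrow}$ for each prescribed triple. The groupoid operations match: $A \cdot B$ is composed when $\mathbf{d}(A) = \mathbf{r}(B)$, which on triples is exactly the condition that the middle components agree, and then choosing compatible representatives $xy^{-1}, yz^{-1}$ gives $xz^{-1} \in A\cdot B$, matching the addition of shift integers.

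The step I expect to require the most care is the check that $\Phi$ is a homeomorphism. The plan is to verify that $\Phi(V_{xy^{-1}}) = U_{x,y,A_n^\omega}$ (immediately from the bijection above) and then to observe that the sets $U_{x,y,aA_n^\omega}$ forming the declared basis of $\mathcal{G}_n$ are already of the form $U_{xa,ya,A_n^\omega}$, so that basic open sets on the two sides correspond bijectively under $\Phi$. Combined with Theorem~\ref{them:etale-groupoid}, which tells us that the $V_s$ form a basis for $\mathsf{G}(P_n)$, this will yield the claimed homeomorphism and complete the proof.
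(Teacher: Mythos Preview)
Your proposal is correct and follows essentially the same route as the paper: reduce $\mathsf{G}(C_n)\cong\mathsf{G}(D_n)\cong\mathsf{G}(P_n)$ via Theorems~\ref{them:isomorphism-etale-groupoids} and~\ref{them:isomorphism-of-etale-groupoids}, then set up the bijection $\mathsf{G}(P_n)\leftrightarrow\mathcal{G}_n$ and verify it is a homeomorphism by matching $V_{xy^{-1}}$ with $U_{x,y,A_n^\omega}$ and observing $U_{x,y,uA_n^\omega}=U_{xu,yu,A_n^\omega}$, exactly as the paper does. One small slip to fix: since $yy^{-1}\in\mathbf{d}(A)$, the infinite string attached to $\mathbf{d}(A)$ has $y$ as a prefix, so you should write $\mathbf{d}(A)=F_{yw}$ (and then $\mathbf{r}(A)=F_{xw}$) rather than $\mathbf{d}(A)=F_w$; with that correction your map $A\mapsto(xw,\,|x|-|y|,\,yw)$ agrees with the paper's, which arrives at the same bijection by first characterizing ultrafilters of $P_n$ directly as the upward-closed linearly ordered subsets with no least element.
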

\begin{proof}  By Theorem~\ref{them:isomorphism-etale-groupoids},
the \'etale topological groupoid associated with $C_{n}$ is isomorphic to the \'etale toplogical groupoid associated with $D_{n}$.
It follows that we can work with $D_{n}$ alone.
But, by Theorem~\ref{them:isomorphism-of-etale-groupoids}, we have that $\mathsf{G}(D_{n})$
is isomorphic to $\mathsf{G}(P_{n})$.
We shall accordingly prove that $\mathsf{G}(P_{n})$ is isomorphic as a topological groupoid with  $\mathcal{G}_{n}$.

Let $A$ be a proper filter in $P_{n}$.
We claim that $P_{n}$ is a linearly ordered set
To prove the claim, suppose that $xy^{-1}, uv^{-1} \in A$.
Then, since $A$ is a proper filter, there is a non-zero element $wz^{-1} \leq xy^{-1}, uv^{-1}$.
It follows that there are finite strings $p$ and $q$ such that
$w = xp = uq$ and $z = yp = vq$.
The finite strings $x$ and $u$ are comparable.
Without loss of generality, we can assume that $x = us$ for some finite string $s$.
Thus $q = sp$ and $y = vs$.
It follows that $xy^{-1} \leq uv^{-1}$.
The above calculation delivers something more:
$$\mid w \mid - \mid z \mid =   \mid xp \mid - \mid yp \mid = \mid uq \mid - \mid vq \mid.$$
 It follows that 
 $\mid x \mid - \mid y \mid = \mid u \mid - \mid v \mid = n$, say.
We shall call $n$ the {\em index} of the proper filter. 

Now, let $A$ be an ultrafilter.
Suppose that $A$ contained a minimum element $ab^{-1}$.
Then $A$ would contain only a finite number of elements
and would therefore be the principal filter $(ab^{-1})^{\uparrow}$.
But if $p$ is any non-empty finite string, then $(ap)(bp)^{-1} < ab^{-1}$.
It follows that $(ab^{-1})^{\uparrow}$ cannot be an ultrafilter.
Thus ultrafilters can have no smallest elements.
 
 Let $A$ be a linearly ordered set with no smallest element and closed upwards in $P_{n}$.
 Let $m'$ be an arbitrary element of $\mathbb{N}$.
 Then we claim that there exists $ab^{-1} \in A$ such that $\mid a \mid, \mid b \mid > m'$.
 We now prove the claim.
 Let $uv^{-1} \in A$.
 If $\mid u \mid, \mid v \mid > m'$ then we are done.
 We know that $A$ has no smallest element.
 Thus, since the set is linearly ordered, there is $u_{1}v_{1}^{-1} < uv^{-1}$ where $u_{1}v_{1}^{-1} \in A$.
 It follows that $(u_{1},v_{1}) = (u,v)p$ for some non-empty string $p$.
 Observe that $\mid u_{1} \mid > \mid u \mid$ and  $\mid v_{1} \mid > \mid v \mid$.
 Continuing in this way, we can find our element $ab^{-1}$.
 
 Let $B$ be any linearly ordered set of elements of $P_{n}$ with no smallest element which is closed upwards.
 We prove that it is an ultrafilter.
 Suppose not.
 Then there is a proper filter $A$ such that $B \subset A$.
 Let $ab^{-1} \in A \setminus B$.
 Let $m'$ be the larger of $\mid a \mid$ and $\mid b \mid$.
 Then we can find an element $uv^{-1} \in A$ such that
 $\mid u \mid, \mid v \mid > m'$.
 Since $ab^{-1}, uv^{-1} \in A$ and $A$ is a filter it follows that $ab^{-1}$ and $uv^{-1}$ are comparable.
 Thus either $ab^{-1} \leq uv^{-1}$ or $uv^{-1} \leq ab^{-1}$.
 But the latter would imply that $ab^{-1} \in B$ which contradicts our choice.
 But the former is impossible on size grounds.
 It follows that $A \setminus B$ is the empty set and so $A$ is an ultrafilter.
 
 We have therefore characterized the ultrafilters in $P_{n}$:
 they are the linearly ordered subsets with no smallest element which are also closed upwards.
 
 We now prove that there is a bijection between ultrafilters in $P_{n}$
 and elements of the groupoid $\mathcal{G}_{n}$.
   Let $(xw, n, yw)$ be an element of $\mathcal{G}_{n}$.
  Define $A$ to be the set of all elements above $xy^{-1}$ together with
  all elements of the form $xp(yp)^{-1}$ where $p$ is a finite prefix
  of the right-infinite string $w$.
  It is immediate from our above characterization that $A$ is an ultrafilter in $P_{n}$.
  Now, let $A$ be any ultrafilter in $P_{n}$.
  Let $xy^{-1} \in A$.
  Let $W$ be the set of all finite strings $p$ such that $xp(yp)^{-1} \in A$.
  From our calculations above, it is easy to check that $W$ is a good subset.
  Thus it corresponds to a right-infinite string $w$.
  Let $n$ be the index of $A$.
  Then we have constructed the triple $(xw, n, yw)$ which belongs to $\mathcal{G}_{n}$. 
  These two constructions are mutually inverse and so we have constructed a 
  bijection between $\mathsf{G}(P_{n})$ and $\mathcal{G}_{n}$.
  It is routine to check that this is an isomorphism of groupoids.
  
 We now turn to the topologies.
Fix an element $xy^{-1}$ of $P_{n}$ and consider the set of all ultrafilters that contain that element.
This is one of the basic open subsets in the topology of the associated groupoid.
Under our bijection established above,
it follows that the set of all ultrafilters containing $xy^{-1}$ corresponds to the set
$\{(xw, \mid x \mid - \mid y \mid, yw) \colon w \in A_{n}^{\omega} \}$
which is the set $U_{x,y,A_{n}^{\omega}}$.
Let $V \subseteq A_{n}^{\omega}$ be an arbitrary open set.
Then $V = XA_{n}^{\omega}$ where $X \subseteq A_{n}^{\ast}$ \cite{BP}.
Thus $V$ is a union of sets of the form $uA_{n}^{\omega}$ where $u$ is a finite string.
Now observe that the following set
$$\{(xuw, \mid xu \mid - \mid yu\mid, yuw) \colon w \in A_{n}^{\omega})$$ 
is equal to the set
$$\{(xw, \mid x \mid - \mid y \mid, yw) \colon w \in uA_{n}^{\omega}).$$ 
It follows that the topology with basis  $U_{x,y,A_{n}^{\omega}}$
is the same as the topology with basis the sets $U_{x,y,V}$. 
\end{proof}

\section{The $n$-ary Cantor algebra}

In this section, we shall describe a different approach to constructing the Thompson groups but
relate it to the one adopted in this paper.
This will involve combining the universal algebras introduced in \cite{Higman} 
with the apparently unrelated work in theoretical computer science in \cite{Statman}.\\

\noindent
{\bf Definition. }An {\em $n$-ary Cantor algebra} is a structure $(X,\alpha_{1}, \ldots, \alpha_{n}, \lambda)$, where
$\alpha_{1}, \ldots, \alpha_{n}$ are unary operations and $\lambda$ is an $n$-ary operation, satisfying the following two laws:
\begin{description}
\item[CA1] $(x\alpha_{1}, \ldots, x\alpha_{n})\lambda = x$ for all $x \in X$.
\item[CA2] $(x_{1},\ldots, x_{n})\lambda \alpha_{i} = x_{i}$ where $1 \leq i \leq n$.
\end{description}
Observe that we write the operators of the algebra to the right of their arguments.
Given such an algebra, we may define a bijection $\beta \colon X \rightarrow X^{n}$ by $x\beta = (x\alpha_{1}, \ldots, x\alpha_{n})$
for each $x \in X$. Conversely, every bijection from $X$ to $X^{n}$ defines an $n$-ary Cantor algebra \cite{Higman, Rosenthal}.
A {\em $2$-ary Cantor algebra}, or {\em binary Cantor algebra}, is referred to simply as a {\em Cantor algebra.}\\

We shall need a few simple results about $n$-ary Cantor algebras.
Let $Y$ be a subset of an $n$-ary Cantor algebra.
Consider the subalgebra generated by $Y$.
The elements of this subalgebra will be obtained from the subset $Y$ by applying a finite sequence of the algebra operations.
We define {\em allowable $\lambda$-expressions} over $Y$ inductively: the elements of $Y$ are allowable
$\lambda$-expressions; if $X_{1}, \ldots , X_{n}$ are allowable $\lambda$-expressions so too is 
$(X_{1}, \ldots, X_{n})\lambda$; finally, every allowable $\lambda$-expression is obtained by applying the previous two steps a finite number of times. 
Thus in the case of the binary Cantor algebra $(y_{1}, (y_{2}, y_{3})\lambda)\lambda$ is an allowable $\lambda$-expression but $(y_{1})\lambda$ is not.
The proof of the following follows by repeated application of law (CA2).

\begin{lemma}\label{lem:stonewall} 
Each element of the subalgebra of an $n$-ary Cantor algebra generated by $Y$ is equal to an allowable $\lambda$-expression.
\end{lemma}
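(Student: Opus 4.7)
The plan is a structural induction. I would let $B$ denote the set of elements of the subalgebra that are equal to some allowable $\lambda$-expression over $Y$. Since the subalgebra generated by $Y$ is, by definition, the smallest subset of $X$ containing $Y$ and closed under $\alpha_1,\ldots,\alpha_n$ and $\lambda$, it will suffice to verify that $Y\subseteq B$ and that $B$ is stable under each of the operations $\lambda, \alpha_1,\ldots,\alpha_n$.

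The inclusion $Y\subseteq B$ and the closure of $B$ under $\lambda$ are immediate from the two clauses defining allowable $\lambda$-expressions: elements of $Y$ are allowable by the base clause, and if $z_1,\ldots,z_n$ are represented by the allowable expressions $E_1,\ldots,E_n$, then $(z_1,\ldots,z_n)\lambda$ is represented by the allowable expression $(E_1,\ldots,E_n)\lambda$.

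The content of the proof is closure under each $\alpha_i$, and this is where law (CA2) does all the work. Given $z\in B$ represented by an allowable $\lambda$-expression $E$, I would proceed by a secondary induction on the syntactic structure of $E$. If $E=(E_1,\ldots,E_n)\lambda$, then a single application of (CA2) gives $z\alpha_i = E_i$, which is itself an allowable $\lambda$-expression. If $E$ is a leaf $y\in Y$, I would invoke (CA1) to rewrite $y$ in the form $(y\alpha_1,\ldots,y\alpha_n)\lambda$ and then apply (CA2) to extract the $i$-th component. Iterating these two moves drives every $\alpha_i$ downward until it meets a matching $\lambda$ and cancels.

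The step I expect to need the most care is managing the interplay between (CA1) and (CA2) so that the reduction actually terminates at an expression built purely with $\lambda$'s over the atoms from $Y$, rather than producing a mixture of $\alpha_i$-terms on top of $Y$-leaves. This is precisely what the "repeated application of (CA2)" flagged by the author delivers: every $\alpha_i$ can be pushed down through the structure and eliminated against a $\lambda$, so no $\alpha_i$ can persist in the final expression and one is left with an allowable $\lambda$-expression over $Y$.
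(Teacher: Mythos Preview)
Your overall framework---structural induction showing that the set of elements expressible as allowable $\lambda$-expressions is closed under the algebra operations---is exactly the skeleton behind the paper's one-line proof. Closure under $\lambda$, and the case $E=(E_1,\ldots,E_n)\lambda$ of closure under $\alpha_i$, are handled correctly via (CA2), matching the paper.

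The gap is in your leaf case. When $E=y\in Y$, your move is to invoke (CA1) to write $y=(y\alpha_1,\ldots,y\alpha_n)\lambda$ and then (CA2) to extract the $i$-th coordinate. But that coordinate is $y\alpha_i$ itself: the CA1--CA2 step is the identity on this element, and your proposed iteration loops rather than terminates. Your claim that ``no $\alpha_i$ can persist in the final expression'' is therefore unjustified: an $\alpha_i$ sitting directly on a generator has no $\lambda$ beneath it to cancel against, and manufacturing one with (CA1) only reintroduces leaf-level $\alpha_j$'s. Concretely, in the free $n$-ary Cantor algebra on a single generator $x$, the element $x\alpha_1$ is not equal to any expression built from $x$ using $\lambda$ alone.

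What repeated application of (CA2) actually delivers is the slightly weaker normal form: every $\alpha_i$ either meets a $\lambda$ above it and cancels, or sinks to a leaf. The outcome is a $\lambda$-tree whose leaves are \emph{descendants} $y\alpha_{i_1}\cdots\alpha_{i_k}$ of elements $y\in Y$. This is the reading under which the lemma is invoked later (in the proof of Theorem~\ref{them:free} the leaves are sent to $x\alpha_{i_1}\cdots\alpha_{i_k}$), and it follows from (CA2) alone, with no need for (CA1).
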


The above is a sort of weak normal form for elements of the subalgebra generated by $Y$.
Lemma~\ref{lem:bush} below is just a version in our setting of a well-known result.

\begin{lemma}\label{lem:bush} 
Any allowable $\lambda$-expression determines a maximal prefix code over the alphabet $A_{n}$,
and conversely.
\end{lemma}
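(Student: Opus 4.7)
The plan is to set up the correspondence by structural induction, using Theorem~\ref{them:max-pc} (the caret-expansion structure theorem for maximal prefix codes) for the converse.

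For the forward direction, I would define, by induction on the construction of allowable $\lambda$-expressions, a finite subset $C(E) \subseteq A_{n}^{\ast}$ associated with each allowable $\lambda$-expression $E$. In the base case, if $E = y \in Y$, set $C(E) = \{\varepsilon\}$, which is the trivial maximal prefix code. In the inductive step, if $E = (E_{1},\ldots,E_{n})\lambda$ and each $C(E_{i})$ is already a maximal prefix code, set
\[
C(E) \;=\; \bigcup_{i=1}^{n} a_{i}\, C(E_{i}).
\]
That this is a prefix code is immediate from Lemma~\ref{lem:prefix-codes}. For maximality, let $w \in A_{n}^{\ast}$ be arbitrary. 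If $w = \varepsilon$, it is a prefix of every element of $C(E)$; otherwise write $w = a_{i}w'$ for some $i$ and some $w' \in A_{n}^{\ast}$, and by the inductive hypothesis $w'$ is prefix-comparable with some $c \in C(E_{i})$, whence $w = a_{i}w'$ is prefix-comparable with $a_{i}c \in C(E)$. Thus $C(E)$ is a maximal prefix code.

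For the converse, I would invoke Theorem~\ref{them:max-pc}: every maximal prefix code over $A_{n}$ is either the trivial maximal prefix code $\{\varepsilon\}$ or is obtained from it by a finite sequence of caret expansions. The trivial maximal prefix code corresponds to an expression consisting of a single variable $y \in Y$. A caret expansion replaces a string $x$ in the prefix code by $\{xa_{1},\ldots,xa_{n}\}$; under the association above this corresponds to replacing, in the $\lambda$-expression, the leaf labelled by the variable sitting in position $x$ by a subexpression of the form $(y_{1},\ldots,y_{n})\lambda$ with fresh variables $y_{1},\ldots,y_{n}$. Proceeding by induction on the number of caret expansions required to produce the prefix code, we obtain an allowable $\lambda$-expression $E$ with $C(E) = X$.

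The argument is essentially a translation between two combinatorial objects that encode the same rooted $n$-ary tree with labelled leaves, so the main steps are routine inductions; the only non-trivial ingredient is Theorem~\ref{them:max-pc}, which does the real work on the prefix-code side and guarantees that no maximal prefix codes are missed. There is no serious obstacle, only the need to be explicit about the inductive definitions of $C(E)$ and of the tree of an expression, so as to make the bijective correspondence precise.
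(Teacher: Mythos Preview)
Your proof is correct; the inductive definition of $C(E)$ together with Lemma~\ref{lem:prefix-codes} for the forward direction and Theorem~\ref{them:max-pc} for the converse is exactly the natural argument. The paper itself does not give a proof of this lemma at all: it simply remarks that Lemma~\ref{lem:bush} ``is just a version in our setting of a well-known result'' and follows it with an illustrative example, so you have supplied the details that the paper deliberately omits.
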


\begin{example}{\em We work with binary Cantor algebras.
Here $A_{n} = \{a,b\}$.
The allowable $\lambda$-expression $(y_{1}, (y_{2}, y_{3})\lambda)\lambda$ corresponds 
to the maximal prefix code $\{a, ba, bb\}$.}
\end{example}

The following result is key.

\begin{proposition}\label{prop:lincoln} 
We work in an arbitrary $n$-ary Cantor algebra.
Let $t$ be any element of an $n$-ary Cantor subalgebra generated by $Y$.
Let $t_{1}$ and $t_{2}$ be two allowable $\lambda$-expressions over $Y$ for $t$.
Then $t_{1}$ can be transformed to $t_{2}$ using only law (CA1).
\end{proposition}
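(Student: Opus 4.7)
The plan is to identify each allowable $\lambda$-expression over $Y$ with its tree, equivalently with a maximal prefix code $X \subseteq A_n^{\ast}$ together with a labelling of its elements by members of $Y$ (Lemma~\ref{lem:bush}). Under this identification, law (CA1) used left-to-right turns a leaf labelled by $x$ into an internal $\lambda$-node with $n$ children labelled $x\alpha_1, \ldots, x\alpha_n$, which is precisely a caret expansion at that leaf. Used right-to-left it recognises a $\lambda$-node whose $n$ children happen to be of the form $x\alpha_1, \ldots, x\alpha_n$ for some single $x$ and collapses it to a single leaf labelled $x$.

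Let $X_1, X_2$ be the underlying maximal prefix codes of $t_1, t_2$, with leaf labellings $\ell_1, \ell_2$ valued in $Y$. By Theorem~\ref{them:max-pc}, both $X_1$ and $X_2$ are obtained from the trivial maximal prefix code $\{\varepsilon\}$ by finite sequences of caret expansions, and from these one can extract a common caret-refinement $X$ of both. Applying the caret expansions that take $X_1$ up to $X$ as successive instances of (CA1), starting from $t_1$, produces a labelled tree $t_1^{\ast}$ whose underlying prefix code is $X$: a leaf at $x_1 \in X_1$ with label $\ell_1(x_1)$ is expanded so that the resulting leaf at position $x_1 p$, where $p = a_{i_1} \cdots a_{i_k}$, carries the label $\ell_1(x_1)\alpha_{i_1} \cdots \alpha_{i_k}$. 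Build $t_2^{\ast}$ from $t_2$ in the same way.

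The key step is to check that $t_1^{\ast}$ and $t_2^{\ast}$ coincide as labelled trees. The underlying prefix codes are both $X$ by construction; for the labels, we compute them semantically using (CA2). Since $t_1$ evaluates to $t$, iterating (CA2) at the $\lambda$-nodes of $t_1$ forces $\ell_1(x_1) = t\alpha_{j_1} \cdots \alpha_{j_m}$ when $x_1 = a_{j_1} \cdots a_{j_m}$, and composing with the expansion labels shows that the label at $x \in X$ in $t_1^{\ast}$ is simply $t$ followed by the $\alpha$-sequence read off from $x$. The identical calculation applies to $t_2^{\ast}$, and because $t_1 = t = t_2$ the resulting labels agree at every leaf. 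The chain $t_1 \leadsto t_1^{\ast} = t_2^{\ast} \leadsto t_2$, whose first half consists of the CA1-expansions just described and whose second half consists of the reverse CA1-contractions of the expansions used to build $t_2^{\ast}$, then gives the required transformation.

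The main obstacle is conceptual rather than technical: one must be careful that (CA2) is invoked only at the meta-level, to pin down what the labels of $t_1^{\ast}$ and $t_2^{\ast}$ are \emph{forced} to be in the ambient algebra, while every step in the displayed chain from $t_1$ to $t_2$ is a genuine instance of (CA1), used left-to-right on the ascending half and right-to-left on the descending half.
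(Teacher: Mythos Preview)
Your proof is correct and follows essentially the same strategy as the paper's: expand both expressions via (CA1) to a common underlying maximal prefix code, then invoke (CA2) semantically to see that the leaf labels must agree, yielding a chain $t_1 \leadsto t_1^{\ast} = t_2^{\ast} \leadsto t_2$ built entirely from (CA1) moves. The only difference is in the choice of common refinement: the paper simply expands both $t_1$ and $t_2$ until the underlying codes are \emph{uniform} of the same height, at which point Lemma~\ref{lem:trump} makes equality of the codes automatic, whereas you appeal to an unspecified common caret-refinement of $X_1$ and $X_2$. Your appeal to Theorem~\ref{them:max-pc} does not by itself produce such a refinement (it only says each code comes from $\{\varepsilon\}$ by caret expansions), so strictly speaking you owe a sentence here; the easiest fix is precisely the paper's device of passing to the uniform code of sufficiently large height. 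On the other hand, your treatment of the label computation via iterated (CA2) is more explicit than the paper's, which compresses that step into a single sentence.
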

\begin{proof} We use law (CA1) to convert both $t_{1}$ and $t_{2}$ into allowable $\lambda$-expressions $t_{1}'$ and $t_{2}'$, respectively, where the underlying maximal prefix codes
are uniform and have the same height.
By Lemma~\ref{lem:trump}, any two uniform maximal prefix codes of the same height are equal.
We now use (CA2) to deduce that the allowable $\lambda$-expressions  $t_{1}'$ and $t_{2}'$ are, in fact, identical.
We can therefore transform $t_{1}$ into $t_{2}$ using only law (CA1).
\end{proof}

\noindent
{\bf Goal of this section: }we shall construct an $n$-ary Cantor algebra $\mathscr{T}_{n}$ which will prove to be the free $n$-ary Cantor algebra on one generator  
and prove that the Thompson group $G_{n,1}$ is the group
of automorphisms of this algebra.
We shall construct the algebra $\mathscr{T}_{n}$ in two ways: 
first, following Statman, as a monoid and second, following Higman, as a set of labelled trees.

\subsection{A restriction monoid}

By a {\em restriction semigroup} $(S,E)$ or, simply, $S$, 
we mean a semigroup $S$ furnished with a commutative subsemigroup $E$ of the set of all idempotents $\mathsf{E}(S)$,
called the set of {\em projections},
equipped with a unary operation $a \mapsto a^{\ast}$ to the set $E$ satisfying the following axioms:
\begin{description}
\item[RS1] $e^{\ast} = e$ for each $e \in E$.
\item[RS2] $aa^{\ast} = a$.
\item[RS3] $(a^{\ast}b)^{\ast} = (ab)^{\ast}$.
\item[RS4] $a^{\ast}b = b(ab)^{\ast}$.
\end{description}

In Example~\ref{ex:ex1} and Example~\ref{ex:ex2}, 
we construct two examples of such restriction semigroups.

\begin{example}\label{ex:ex1}
{\em In an inverse semigroup, the operation $a \mapsto a^{-1}a$ has all the above properties
with $E$ being all idempotents.
This motivates our whole approach.
A special case is relevant to our work.
Let $X$ be a Boolean space.
Denote by $\mathcal{I}^{\scriptstyle cl}(X)$ the monoid of all homeomorphisms between the clopen subsets of $X$.
This is, in fact, a Boolean inverse monoid. The set of identity functions on the clopen subsets is the set of projections.}
\end{example}

\begin{example}\label{ex:ex2}
{\em We now construct an example of a restriction monoid that is not an inverse monoid.
Again, it is relevant to our work.
Let $X$ be a Boolean space.
Denote by $\mathcal{R}^{\scriptstyle cl}(X)$ the set of surjective local homeomorphisms between the clopen subsets of $X$. 
These functions are open, as is well-known, but they are also closed since their domains are compact and their images are Hausdorff \cite{Willard}.
It is therefore easy to see that $\mathcal{R}^{\scriptstyle cl}(X)$ is a monoid.
Choose for the set of projections the identity functions on the clopen subsets of $X$.
Call this set $E$.
In addition, each element of $\mathcal{R}^{\scriptstyle cl}(X)$ is a finite union of partial homeomorphisms.
If $f \in \mathcal{R}^{\scriptstyle cl}(X)$, define $f^{\ast}$ to be the identity function on the domain of definition of $f$.
It is clear that axioms (RS1), (RS2), (RS3) and (RS4) all hold.
}
\end{example}

There are a number of simple consequences of the above axioms:
\begin{enumerate}
\item For any projection $e$,  we have that $ae = a$ if and only if $a^{\ast} \leq e$.
\item Define $a \leq b$ if and only if $a = ba^{\ast}$.
Observe that $a \leq b$ if and only if $a = be$ for some idempotent $e$ using (1) above.
Then $\leq$ is a partial order on $S$, called the {\em natural partial order}, 
and because of axiom (RS4) it is compatible with the multiplication in $S$ on the right as well as on the left.
\item Suppose that $a,b \leq c$.
Then $a = ca^{\ast}$ and $b = cb^{\ast}$.
It follows that $ab^{\ast} = ba^{\ast}$.
\item Define $a \sim_{l} b$ if and only if $ab^{\ast} = ba^{\ast}$.
We call this the {\em left compatibility relation}.
Observe that $ab^{\ast} = ba^{\ast} = a \wedge b$.
\item If $a, b \in S$ are such that $a^{\ast}b^{\ast} = 0$ we say that $a$ and $b$ are {\em left orthogonal}.
Observe that if $a$ and $b$ are left orthogonal then $a \sim_{l} b$.
\item The projections form an order ideal with respect to the natural partial order.
\end{enumerate}

We shall work with a class of restriction semigroups $(S,E)$ that satisfy the following conditions:
\begin{description}
\item[H1] $S$ has an inverse subsemigroup $T$ (with the elements of $E$ being precisely the joins of finite subsets of $\mathsf{E}(T)$).
\item[H2] Each element of $S$ is a finite non-empty join of left compatible elements of $T$; in fact, left orthogonal elements.
\item[H3] If $a \leq \bigvee_{i} b_{i}$ where $a,b_{i} \in T$ then $a \leq b_{i}$ for some $i$.
\item[H4] $T$ is $E^{\ast}$-unitary.
\end{description}
We call such a restriction semigroup a {\em Higman semigroup}.

The generalizations of the relations $\equiv$ and $\leq_{e}$ to restriction semigroups pose no problems.
Let $S$ be a restriction emigroup.
Define the relation $\equiv$ on $S$ by $a \equiv b$ if and only if
$0 < x \leq a$ implies that $x^{\downarrow} \cap b^{\downarrow} \neq 0$ 
and
$0 < y \leq b$ implies that $y^{\downarrow} \cap a^{\downarrow} \neq 0$.
We write $a \leq_{e} b$ if for all $0 < x \leq b$ we have that $a \wedge x \neq 0$.
If this holds we say that $a$ is {\em essential} in $b$.
Observe that if $a \leq_{e} b$ then $a \equiv b$.

\begin{lemma}\label{lem:essential} 
In any restriction semigroup, we have that $a \leq_{e} b$ if and only if
$a^{\ast} \leq_{e} b^{\ast}$.
\end{lemma}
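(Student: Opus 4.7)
The plan is to mimic the proof of the corresponding inverse-semigroup fact, Lemma~\ref{lem:needed}, with the projection $a^{\ast}$ playing the role of $\mathbf{d}(a)$. First I would record three elementary identities in a restriction semigroup. For any projection $e$, one has $(be)^{\ast} = b^{\ast}e$: applying (RS3) gives $(be)^{\ast} = (b^{\ast}e)^{\ast}$, and $b^{\ast}e$ is itself a projection (a product of two commuting elements of $E$), so equals its own $^{\ast}$ by (RS1). Consequently $b(be)^{\ast} = b\cdot b^{\ast}e = be$, so $be \leq b$. Secondly, since $aa^{\ast} = a$, an element is zero if and only if its projection is zero. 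Thirdly, whenever $a, x \leq b$ the two elements are automatically left compatible (both equal $b$ times their projections), so their meet $a \wedge x = a x^{\ast} = x a^{\ast}$ exists and, by the first identity applied to $ax^{\ast}$, satisfies $(a \wedge x)^{\ast} = a^{\ast}x^{\ast}$.

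Given these preparations, the forward direction is quick. Suppose $a \leq_{e} b$, and let $0 < e \leq b^{\ast}$ with $e$ a projection. Then $be \leq b$ with $(be)^{\ast} = b^{\ast}e = e$, and $be$ is nonzero because its projection $e$ is. The hypothesis therefore delivers $a \wedge be \neq 0$. But $a \wedge be = a(be)^{\ast} = ae$, and $(ae)^{\ast} = a^{\ast}e = a^{\ast} \wedge e$; this projection is therefore nonzero, which is exactly $a^{\ast} \leq_{e} b^{\ast}$.

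The reverse direction is dual: given $a^{\ast} \leq_{e} b^{\ast}$ and any $0 < x \leq b$, the projection $x^{\ast}$ is nonzero and from $x = bx^{\ast}$ we read off $x^{\ast} = b^{\ast}x^{\ast} \leq b^{\ast}$, so by hypothesis $a^{\ast} \wedge x^{\ast} = a^{\ast}x^{\ast} \neq 0$; since $(a \wedge x)^{\ast} = a^{\ast}x^{\ast}$, the meet $a \wedge x$ is itself nonzero. The only real obstacle is isolating the identity $(be)^{\ast} = b^{\ast}e$ from the axioms; after that, the proof is a mechanical translation of the inverse-semigroup argument in Lemma~\ref{lem:needed}, with $^{\ast}$ taking over the role played by $\mathbf{d}$.
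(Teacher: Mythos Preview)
Your proof is correct and follows essentially the same route as the paper's: in both directions one passes between an element $x \leq b$ and its projection $x^{\ast} \leq b^{\ast}$ via $x = b x^{\ast}$, uses that $a,x \leq b$ forces left compatibility so that $(a \wedge x)^{\ast} = a^{\ast}x^{\ast}$, and then reads off the desired nonvanishing. Your version is in fact cleaner than the paper's, since you isolate the identity $(be)^{\ast} = b^{\ast}e$ explicitly and avoid the small slips in the printed argument (e.g.\ the line ``since $b^{\ast} = e$'' should read $x^{\ast} = e$).
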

\begin{proof} Let $a \leq_{e} b$.
We prove that $a^{\ast} \leq_{e} b^{\ast}$.
Let $0 < e \leq b^{\ast}$.
Then $x = be \leq b$
and, since $b^{\ast} = e$, we know that $x$ is non-zero.
Both $x$ and $a$ are bounded above by $b$.
Thus the meet $x \wedge a$ exists and is non-zero.
But $a \sim_{l} x$ and so $(x \wedge a)^{\ast} = a^{\ast}e$ which is non-zero by assumption.
It follows that $x \wedge a$ is non-zero.
To prove the converse, suppose that $a^{\ast} \leq_{e} b^{\ast}$.
Let $0 < x \leq b$.
Then $0 < x^{\ast} \leq b^{\ast}$.
It follows that $a^{\ast}x^{\ast} \neq 0$.
But $a \sim_{l} x$ and so $(a \wedge x)^{\ast} = a^{\ast}x^{\ast}$.
Thus $a \wedge x$ is also non-zero.
\end{proof}

\begin{lemma}\label{lem:biscuits} Let $S$ be a Higman semigroup.
\begin{enumerate} 
\item $\equiv$ is a $0$-restricted congruence.
\item If $a \equiv b$ and $a$ is a projection then $b$ is a projection
\item If $a \equiv b$ then $a \sim_{l} b$.
\item $a \equiv b$ if and only if there exists $c \leq_{e} a,b$.
\end{enumerate}
\end{lemma}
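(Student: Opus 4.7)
The plan is to prove the four parts in the stated order, each building on the previous. Parts (1) and (2) are restriction-semigroup analogs of standard facts about the Lenz congruence on inverse semigroups (cf.\ Lemma~\ref{lem:panda} and Lemma~\ref{lem:important-tool}), while part (3) is the counterpart of ``idempotent-pure implies $\sim$'' in the restriction setting and is where the real work lies; part (4) then falls out by combining (3) with an essential-idempotent calculation in the spirit of Lemma~\ref{lem:new-characterization}.

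For (1), $0$-restrictedness is immediate: if $a \equiv 0$ with $a \neq 0$, then $a \in a^{\downarrow}$ but $a \notin 0^{\downarrow} = \{0\}$. The congruence property is the standard Lenz manipulation: given $a \equiv b$ and $c \in S$, for any $0 < x \leq ac$ one extracts a nonzero element beneath $a$ using $x = a(cx^{\ast})$, applies $a \equiv b$, and pushes the witness back up by right multiplication by $c$; the left case is symmetric. For (2), following Lemma~\ref{lem:panda}, assume $a \equiv e$ with $e$ a projection, and write $a = \bigvee_i a_i$ with $a_i \in T$ by (H2). For each nonzero $a_i$, $\equiv$ supplies a nonzero $z \leq a_i, e$; decomposing $z$ as a $T$-join and invoking (H3) produces a nonzero $c \in T$ with $c \leq a_i, e$. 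Since $e$ is itself a finite join of projections from $\mathsf{E}(T)$, another pass through (H3) places $c$ beneath some projection of $T$, forcing $c$ to be a projection. Then (H4) (the $E^{\ast}$-unitarity of $T$) makes $a_i$ a projection, and thus $a \in E$.

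Part (3) is the crux and the main obstacle, because $a^{-1}$ is not available and one must work with the Higman decomposition directly. A routine computation with RS3 and the commutativity of projections shows $(ab^{\ast})^{\ast} = (ba^{\ast})^{\ast} = a^{\ast}b^{\ast}$; the goal is $ab^{\ast} = ba^{\ast}$. Decompose $a = \bigvee_i a_i$ and $b = \bigvee_j b_j$ left-orthogonally in $T$ by (H2). The key claim is that whenever $a_i^{\ast} b_j^{\ast} \neq 0$, the $T$-elements $a_i$ and $b_j$ share a nonzero lower bound in $T$: the element $a_i \cdot a_i^{\ast} b_j^{\ast}$ is nonzero and below $a$, so by $a \equiv b$ admits a nonzero refinement $z$ below $b$; writing $z$ as a left-orthogonal $T$-join, each component $z_l$ satisfies $z_l \leq a_i$ and, by (H3), also $z_l \leq b_k$ for some $k$, which the left-orthogonality of the $b_j$'s combined with $z_l^{\ast} \leq a_i^{\ast} b_j^{\ast}$ pins down as $k = j$. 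A common nonzero lower bound in the inverse semigroup $T$ yields $a_i \sim b_j$, whence $a_i b_j^{\ast} = b_j a_i^{\ast}$ by Lemma~\ref{lem:compatible-meets}. Distributing multiplication over the finite orthogonal joins---a property of Higman semigroups inherited from $T$---then delivers $ab^{\ast} = \bigvee_{i,j} a_i b_j^{\ast} = \bigvee_{i,j} b_j a_i^{\ast} = ba^{\ast}$.

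For (4), the direction $(\Leftarrow)$ is transparent since $\leq_{e}$ implies $\equiv$, so transitivity finishes it. For $(\Rightarrow)$, by (3) the meet $c := a \wedge b = ab^{\ast} = ba^{\ast}$ exists. To verify $c \leq_{e} a$, Lemma~\ref{lem:essential} reduces the question to $a^{\ast}b^{\ast} \leq_{e} a^{\ast}$: for any $0 < e \leq a^{\ast}$, the nonzero element $ae \leq a$ has, by $a \equiv b$, a nonzero refinement $z \leq ae, b$; then $z^{\ast} \leq e$ and $z^{\ast} \leq b^{\ast}$, so $z^{\ast}$ is a nonzero projection below $eb^{\ast}$, forcing $eb^{\ast} \neq 0$. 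The case $c \leq_{e} b$ is symmetric.
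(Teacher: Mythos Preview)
Your sketch for part (1) has a genuine gap. The identity $x = a(cx^{\ast})$ is a tautology, and in a one-sided restriction semigroup it does not by itself produce a nonzero element below $a$: the element $cx^{\ast}$ is not a projection, so $a(cx^{\ast})$ need not lie in $a^{\downarrow}$, and the obvious candidate $a(cx^{\ast})^{\ast}$ can vanish (the domain of $cx^{\ast}$ may be disjoint from that of $a$). There is no ``standard Lenz manipulation'' available here because the Lenz argument for inverse semigroups uses inverses, which $S$ lacks. The paper's proof supplies exactly the missing step: it decomposes $a,b,c$ as joins of elements of $T$ via (H2), reduces via (H3) to $x \leq a_{i}c_{k}$ with $x \in T$, and then uses the inverse $c_{k}^{-1}$ inside $T$ to produce $0 < xc_{k}^{-1} \leq a_{i}$. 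You need to say this.

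Your part (3) is a genuinely different route from the paper and it works, but you should flag that the implication ``a common nonzero lower bound in $T$ yields $a_{i} \sim b_{j}$'' is precisely where (H4) enters: from $0 \neq z \leq a_{i},b_{j}$ one gets $0 \neq z^{-1}z \leq a_{i}^{-1}b_{j}$, and $E^{\ast}$-unitarity of $T$ then forces $a_{i}^{-1}b_{j}$ to be an idempotent. The paper instead right-multiplies $a_{i} \equiv \bigvee_{j} b_{j}\mathbf{d}(a_{i})$ by $a_{i}^{-1}$, obtaining $\mathbf{r}(a_{i}) \equiv \bigvee_{j} b_{j}a_{i}^{-1}$, and then invokes part (2) to conclude each $b_{j}a_{i}^{-1}$ is an idempotent. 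The paper's argument is shorter and recycles (2) neatly; yours is more hands-on and makes the role of the orthogonality of the $b_{j}$ and of (H4) more visible.

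Parts (2) and (4) are fine and close to the paper; for (4) the paper argues slightly more directly (using $ab^{\ast} \equiv a$ and $0$-restrictedness rather than passing through Lemma~\ref{lem:essential}), but both are correct.
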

\begin{proof} (1) It is clear that $\equiv$ is an equivalence relation.
Let $a \equiv b$ and let $c$ be any element.
Then both  $ac \equiv bc$ and $ca \equiv cb$ hold:
to prove that,  we devolve down into the inverse semigroup $S$ and use the properties there.
Let $a = \bigvee_{i} a_{i}$, $b = \bigvee_{j} b_{j}$ and $c = \bigvee_{k} c_{k}$
where $a_{i}, b_{j}, c_{k} \in T$.
Suppose that $a \equiv b$.
We prove that $\bigvee_{i,k} a_{i} c_{k} \equiv \bigvee_{j,k} b_{j}c_{k}$.
It is enough to work with elements of $T$.
Suppose that $0 < x \leq \bigvee_{i,k} a_{i} c_{k} $ where $x \in T$.
Then $x \leq a_{i}c_{k}$ for some $i$ and $k$.
We therefore have that $0 < xc_{k}^{-1} \leq a_{i}$.
By assumption, there exists some non-zero $z \in T$ and some $j$ such that $z \leq xc_{k}^{-1}, b_{j}$.
It follows that $zc_{k}$ is non-zero and $zc_{k} \leq x, b_{j}c_{k}$.
The proof that $ac \equiv bc$ now follows by symmetry as does the proof that $ca \equiv cb$.
The congruence $\equiv$ is $0$-restricted by construction.

(2) Suppose that $a \equiv b$ and $a$ is a projection.
Then $\bigvee_{i} e_{i} \equiv \bigvee_{j} b_{j}$ where $e_{i}, b_{j} \in T$
and the $e_{i}$ are projections.
For each $j$, we have that $b_{j}$ is a non-zero element below the righthand side.
It follows that there is an idempotent $e$ in $T$ such that $e \leq e_{i}, b_{j}$.
We now use the fact that $T$ is $E^{\ast}$-unitary to deduce that $b_{j}$ is a projection.
It follows that the righthand side is a projection.

(3) Suppose that $a \equiv b$. We shall prove that $a \sim_{l} b$.
Let $a = \bigvee_{i} a_{i}$ and $b = \bigvee_{j} b_{j}$ where $a_{i}, b_{j} \in T$.
Multiply both sides by $\mathbf{d}(a_{i})$ and use the fact that our joins are left orthogonal joins to get
$a_{i} \equiv \bigvee_{j} b_{j}\mathbf{d}(a_{i})$.
Now multiply each side by $a_{i}^{-1}$.
Using (2) above, we deduce that $b_{j}a_{i}^{-1}$ is a projection and so an idempotent.
Thus for each $i$ we always have that $b_{j}a_{i}^{-1}$ is an idempotent.
This proves that $a_{i} \sim_{l} b_{j}$; see, \cite[Section~1.4]{Lawson1998}.
The proof of the claim is now immediate.

(4) Suppose that $a \equiv b$.
Then $a \sim_{l} b$ by (3).
Thus $c = ab^{\ast} = ba^{\ast} = a \wedge b$.
We prove that $c \leq_{e} a$.
Let $0 < x \leq a$. 
Observe that $ab^{\ast} \equiv a$ and so $ab^{\ast}x^{\ast}  \equiv x$.
But $x \leq 0$ and $\equiv$ is $0$-restricted and so $ab^{\ast}x^{\ast} = (a \wedge b) \wedge x \neq 0$.
By symmetry, $c \leq_{e} b$.
The converse is immediate.
\end{proof}

Our next result attests to the uniqueness of the congruence $\equiv$ on Higman semigroups.

\begin{proposition}\label{prop:annie}
Let $\rho$ be any congruence on a Higman semigroup which has the following properties:
\begin{enumerate}
\item It is $0$-restricted.
\item If $a \, \rho \, b$ then $a \sim_{l} b$.
\item If $a \leq_{e} b$ then $a \, \rho \, b$.
\end{enumerate}
Then $\rho \,=\, \equiv$.
\end{proposition}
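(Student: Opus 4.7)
The plan is to prove the two inclusions $\equiv \,\subseteq\, \rho$ and $\rho \,\subseteq\, \equiv$ separately, each following quickly from one of the three listed properties plus Lemma~\ref{lem:biscuits}(4).

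For the inclusion $\equiv \,\subseteq\, \rho$, I would simply quote Lemma~\ref{lem:biscuits}(4): if $a \equiv b$ then there exists $c$ with $c \leq_e a$ and $c \leq_e b$. By hypothesis~(3), both $c \,\rho\, a$ and $c \,\rho\, b$, and transitivity of $\rho$ yields $a \,\rho\, b$. This is the easy half.

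For the converse inclusion $\rho \,\subseteq\, \equiv$, suppose $a \,\rho\, b$. By hypothesis~(2), $a \sim_l b$, so the meet $c = a \wedge b = ab^{\ast} = ba^{\ast}$ exists. The key claim is that $c \leq_e a$ (and then, by symmetry, $c \leq_e b$), which by Lemma~\ref{lem:biscuits}(4) will give $a \equiv b$. To verify the claim, first observe that multiplying $a \,\rho\, b$ on the right by the projection $a^{\ast}$ (using that $\rho$ is a congruence) and applying axiom~(RS2) yields $a = aa^{\ast} \,\rho\, ba^{\ast} = c$. Now take any $0 < x \leq a$; I want $x \wedge c \neq 0$. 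Multiplying $a \,\rho\, c$ on the right by $x^{\ast}$ gives $ax^{\ast} \,\rho\, cx^{\ast}$, and since $x \leq a$ we have $ax^{\ast} = x$, so $x \,\rho\, cx^{\ast}$. But $x, c \leq a$ forces $x \sim_l c$, so $cx^{\ast} = x \wedge c$. Thus $x \,\rho\, (x \wedge c)$, and because $x \neq 0$ and $\rho$ is $0$-restricted by hypothesis~(1), we conclude $x \wedge c \neq 0$, as required.

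The only potential wrinkle I anticipate is making sure that multiplication by a projection on the right of a $\rho$-related pair is legitimately a congruence operation (it is, since $\rho$ is a congruence on the whole restriction semigroup $S$) and that the meet identities $ax^{\ast} = x$ for $x \leq a$ and $cx^{\ast} = x \wedge c$ for $c \sim_l x$ are used in the right form; these follow, respectively, from the definition of the natural partial order and from the general identity $a \wedge b = ab^{\ast}$ for left-compatible elements noted after the restriction-semigroup axioms. Apart from these bookkeeping points, the argument is a direct application of the congruence property, $0$-restrictedness, and Lemma~\ref{lem:biscuits}(4); no use of the Higman structure (H1)--(H4) beyond what is already built into Lemma~\ref{lem:biscuits} is needed.
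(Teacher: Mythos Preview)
Your proof is correct and follows essentially the same route as the paper: both directions use Lemma~\ref{lem:biscuits}(4), and for the inclusion $\rho \subseteq\,\equiv$ both arguments set $c = ab^{\ast} = ba^{\ast}$ and verify $c \leq_{e} a$ by multiplying the relation $a\,\rho\,b$ on the right by suitable projections and invoking $0$-restrictedness. Your write-up in fact spells out the step ``$x \,\rho\, x \wedge c$'' more carefully than the paper does, by first deriving $a \,\rho\, c$ and then multiplying by $x^{\ast}$.
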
 
\begin{proof} We prove first that $\equiv$ is contained in $\rho$.
Let $a \equiv b$.
Then  by Lemma~\ref{lem:biscuits}, there exists $c \leq_{e} a,b$.
Thus, by assumption, $c \, \rho \, a$ and $c \, \rho \, b$.
It follows that $a \, \rho\,  b$.
We now prove the converse.
Let $a \, \rho\,  b$.
By assumption, $a \sim_{l} b$.
We prove that 
$c = ab^{\ast} = ab^{\ast} \leq_{e} a,b$.
It is enough to prove that $c \leq_{e} a$.
Let $0 < x \leq a$.
We have that $x \sim_{l} c$.
Thus $x \wedge c = cx^{\ast}$.
It follows that $x \, \rho\, x \wedge c$.
We use the fact that $\rho$ is $0$-restricted to deduce that $x \wedge c \neq 0$.
It follows that $c \equiv a$ and $c \equiv b$ and so $a \equiv b$, as required.
\end{proof}

The following is now immediate by Lemma~\ref{lem:essential} and Lemma~\ref{lem:biscuits}.

\begin{lemma}\label{lem:sherlock} 
In a Higman semigroup, $a \equiv b$ implies that $a^{\ast} \equiv b^{\ast}$.
\end{lemma}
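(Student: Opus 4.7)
The plan is to chain together the two lemmas cited immediately before the statement, exploiting the characterization of $\equiv$ via a common essential lower bound.

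First I would invoke Lemma~\ref{lem:biscuits}(4) applied to $a \equiv b$ to produce a witness $c$ with $c \leq_{e} a$ and $c \leq_{e} b$. This is the key step because it reduces the problem from one about the rather implicit congruence $\equiv$ to one about the concrete relation $\leq_{e}$, which interacts well with the projection operation $\ast$.

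Next I would apply Lemma~\ref{lem:essential} to each inequality separately, obtaining $c^{\ast} \leq_{e} a^{\ast}$ and $c^{\ast} \leq_{e} b^{\ast}$. So $c^{\ast}$ is a common essential lower bound of $a^{\ast}$ and $b^{\ast}$.

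Finally, I would apply Lemma~\ref{lem:biscuits}(4) in the reverse direction to conclude $a^{\ast} \equiv b^{\ast}$, which is exactly what we want. There is no real obstacle here; the argument is essentially the observation that the bijection $c \mapsto c^{\ast}$ on essential bounds preserves the $\leq_{e}$ relation, together with the fact that $\equiv$ has been characterized in terms of the existence of such a bound.
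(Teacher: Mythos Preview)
Your argument is correct and is precisely what the paper intends: the paper's proof consists only of the sentence ``immediate by Lemma~\ref{lem:essential} and Lemma~\ref{lem:biscuits}'', and you have simply spelled out how those two lemmas chain together via a common essential lower bound $c$ and its projection $c^{\ast}$.
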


We shall work with a monoid $M_{n}$ constructed from the free monoid $A_{n}^{\ast}$ that contains the inverse monoid $D_{n}$
but which is itself not inverse.
Define $M_{n}$ to be the set of all surjective morphisms between finitely generated right ideals together with the empty function.
Suppose $\theta \colon R_{1} \rightarrow R_{2}$.
We can write $R_{1} = XA_{n}^{\ast}$ where $X$ is a finite prefix code in $A_{n}^{\ast}$. by Lemma~\cite[Lemma~A.1]{Birget}.
Put $Y = \theta (X)$.
Then 
 $\theta \colon XA_{n}^{\ast} \rightarrow YA_{n}^{\ast}$
 where $X$ is a prefix code and $Y = \theta (X)$.
 Observe that we cannot assume that $Y$ is a prefix code and so it is just an
 arbitrary finite subset of $A_{n}^{\ast}$.
 
 \begin{proposition}\label{prop:monoid} 
 $M_{n}$ is a Higman monoid 
 \end{proposition}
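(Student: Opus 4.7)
The plan is to take $T = P_{n}$ as the inverse subsemigroup of $M_{n}$ witnessing the Higman structure. As a preliminary, one verifies that $M_{n}$ is itself a restriction monoid by defining $\theta^{\ast} = 1_{\mathrm{dom}(\theta)}$ (the identity map on the domain of $\theta$) and checking axioms (RS1)--(RS4); this is routine because elements of $M_{n}$ are partial functions between subsets of $A_{n}^{\ast}$, and the verification runs in direct parallel with Example~\ref{ex:ex2}. The projections of $M_{n}$ are then exactly the identity maps $1_{R}$ for $R$ a finitely generated right ideal, and $P_{n}$ is visibly an inverse submonoid of $M_{n}$ since bijective morphisms between principal right ideals are in particular surjective morphisms between finitely generated right ideals.

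The four Higman axioms are then checked in turn. For (H1), any projection $1_{R}$ may be written with $R = XA_{n}^{\ast}$ for a finite prefix code $X$ by Lemma~\ref{lem:colson}, and then $1_{R} = \bigvee_{x \in X} xx^{-1}$ is a join (taken in $M_{n}$) of idempotents from $\mathsf{E}(P_{n})$; conversely, every such join is manifestly a projection. For (H2), given $\theta \colon XA_{n}^{\ast} \to YA_{n}^{\ast}$ in $M_{n}$ with $X = \{x_{1}, \ldots, x_{m}\}$ a prefix code and $y_{i} = \theta(x_{i})$, decompose $\theta = \bigvee_{i=1}^{m} y_{i}x_{i}^{-1}$; each summand is in $P_{n}$, and pairwise left orthogonality of the summands follows from prefix incomparability of the $x_{i}$ via Lemma~\ref{lem:orthogonality}, noting that on the inverse subsemigroup $P_{n}$ the restriction operation $^{\ast}$ coincides with $a \mapsto a^{-1}a$. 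Axiom (H3) is exactly Proposition~\ref{prop:skyr}(2), and (H4) is exactly the $E^{\ast}$-unitarity of $P_{n}$ recorded in Proposition~\ref{prop:skyr}(3) and already observed directly after the definition of $P_{n}$.

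The only real subtlety lies in (H2): one must argue that the indicated decomposition is a genuine join in the restriction-semigroup sense in $M_{n}$ and that this join coincides with $\theta$. Since elements of $M_{n}$ are partial functions and the domains $x_{i}A_{n}^{\ast}$ are pairwise disjoint (by prefix incomparability), the join $\bigvee_{i} y_{i}x_{i}^{-1}$ is nothing other than the set-theoretic union of these partial bijections. This union has domain $XA_{n}^{\ast}$, is a morphism onto $YA_{n}^{\ast}$ because each piece is, and recovers $\theta$ by construction; moreover it lies in $M_{n}$ because its domain is a finitely generated right ideal and its image is the right ideal generated by $Y = \theta(X)$. Once these observations are assembled, all four axioms are in place and $M_{n}$ is a Higman monoid.
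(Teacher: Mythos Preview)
Your approach is the same as the paper's and the verification of (H1)--(H4) is correctly organized, with the citations of Lemma~\ref{lem:colson}, Lemma~\ref{lem:orthogonality}, and Proposition~\ref{prop:skyr} doing exactly the right work.  There is, however, one point you pass over too quickly: the claim that $M_{n}$ is a \emph{monoid} at all.  You say the restriction-monoid verification ``runs in direct parallel with Example~\ref{ex:ex2}'', but that example establishes closure under composition using topological facts (local homeomorphisms compose, images and preimages of clopen sets are clopen).  For $M_{n}$ one needs the algebraic analogue: if $\alpha\colon R_{1}\to R_{2}$ is a surjective morphism of finitely generated right ideals and $Z\subseteq R_{2}$ is a finitely generated right ideal, then $\alpha^{-1}(Z)$ is again finitely generated.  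This is what guarantees that the domain of a composite lies in $M_{n}$, and it is not automatic from the partial-function framework.

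The paper proves this explicitly by a case analysis on prefix comparability between the generators of $Z$ and the images $\alpha(x)$ of the generators of $R_{1}$, exhibiting $\alpha^{-1}(zA_{n}^{\ast})$ as a finite union of principal right ideals.  Once this is in hand, your argument (and the paper's) that (RS1)--(RS4) hold is indeed routine, since they are general facts about monoids of partial functions with $\theta^{\ast}=1_{\mathrm{dom}(\theta)}$.  So the gap is one of missing detail rather than a wrong idea, but you should either supply the preimage computation or at least flag that closure under composition needs its own argument and does not follow from Example~\ref{ex:ex2}.
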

 \begin{proof} We use again the fact that the intersection of two finitely generated right ideals is a finitely generated right ideal.
 Let $\alpha \colon R_{1} \rightarrow R_{2}$ be a surjective morphism of finitely generated right ideals.
 Let $X'A_{n}^{\ast} \subseteq R_{1}$ be a finitely generated right ideal.
 Then $\alpha (X'A_{n}^{\ast}) = \alpha (X')A_{n}^{\ast}$ is a finitely generated right ideal.
 Now let $ZA_{n}^{\ast} \subseteq R_{2}$ be a finitely generated right ideal.
 Then $\alpha^{-1}(ZA_{n}^{\ast}) \subseteq R_{1}$ is a right ideal.
 We need to prove that it is finitely generated.
 Let $R_{2} = YA_{n}^{\ast}$ where $Y$ is a finite set,
 $R_{1} = XA_{n}^{\ast}$ where $X$ is a finite set,
 and $Y = \alpha (X)$.
 We have that $ZA_{n}^{\ast} \subseteq YA_{n}^{\ast}$.
 Let $z \in Z$.
 It is enough to prove that $\alpha^{-1}(zA_{n}^{\ast})$ is a finitely generated right ideal.
 We write $Y$ as a disjoint union $Y' \cup Y'' \cup Y'''$.
 The set $Y'''$ consists of those $y \in Y$ such that $z$ and $y$ are prefix incomparable.
 The set $Y'$ consists of those $y \in Y$ such that $z = yu$ for some string $u$ possibly empty.
 The set $Y''$ consists of those $y$ such that $y = zu$ for some non-empty string $u$. 
 Let $Y' = \{y_{1},\ldots, y_{s}\}$ where $z = y_{i}u_{i}$ and $1 \leq i \leq s$.
 Define $X_{i}$ to be the set $X_{i} = \alpha^{-1}(y_{i}) \cap X$ where $1 \leq i \leq s$.
 Let $Y'' = \{y_{s+1}, \ldots,  y_{s+j}, \ldots, y_{t}\}$ where $1 \leq j \leq t - s$ and $z$ is a proper prefix of $y_{s+j}$.
 Define $X_{j} = \alpha^{-1}(y_{s+j}) \cap X$ where $1 \leq j \leq t-s$.
 It is now routine to check that
 $$\alpha^{-1}(zA_{n}^{\ast}) = [(X_{1}u_{1} \cup \ldots \cup X_{s}u_{s}) \cup (X_{s+1} \cup \ldots \cup X_{t})]A_{n}^{\ast}.$$ 
 It is now easy to show that $M_{n}$ really is a monoid.
 It contains a copy of $P_{n}$.
  Let $\theta \colon XA_{n}^{\ast} \rightarrow YA_{n}^{\ast}$ be a surjective morphism.
 Let $X = \{x_{1}, \dots, x_{m}\}$ and $Y = \{y_{1}, \dots, y_{m}\}$ where we count multiplicities in $Y$.
 Let $x_{i}u \in XA_{n}^{\ast}$.
 Then $\theta (x_{i}u) = \theta (x_{i})u = y_{i}u$.
 It follows that $y_{i}x_{i}^{-1}(x_{i}u) = y_{i}u$.
 Thus $\theta = \bigcup_{i=1}^{m} y_{i}x_{i}^{-1}$.
 Suppose that $uv^{-1} \subseteq \bigcup_{i=1}^{m} y_{i}x_{i}^{-1}$.
Then $v = x_{i}p$ for some $x_{i} \in X$ and some string $p$.
We deduce that $u = y_{i}p$.
It follows that $uv^{-1} \leq y_{i}x_{i}^{-1}$ for some $i$.

 The intersection of any finite set of finitely generated right ideals is a finitely generated right ideals.
 The set of identity functions defined on such right ideals is therefore an appropriate set of projections.
 If $\theta \in M_{n}$, define $\theta^{\ast}$ to be the identity function defined on the domain of $\theta$.
 It remains to check that axioms (RS1)--(RS4) all hold.
 If the element of $M_{n}$ is $\bigvee_{i} y_{i}x_{i}^{-1}$ then
 $$\left( \bigvee_{i} y_{i}x_{i}^{-1} \right)^{\ast} = \bigvee_{i} x_{i}x_{i}^{-1}.$$
 Axioms (RS1) and (RS2) are easy to prove.
 We show that (RS3) holds.
 Let $a = \bigvee_{i} y_{i}x_{i}^{-1}$ and $b  = \bigvee_{j} v_{j}u_{j}^{-1}$. 
 Now $ab = \bigvee_{i,j} y_{i}x_{i}^{-1}v_{j}u_{j}^{-1}$ and $a^{\ast}b = \bigvee_{i,j} x_{i}x_{i}^{-1}v_{j}u_{j}^{-1}$.
 We now look at cases.
 Suppose that $v_{j} = x_{i}p$.
 Then 
 $y_{i}x_{i}^{-1}v_{j}u_{j}^{-1} = y_{i}pu_{j}^{-1}$
 and
 $x_{i}x_{i}^{-1}v_{j}u_{j}^{-1} = x_{i}pu_{j}^{-1}$.
 On symmetry grounds it is now clear that $(a^{\ast}b)^{\ast} = (ab)^{\ast}$.
 We show that (RS4) holds.
 Let $a = \bigvee_{i} y_{i}x_{i}^{-1}$ and $b  = \bigvee_{j} v_{j}u_{j}^{-1}$. 
 We have that
 $a^{\ast}b = \bigvee_{i,j} x_{i}x_{i}^{-1} v_{j}u_{j}^{-1}$
 and
 $b(ab)^{\ast} = \left( \bigvee_{j} v_{j}u_{j}^{-1} \right) \left( \bigvee_{i,k} x_{i} x_{i}^{-1} v_{k} u_{k}^{-1}  \right)$.
 It is clear that $a^{\ast}b \leq b(ab)^{\ast}$.
 The reverse inequality follows by looking at cases.
 \end{proof}
 
Each non-zero element of $M_{n}$ can be regarded as having a domain generated by a prefix code.
Thus it can be regarded as a tree whose leaves are labelled by elements of the free monoid:
the tree itself is nothing other than the usual tree representation of the prefix code $X$ and the leaves are the elements of $Y$.
We may therefore regard an element of $M_{n}$ as what we call a {\em symbol}
$\left( \frac{X}{Y} \right)$   
where $X = \{x_{1}, \ldots, x_{m}\}$ is a prefix code and $Y = \{y_{1}, \ldots, y_{m}\}$ is a subset of $A_{n}^{\ast}$ counting multiplicities.
Despite the notation, both sets $X$ and $Y$ are, in fact, ordered.
The actual element of $M_{n}$ is then the partial function
$$F \left( \frac{X}{Y} \right)   = \bigcup_{i=1}^{m} y_{i}x_{i}^{-1}$$
where $F \left( \frac{X}{Y} \right) \colon XA_{n}^{\ast} \rightarrow YA_{n}^{\ast}$ is given by
$F \left( \frac{X}{Y} \right) (x_{i}u) = y_{i}u$ where $u \in A_{n}^{\ast}$.
It is useful to define
$\left( \frac{\varnothing}{Y} \right)$
to be the empty partial function.   
Lemma~\ref{lem:needed-nec} below shows that we can regard elements of the monoid $M_{n}$ as symbols.
This will become important later.

\begin{lemma}\label{lem:needed-nec} 
There is a bijection between the set of symbols and the set of elements of $M_{n}$.
\end{lemma}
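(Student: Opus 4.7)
The plan is to exhibit explicit mutually inverse maps between the two sets, separating off the zero element, and then using the uniqueness of the prefix-code generating set of a finitely generated right ideal in $A_{n}^{\ast}$ to handle the non-zero part.

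For surjectivity of the symbol-to-element map, fix a non-zero $\theta \in M_{n}$ with domain $R$. Since $R$ is a finitely generated right ideal of $A_{n}^{\ast}$, I would apply Lemma~\ref{lem:colson} to choose a finite prefix code $X = \{x_{1}, \ldots, x_{m}\}$ with $R = XA_{n}^{\ast}$. Put $y_{i} = \theta(x_{i})$ and $Y = (y_{1}, \ldots, y_{m})$ (counted with multiplicities, since $\theta$ need not be injective). Then $\left(\frac{X}{Y}\right)$ is a symbol, and it maps to $\theta$: every $z \in R$ admits a unique factorization $z = x_{i}u$ because $X$ is a prefix code, and then $F\bigl(\frac{X}{Y}\bigr)(z) = y_{i}u = \theta(x_{i})u = \theta(x_{i}u) = \theta(z)$, using that $\theta$ is a morphism. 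The zero of $M_{n}$ is the image of the symbol $\left(\frac{\varnothing}{\varnothing}\right)$.

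For injectivity, I would first note the following uniqueness statement: if $X$ and $X'$ are finite prefix codes in $A_{n}^{\ast}$ with $XA_{n}^{\ast} = X'A_{n}^{\ast}$ then $X = X'$. Indeed, given $x \in X$ we can write $x = x'u$ with $x' \in X'$, $u \in A_{n}^{\ast}$, and symmetrically $x' = xv$; substituting gives $x = xvu$ and by left-cancellation in the free monoid $vu = \varepsilon$, so $u = v = \varepsilon$ and $x = x' \in X'$. Symmetry closes the argument. Now suppose two non-zero symbols $\left(\frac{X}{Y}\right)$ and $\left(\frac{X'}{Y'}\right)$ produce the same element $\theta$. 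Then they have equal domain, so $XA_{n}^{\ast} = X'A_{n}^{\ast}$, whence $X = X'$ as prefix codes by the remark just made; and for each $x_{i}$ the corresponding entry of $Y$ is forced to equal $\theta(x_{i})$, which coincides with the corresponding entry of $Y'$. Thus the two symbols are identical.

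The main (and essentially only) obstacle is a notational one, namely clarifying what is meant by a \emph{symbol}: reading $\left(\frac{X}{Y}\right)$ as the graph $\{(x_{i}, y_{i}) : 1 \leq i \leq m\}$ of the labelling map from the prefix code $X$ to $A_{n}^{\ast}$ (equivalently, two ordered lists up to simultaneous reordering) makes the correspondence canonical. Once this interpretation is fixed, the two constructions above are inverse to each other by construction, and the lemma follows.
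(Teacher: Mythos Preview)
Your proof is correct and follows essentially the same route as the paper's. The paper's argument is terser: it assumes surjectivity (which was implicit in the discussion preceding the lemma) and for injectivity argues directly from two join representations $\bigvee y_{i}x_{i}^{-1} = \bigvee v_{j}u_{j}^{-1}$ that each $x_{i}x_{i}^{-1}$ lies below some $u_{j}u_{j}^{-1}$, whence the prefix relations force $X = U$ and then $Y = V$ --- exactly your ``same domain, hence same prefix code, hence same labels'' argument phrased in terms of the idempotent ordering rather than the right ideal.
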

\begin{proof} Suppose that $\bigvee_{i=1}^{p} y_{i}x_{i}^{-1} = \bigvee_{j=1}^{q} v_{j}u_{j}^{-1}$.
We shall prove directly that their respective symbols are equal.
For each $i$, there exists $j$ such that $x_{i}x_{i}^{-1} \leq u_{j}u_{j}^{-1}$.
Thus $x_{i}$ is a prefix of $u_{j}$.
By the same token, $u_{j}$ is a prefix of some $x_{k}$.
It now follows that $X = U$.
Once this is established it is routine to check that $Y = V$.
\end{proof}

We shall now define a homomorphism from the monoid $M_{n}$ to the monoid
of all surjective local homeomorphisms between the clopen subsets of $A_{n}^{\omega}$.
As above,  we denote by  $F\left( \frac{X}{Y} \right)$ the partial function induced on the set $A_{n}^{\omega}$ by
 the symbol $\left( \frac{X}{Y} \right)$.
 The proof of the following is just a special case of \cite[Lemma~9.16]{LV2020a}.

\begin{lemma}\label{lem:watson} In the monoid $M_{n}$ we have that
$\bigvee_{i}x_{i}x_{i}^{-1} \equiv \bigvee_{j} y_{j}y_{j}^{-1}$ if and only if $XA_{n}^{\omega} = YA_{n}^{\omega}$.
\end{lemma}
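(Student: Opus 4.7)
The plan is to exploit Lemma~\ref{lem:biscuits}(4): writing $e_X = \bigvee_i x_ix_i^{-1}$ and $e_Y = \bigvee_j y_jy_j^{-1}$, we have $e_X \equiv e_Y$ if and only if there is some $c \leq_e e_X, e_Y$. Both sides of the equivalence we want to prove are projections in $M_n$, and any element below a projection in a restriction monoid is itself a projection (by the natural order $a = ba^*$), so we may work entirely within the lattice of projections, identified with the identities on finitely generated right ideals of $A_n^*$. Meets of projections coincide with products, and hence with intersections of the underlying right ideals.

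For $(\Leftarrow)$, suppose $XA_n^\omega = YA_n^\omega$. Because free monoids are singly aligned, the intersection $XA_n^* \cap YA_n^*$ is finitely generated, so by Lemma~\ref{lem:colson} it has the form $WA_n^*$ for some prefix code $W$. Set $c = \bigvee_{w \in W} ww^{-1}$. To show $c \leq_e e_X$, take any non-zero projection $z \leq e_X$ and write $z = \bigvee_{v \in V} vv^{-1}$ with $V \subseteq XA_n^*$. For any $v \in V$, any infinite continuation $\xi \in vA_n^\omega \subseteq XA_n^\omega = WA_n^\omega$ has some $w \in W$ as a prefix, so $v$ and $w$ are prefix comparable, and the longer of $v,w$ yields a non-zero idempotent below both $z$ and $c$. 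Hence $c \leq_e e_X$; symmetrically $c \leq_e e_Y$, and Lemma~\ref{lem:biscuits}(4) gives $e_X \equiv e_Y$.

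For $(\Rightarrow)$, suppose $e_X \equiv e_Y$ and invoke Lemma~\ref{lem:biscuits}(4) to pick a projection $c = \bigvee_{w \in W} ww^{-1}$ with $c \leq_e e_X, e_Y$. Let $\xi \in XA_n^\omega$, so some $x_i$ is a prefix of $\xi$. Choose a prefix $p$ of $\xi$ extending $x_i$ with $|p| > \max_{w \in W}|w|$; then $pp^{-1} \leq e_X$, so by essentiality $pp^{-1} \cdot c \neq 0$. Thus some $w \in W$ is prefix comparable with $p$, and the length bound forces $w$ to be a prefix of $p$, hence of $\xi$. So $\xi \in WA_n^\omega \subseteq YA_n^\omega$; the reverse inclusion follows by symmetry.

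The only point requiring real care is that $M_n$ is a restriction monoid rather than an inverse monoid, so one must verify that elements of $M_n$ below an idempotent of $D_n$ are themselves idempotents of $D_n$ and that the meet operation on such projections agrees with intersection of right ideals. Once this is observed, the argument reduces to the elementary correspondence between finitely generated right ideals of $A_n^*$ and their clopen images in the Cantor space $A_n^\omega$, which is all the combinatorics that is needed.
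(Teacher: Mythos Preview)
Your argument is correct. The paper itself does not prove this lemma but merely cites \cite[Lemma~9.16]{LV2020a}, so there is no in-paper proof to compare against; your route via Lemma~\ref{lem:biscuits}(4) and the lattice of projections is a natural self-contained substitute using only machinery already developed in the paper.

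One small step deserves an explicit sentence: in the $(\Leftarrow)$ direction you assert $XA_{n}^{\omega} = WA_{n}^{\omega}$, where $WA_{n}^{\ast} = XA_{n}^{\ast} \cap YA_{n}^{\ast}$. This is true but not automatic from the definition of $W$; it uses the hypothesis $XA_{n}^{\omega} = YA_{n}^{\omega}$. Concretely: any $\xi \in XA_{n}^{\omega}$ has some $x_{i}$ as a prefix and, by hypothesis, also some $y_{j}$ as a prefix; these are then comparable, and the longer of the two lies in $XA_{n}^{\ast} \cap YA_{n}^{\ast} = WA_{n}^{\ast}$, so $\xi \in WA_{n}^{\omega}$. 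With that line inserted, the $(\Leftarrow)$ argument is complete. The $(\Rightarrow)$ direction is fine as written: the length bound on $p$ forces any $w \in W$ comparable with $p$ to be a prefix of $p$, and hence of $\xi$, and $WA_{n}^{\ast} \subseteq YA_{n}^{\ast}$ since $c \leq e_{Y}$.

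Your remarks about working inside the lattice of projections are also correct and worth keeping: in a restriction monoid the natural order gives $a \leq e$ (with $e \in E$) only if $a = ea^{\ast} \in E$, and products of projections are meets, which in $M_{n}$ are exactly intersections of the corresponding right ideals.
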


Proposition~\ref{prop:coffee} below connects symbols to elements of $M_{n}$

\begin{proposition}\label{prop:coffee} In the monoid $M_{n}$, we have that 
$\left( \frac{X}{Y} \right)
\equiv
\left( \frac{U}{V} \right)$
if and only if
$F \left( \frac{X}{Y} \right)
=
F \left( \frac{U}{V} \right)$.
\end{proposition}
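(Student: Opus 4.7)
The plan is to prove both implications by passing between the natural partial order on $M_n$ (where $c \leq a$ means $c$ is a restriction of $a$ as a partial function on $A_n^\ast$) and the induced partial functions on $A_n^\omega$, using the fact that the $\omega$-level only sees domains of prefix codes up to the equivalence $XA_n^\omega = X'A_n^\omega$, which is precisely Lemma~\ref{lem:watson}.

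For the forward implication, suppose $a := \left(\frac{X}{Y}\right) \equiv b := \left(\frac{U}{V}\right)$. By Lemma~\ref{lem:biscuits}(4), there exists $c \in M_n$ with $c \leq_e a$ and $c \leq_e b$. Since $c \leq a$ in the restriction order, $c = a c^\ast$, so as partial functions on $A_n^\ast$ the map $c$ is a restriction of $a$; passing to $A_n^\omega$ gives $F(c) \subseteq F(a)$, and likewise $F(c) \subseteq F(b)$. By Lemma~\ref{lem:essential}, $c^\ast \leq_e a^\ast$, and hence $c^\ast \equiv a^\ast$. Writing the idempotents $c^\ast$ and $a^\ast$ in the form $\bigvee_k w_k w_k^{-1}$ and $\bigvee_i x_i x_i^{-1}$, Lemma~\ref{lem:watson} yields $WA_n^\omega = XA_n^\omega$, so $F(c)$ and $F(a)$ share the same clopen domain. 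Therefore $F(c)=F(a)$, and by symmetry $F(c)=F(b)$, giving $F(a)=F(b)$.

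For the reverse implication, assume $F(a) = F(b)$. Then their clopen domains coincide: $XA_n^\omega = UA_n^\omega$. The plan is to build a common refinement $\left(\frac{W}{Z}\right)$ of the two symbols that is essential below each. Construct $W$ by, for each pair $(x_i, u_j)$ with $x_i A_n^\omega \cap u_j A_n^\omega \neq \varnothing$, including the longer of $x_i$ and $u_j$; then $W$ is a prefix code and $WA_n^\omega = XA_n^\omega = UA_n^\omega$. Equivalently, $W$ is reached from both $X$ and $U$ by sequences of caret expansions, exactly as in Proposition~\ref{prop:cummings} and Theorem~\ref{them:carets-refine}. Define $Z$ by: for $w \in W$ with $w = x_i p$, put the corresponding entry of $Z$ equal to $y_i p$; since $F(a)=F(b)$, the equivalent description via $w = u_j q \mapsto v_j q$ gives the same element. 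Thus $c := \left(\frac{W}{Z}\right)$ is a well-defined element of $M_n$ whose induced function agrees with $F(a) = F(b)$.

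It remains to check that $c \leq_e a$ (and similarly $c \leq_e b$). By construction $c \leq a$, and by Lemma~\ref{lem:essential} it suffices to verify $c^\ast \leq_e a^\ast$; but this amounts to $\bigvee_k w_k w_k^{-1} \leq_e \bigvee_i x_i x_i^{-1}$, which follows from $WA_n^\omega = XA_n^\omega$ and Lemma~\ref{lem:watson} together with the structural characterization in Proposition~\ref{prop:cummings}. Hence $c \leq_e a,b$, so $a \equiv c \equiv b$. The main obstacle is the construction of $W$ and $Z$ in the reverse direction: one must carefully use the prefix-code structure and the compatibility of the labellings forced by $F(a) = F(b)$, and then transport the essentiality from the idempotent level back up to the whole element using Lemma~\ref{lem:essential}.
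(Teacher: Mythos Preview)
Your argument is correct and takes a genuinely different route from the paper's. The paper verifies the definition of $\equiv$ directly in both directions, working element by element at the level of $P_n$: for the forward implication it fixes $x_{i}w \in XA_{n}^{\omega}$, chooses a sufficiently long finite prefix of $w$, and uses the defining property of $\equiv$ to produce a common lower bound $ab^{-1}$ in $P_{n}$ forcing the two images of $x_{i}w$ to coincide; for the converse it again manufactures such lower bounds pointwise. Your approach instead passes through the structural characterisation of Lemma~\ref{lem:biscuits}(4): you find (forward) or construct (reverse) a single common essential lower bound $c$ and then compare $F(c)$ with $F(a)$ and $F(b)$ via their domains using Lemma~\ref{lem:watson}. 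This is more conceptual and avoids the repeated length bookkeeping of the paper's proof; the paper's argument, on the other hand, is entirely self-contained and does not invoke the Higman-semigroup machinery of Lemma~\ref{lem:biscuits}.

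One step should be tightened. In the final paragraph you deduce $c^{\ast} \leq_{e} a^{\ast}$ from $WA_{n}^{\omega} = XA_{n}^{\omega}$ by invoking Lemma~\ref{lem:watson} together with Proposition~\ref{prop:cummings}. But Lemma~\ref{lem:watson} only yields $c^{\ast} \equiv a^{\ast}$, and Proposition~\ref{prop:cummings} characterises $\leq_{e}$ via caret expansions, which you asserted for $W$ rather than proved; using Proposition~\ref{prop:cummings} here is therefore circular. The clean fix is a one-line observation you omitted: by construction $c^{\ast} \leq a^{\ast}$, and $c^{\ast} \leq a^{\ast}$ together with $c^{\ast} \equiv a^{\ast}$ immediately gives $c^{\ast} \leq_{e} a^{\ast}$, since for any $0 < z \leq a^{\ast}$ the definition of $\equiv$ supplies a nonzero element below both $z$ and $c^{\ast}$. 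With this in place your argument goes through.
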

\begin{proof}
Suppose first that 
$\left( \frac{X}{Y} \right)
\equiv
\left( \frac{U}{V} \right)$.
Then by Lemma~\ref{lem:sherlock}, we have that
$\left( \frac{X}{Y} \right)^{\ast}
\equiv
\left( \frac{U}{V} \right)^{\ast}$.
Thus, by Lemma~\ref{lem:watson}, we have that $XA_{n}^{\omega} = UA_{n}^{\omega}$.
It follows that the domains of definition of 
$F \left( \frac{X}{Y} \right)$
and
$F \left( \frac{U}{V} \right)$
are the same.
Let $x_{i}w \in XA_{n}^{\omega}$.
Then $F\left( \frac{X}{Y} \right) (x_{i}w) = y_{i}w$.
Choose a prefix $x$ of $w$ such that $x_{i}x$ has length greater than any element of $U$
and such that $y_{i}x$ has a greater length than any element of $V$.
We have that $y_{i}x(x_{i}x)^{-1} \leq y_{i}x_{i}^{-1}$.
By assumption, there exists $ab^{-1} \leq y_{i}x(x_{i}x)^{-1}, v_{j}u_{j}^{-1}$ for some $j$.
Thus $a = y_{i}xp = v_{j}q$ and $b = x_{i}xp = u_{j}q$.
Because of our choice of lengths, we have that
$y_{i}x = v_{j}s$ and $x_{i}x = u_{j}t$.
We can easily see that $s = t$.
It is now routine to check that
$F\left( \frac{X}{Y} \right) (x_{i}w) = F\left( \frac{U}{V} \right) (x_{i}w)$.

We now prove the converse.
Let $ab^{-1} \leq y_{i}x_{i}^{-1}$ for some $i$.
Then $a = y_{i}p$ and $b = x_{i}p$.
Let $w \in A_{n}^{\omega}$.
Choose a prefix $x$ of $w$ such that $x_{i}px$ is longer than any element in $U$
and such that $y_{i}px$ is longer than any element in $V$.
By assumption, we quickly deduce that there exist finite strings $s$ and $t$ such that
$x_{i}px = u_{j}s$ and $y_{i}px = v_{j}t$.
Let $\hat{w}$ be any right-infinite string.
Using Lemma~\ref{lem:tea} and our assumption, we deduce that $s = t$.
We then get that $y_{i}px(x_{i}px)^{-1} \leq ab^{-1}, v_{j}u_{j}^{-1}$.
The result now follows by symmetry.
\end{proof}

\begin{proposition}\label{prop:light} Let $(S,E)$ be a restriction semigroup.
Let $\rho$ be a congruence on $S$ such that $a \, \rho \, b$ implies that $a^{\ast} \, \rho \, b^{\ast}$, 
and $e \, \rho \, a$, where $e$ is a projection, implies that $a$ is a projection.
Then $(S/ \rho, E/\rho)$ is a restriction semigroup.
\end{proposition}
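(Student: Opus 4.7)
The plan is to verify the four restriction axioms (RS1)--(RS4) for the quotient $S/\rho$ equipped with projection set $E/\rho := \{[e] : e \in E\}$ and unary operation $[a]^{\ast} := [a^{\ast}]$. The proof is largely a matter of pushing structure through a congruence, so the real content lies in ensuring that everything is well-defined; once that is in place, the axioms transfer mechanically.

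First I would establish well-definedness of the two extra pieces of data. The unary operation descends to $S/\rho$ because of the first hypothesis: $a\,\rho\,b$ forces $a^{\ast}\,\rho\,b^{\ast}$, so $[a^{\ast}]$ depends only on $[a]$. For the projection set, I would argue that the second hypothesis is exactly what is needed to make $E/\rho$ an intrinsic subset of $S/\rho$: if $[a] = [e]$ for some $e \in E$, then $a\,\rho\,e$ with $e$ a projection, so $a$ itself is a projection. Hence $E/\rho$ is unambiguously the set of $\rho$-classes consisting entirely of projections, and it clearly forms a commutative subsemigroup of idempotents of $S/\rho$ since $E$ does in $S$ (using $[e][f] = [ef]$ and the commutativity of $E$).

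Next I would verify the axioms directly from their analogues in $S$, each a one-line consequence of the fact that the natural map $S \to S/\rho$ is a semigroup homomorphism intertwining the two unary operations. For (RS1), $[e]^{\ast} = [e^{\ast}] = [e]$ whenever $e \in E$. For (RS2), $[a][a]^{\ast} = [a a^{\ast}] = [a]$. For (RS3), $([a]^{\ast}[b])^{\ast} = [(a^{\ast}b)^{\ast}] = [(ab)^{\ast}] = ([a][b])^{\ast}$. For (RS4), $[a]^{\ast}[b] = [a^{\ast}b] = [b(ab)^{\ast}] = [b]([a][b])^{\ast}$. In each case the middle equality is the axiom in $S$ applied to representatives, and the outer equalities are the definitions of multiplication and $({}^{\ast})$ in the quotient.

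The main (mild) obstacle is really conceptual rather than computational: without the second hypothesis the set $E/\rho$ would not be a canonically defined subset of $S/\rho$, since a class $[a]$ might coincide with $[e]$ for some projection $e$ while containing non-projection representatives, and then declaring $[a]$ to be a projection would depend on the choice of representative. The hypothesis that projection classes contain only projections is precisely the condition that eliminates this ambiguity, and with it the proof reduces to routine transfer of identities along the quotient homomorphism.
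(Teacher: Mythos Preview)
Your proof is correct and follows essentially the same approach as the paper, which is extremely terse: it simply notes that $S/\rho$ is a semigroup, $E/\rho$ is a commutative idempotent subsemigroup, and defines $\rho(a)^{\ast} = \rho(a^{\ast})$, leaving the verification of (RS1)--(RS4) implicit. Your version spells out the well-definedness arguments and the axiom checks that the paper omits, and your explanation of why the second hypothesis is needed is a helpful clarification that the paper does not provide.
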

\begin{proof} We have that $S/\rho$ is a semigroup and $E/\rho$ is a commutative idempotent subsemigroup.
Define $\rho (a)^{\ast} = \rho (a^{\ast})$.
\end{proof}

We now apply Proposition~\ref{prop:monoid}, Lemma~\ref{lem:biscuits}, Lemma~\ref{lem:sherlock}, Proposition~\ref{prop:light}
to deduce that $M_{n}/\equiv$ is also a restriction semigroup.\\

\noindent
{\bf Definition.} Put $\mathscr{C}_{n} = M_{n}/\equiv$.\\

The monoid $\mathscr{C}_{n}$ will play an important r\^ole in what we do next.

We shall make our definitions first on the monoid $M_{n}$.
Let $\left( \frac{X_{1}}{Y_{1}} \right), \ldots, \left( \frac{X_{n}}{Y_{n}} \right)$ be $n$ symbols.
Each of $X_{1}, \ldots, X_{n}$ is a prefix code.
Thus by Lemma~\ref{lem:prefix-codes} the set $X = a_{1}X_{1} \cup \ldots \cup a_{n}X_{n}$ is a prefix code.
By $Y = Y_{1} \cup \ldots \cup Y_{n}$, we mean a union that preserves the order of the terms.
Define
$$\left( \left( \frac{X_{1}}{Y_{1}} \right), \ldots,  \left( \frac{X_{n}}{Y_{n}} \right) \right) \lambda = \left( \frac{X}{Y} \right).$$         
We have therefore defined an $n$-ary operation on the set $M_{n}$.
We now define $n$ unary operations on the set of symbols.
Choose $i$.
Let
$\left( \frac{X}{Y} \right)$
be a symbol.
There are two cases.
Suppose first that $X \neq \{ \varepsilon \}$.
Then by Lemma~\ref{lem:quotients-pc}, we have that 
$\left( \frac{a_{i}^{-1}X}{Y} \right)$
is a well-defined symbol;
in this case, define
$$\left( \frac{X}{Y} \right)\alpha_{i} = \left( \frac{a_{i}^{-1}X}{Y} \right).$$
We now deal with the remaining case.
Define
$$\left( \frac{\varepsilon}{y} \right)\alpha_{i} = \left( \frac{\varepsilon}{ya_{i}} \right).$$
The proofs of Lemma~\ref{lem:stacia} are routine.
They imply that the operations on $M_{n}$ can safely be defined on $\mathscr{C}_{n}$.

\begin{lemma}\label{lem:stacia} In the monoid $M_{n}$, we have the following.
\begin{enumerate}
\item If $\left( \frac{X}{Y} \right) \equiv \left( \frac{X'}{Y'} \right)$ then
$\left( \frac{aX}{Y} \right) \equiv \left( \frac{aX'}{Y'} \right)$
for each $a \in A_{n}$.
\item If $\left( \frac{X}{Y} \right) \equiv \left( \frac{X'}{Y'} \right)$ then
$\left( \frac{X}{Y} \right) \alpha_{i} \equiv \left( \frac{X'}{Y'} \right) \alpha_{i}$
in all cases.
\end{enumerate}
\end{lemma}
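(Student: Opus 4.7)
The plan is to invoke Proposition~\ref{prop:coffee}, which reduces $\equiv$-equivalence of two symbols to equality of the induced partial functions on $A_n^\omega$. Both parts then become statements about partial functions, and the task is to verify that the set-theoretic construction in question (prepending $a$ in part~(1), applying $\alpha_i$ in part~(2)) depends only on the induced function, not on the chosen symbol representative.

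For part~(1), let $f = F\left(\frac{X}{Y}\right) = F\left(\frac{X'}{Y'}\right)$. Then $\mathrm{dom}(f) = XA_n^\omega = X'A_n^\omega$, so both $F\left(\frac{aX}{Y}\right)$ and $F\left(\frac{aX'}{Y'}\right)$ are partial functions with common domain $a \cdot \mathrm{dom}(f)$, and a direct unpacking of the definitions gives $F\left(\frac{aX}{Y}\right)(aw) = f(w) = F\left(\frac{aX'}{Y'}\right)(aw)$ for every $w \in \mathrm{dom}(f)$. Hence the two functions agree and Proposition~\ref{prop:coffee} yields $\left(\frac{aX}{Y}\right) \equiv \left(\frac{aX'}{Y'}\right)$.

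For part~(2), the plan is to prove a uniform description: regardless of which clause of the definition of $\alpha_i$ applies, the symbol $\left(\frac{X}{Y}\right)\alpha_i$ induces the partial function $v \mapsto f(a_i v)$ on $\{v \in A_n^\omega : a_i v \in \mathrm{dom}(f)\}$, where $f = F\left(\frac{X}{Y}\right)$. In the clause $X = \{\varepsilon\}$ this is immediate, since $f$ is left-multiplication by $y$ and $\left(\frac{\varepsilon}{ya_i}\right)$ is left-multiplication by $ya_i$. In the clause $X \neq \{\varepsilon\}$, after the natural restriction of $Y$ to the indices $j$ with $x_j \in a_i A_n^{*}$, the fact that $X$ consists of non-empty strings of a prefix code forces $(a_i^{-1}X)A_n^\omega = \{v : a_i v \in XA_n^\omega\}$, and the value at $v = (a_i^{-1}x_j)w$ unpacks to $y_j w = f(a_i v)$. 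Given the uniform description, $\alpha_i$ descends to a function of $f$ alone, and Proposition~\ref{prop:coffee} finishes the proof.

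The anticipated obstacle is the cross-case of part~(2) in which $X$ and $X'$ are $\equiv$-equivalent but one equals $\{\varepsilon\}$ while the other does not, so different clauses of the $\alpha_i$-definition fire on the two sides. The uniform description absorbs precisely this subtlety: despite their different syntactic forms, both clauses compute the same partial function $v \mapsto f(a_i v)$, so the two $\alpha_i$-images are forced to be $\equiv$-equivalent.
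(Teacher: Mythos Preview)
Your argument is correct. The paper itself omits the proof entirely, declaring it ``routine'', so there is no detailed approach in the paper to compare against. Your route via Proposition~\ref{prop:coffee} is clean and handles the cross-case in part~(2) neatly through the uniform description $v \mapsto f(a_{i}v)$.

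It is worth noting a slightly different route that the surrounding material suggests and that may be what the author had in mind as ``routine''. Lemma~\ref{lem:results-barnard} (which appears just after Lemma~\ref{lem:stacia}, but whose content is an independent elementary computation) shows that
\[
\left(\tfrac{X}{Y}\right)\left(\tfrac{a}{\varepsilon}\right) = \left(\tfrac{aX}{Y}\right)
\quad\text{and}\quad
\left(\tfrac{X}{Y}\right)\left(\tfrac{\varepsilon}{a_{i}}\right) = \left(\tfrac{X}{Y}\right)\alpha_{i}
\]
(the latter uniformly in both clauses of the definition of $\alpha_{i}$). Thus both constructions in the lemma are simply right multiplication in $M_{n}$ by a fixed element, and the result is then immediate from the fact that $\equiv$ is a congruence (Lemma~\ref{lem:biscuits}(1)). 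This alternative is marginally shorter and avoids passing through $A_{n}^{\omega}$, but it requires anticipating the content of Lemma~\ref{lem:results-barnard}. Your approach has the advantage of using only results that precede the statement in the paper's logical order.
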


\subsection{$n$-ary Cantor algebras}

We shall now prove that 
$$\left(\mathscr{C}_{n}, \alpha_{1}, \ldots, \alpha_{n}, \lambda \right)$$
 is an $n$-ary Cantor algebra.
We now need to regard $A_{n}^{\omega}$ as the Cantor space.
For each $a \in A_{n}$ define $\rho_{a}$ to be the partial homeomorphism of 
 $A_{n}^{\omega}$
given by $w \mapsto wa$.
Thus $\rho_{a} \colon A_{n}^{\omega} \rightarrow aA_{n}^{\omega}$ is a homeomorphism.
We now define an $n$-ary map $\lambda$ on $\mathcal{R}^{\scriptstyle cl}(X)$ 
and unary maps $\alpha_{1}, \ldots, \alpha_{n}$.
Define the $n$-ary map $\lambda$ by 
$(f_{1}, \ldots, f_{n}) \lambda = f_{1} \rho_{a_{1}}^{-1} \cup \ldots \cup f_{n} \rho_{a_{n}}^{-1}$
and define the unary map $(f) \alpha_{i} = f \rho_{a_{i}}$.
Observe that $\rho_{a}^{-1}\rho_{a}$ is the identity function on $A_{n}^{\omega}$ 
and that $\rho_{a_{1}} \rho_{a_{1}}^{-1} \cup \ldots \cup \rho_{a_{n}} \rho_{a_{a}}^{-1}$ is the identity function on $A_{n}^{\omega}$.
The proof of Lemma~\ref{lem:gregson} below is now straightforward.

\begin{lemma}\label{lem:gregson} With the above definitions,
$\mathcal{R}^{\scriptstyle cl}(A_{n}^{\omega})$ 
is an $n$-ary Cantor algebra.
\end{lemma}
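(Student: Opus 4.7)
The plan is to verify the two axioms (CA1) and (CA2) of an $n$-ary Cantor algebra by direct calculation, relying only on the two compositional identities stated immediately before the lemma, namely $\rho_a^{-1}\rho_a = \mathrm{id}_{A_n^\omega}$ and $\bigcup_{i=1}^n \rho_{a_i}\rho_{a_i}^{-1} = \mathrm{id}_{A_n^\omega}$. Before doing so I would check that $\alpha_i$ and $\lambda$ land back in $\mathcal{R}^{\scriptstyle cl}(A_n^\omega)$: this amounts to noting that $\rho_{a_i}$ and $\rho_{a_i}^{-1}$ are homeomorphisms between clopen sets, that preimages and images of clopens under local homeomorphisms are clopen (using the compactness argument recalled in Example~\ref{ex:ex2}), and that the partial maps $f_j\rho_{a_j}^{-1}$ appearing in the union defining $\lambda$ have pairwise disjoint domains (they sit over the disjoint cylinders $a_j A_n^\omega$) so that their union is an unambiguous partial map. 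Throughout, composition of partial maps is to be read right-to-left, consistent with the paper's convention that $xy^{-1}$ denotes the partial map $yu\mapsto xu$.

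For (CA1), I would compute, for any $f \in \mathcal{R}^{\scriptstyle cl}(A_n^\omega)$,
\[
(f\alpha_1,\ldots,f\alpha_n)\lambda
\;=\;\bigcup_{i=1}^n (f\rho_{a_i})\rho_{a_i}^{-1}
\;=\; f \circ \Bigl(\bigcup_{i=1}^n \rho_{a_i}\rho_{a_i}^{-1}\Bigr)
\;=\; f,
\]
using in the middle step that right-composition with a partial map distributes over finite unions of partial maps, and in the last step the second of the displayed identities.

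For (CA2), I would compute
\[
\bigl((f_1,\ldots,f_n)\lambda\bigr)\alpha_i
\;=\;\Bigl(\bigcup_{j=1}^n f_j\rho_{a_j}^{-1}\Bigr)\rho_{a_i}
\;=\;\bigcup_{j=1}^n f_j\rho_{a_j}^{-1}\rho_{a_i},
\]
and then handle the summands individually. When $j=i$, the factor $\rho_{a_i}^{-1}\rho_{a_i}$ equals $\mathrm{id}_{A_n^\omega}$ by the first identity, so the summand is $f_i$. When $j\neq i$, the composite $\rho_{a_j}^{-1}\rho_{a_i}$ is the empty partial map, since the image of $\rho_{a_i}$ lies in $a_i A_n^\omega$ while $\rho_{a_j}^{-1}$ is defined on $a_j A_n^\omega$, and these clopen cylinders are disjoint for $i\neq j$. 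Hence the union collapses to the single summand $f_i$. There is no substantive obstacle; the only point of care is this disjointness argument, and it is immediate from the clopen partition $A_n^\omega = \bigsqcup_{a\in A_n} aA_n^\omega$.
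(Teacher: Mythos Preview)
Your proposal is correct and is exactly the argument the paper has in mind: the paper does not spell out a proof but simply notes the two identities $\rho_a^{-1}\rho_a = \mathrm{id}_{A_n^\omega}$ and $\bigcup_{i} \rho_{a_i}\rho_{a_i}^{-1} = \mathrm{id}_{A_n^\omega}$ and declares the lemma ``straightforward'', and your verification of (CA1) and (CA2) from these identities (together with the disjointness of the cylinders $a_iA_n^\omega$ for (CA2)) is precisely what that remark is pointing to. Your additional well-definedness check that $\alpha_i$ and $\lambda$ stay in $\mathcal{R}^{\scriptstyle cl}(A_n^\omega)$ is a welcome bit of care the paper leaves implicit.
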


It is not immediately obvious that the operations we have defined above agree with the operations we have defined on $\mathscr{C}_{n}$.
Lemma~\ref{lem:results-barnard}, whose proof is routine, shows that these are the same operations.

\begin{lemma}\label{lem:results-barnard}
Let $a \in A_{n}$ be a letter.
\begin{enumerate}
\item 
If $X \neq \varepsilon$ then $\left( \frac{X}{Y} \right) \left( \frac{\varepsilon}{a} \right) = \left( \frac{a^{-1}X}{Y} \right)$
whereas 
if $X = \varepsilon$ then $\left( \frac{\varepsilon }{y} \right) \left( \frac{\varepsilon}{a} \right)  = \left( \frac{\varepsilon}{ya} \right)$.
\item $\left( \frac{X}{Y} \right) \left( \frac{a}{\varepsilon} \right) = \left( \frac{aX}{Y} \right)$
\end{enumerate}
\end{lemma}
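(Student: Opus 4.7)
The plan is to compute both products directly by expressing each symbol as a join of polycyclic elements and then applying the standard multiplication rule in $P_n$ extended to $M_n$. Specifically, by Proposition~\ref{prop:monoid} we have $\left(\frac{X}{Y}\right) = \bigvee_i y_i x_i^{-1}$, while $\left(\frac{\varepsilon}{a}\right) = a\varepsilon^{-1}$ and $\left(\frac{a}{\varepsilon}\right) = \varepsilon a^{-1}$. Recall that in $P_n$ one has $xy^{-1} \cdot uv^{-1} = x(vp)^{-1}$ when $y = up$, and $xy^{-1} \cdot uv^{-1} = (xp)v^{-1}$ when $u = yp$, and $0$ otherwise.

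For part (2), compute $y_i x_i^{-1} \cdot \varepsilon a^{-1}$ for each $i$. Since $\varepsilon$ is a prefix of $x_i$ (with $x_i = \varepsilon \cdot x_i$), the multiplication rule gives $y_i x_i^{-1} \cdot \varepsilon a^{-1} = y_i(ax_i)^{-1}$. Taking the join over $i$ yields $\bigvee_i y_i(ax_i)^{-1}$, which is the symbol $\left(\frac{aX}{Y}\right)$; the set $aX$ is itself a prefix code by Lemma~\ref{lem:prefix-codes}, and the ordering of the labels is preserved.

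For part (1), split into two sub-cases. When $X = \{\varepsilon\}$, a one-line polycyclic computation gives $y\varepsilon^{-1} \cdot a\varepsilon^{-1} = ya\,\varepsilon^{-1}$ (the case $y = \varepsilon$, $u = a$ of the multiplication rule, so $p = a$), which is exactly $\left(\frac{\varepsilon}{ya}\right)$. When $X \neq \{\varepsilon\}$, every $x_i$ is non-empty (since $\varepsilon$ cannot lie in a prefix code with more than one element), so the product $y_i x_i^{-1} \cdot a\varepsilon^{-1}$ is non-zero precisely when $a$ is a prefix of $x_i$, i.e.\ $x_i = ap_i$ for a (possibly empty) $p_i$. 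In that case the rule yields $y_i p_i^{-1}$. Joining over exactly those indices $i$ with $x_i \in aA_n^{\ast}$, we recover the symbol $\left(\frac{a^{-1}X}{Y}\right)$, where $a^{-1}X = \{p_i : x_i = ap_i\}$ is a prefix code by Lemma~\ref{lem:quotients-pc} and the $Y$ on the right denotes the corresponding restriction of the original label sequence.

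The proof is essentially a direct verification once the decomposition into polycyclic generators is in hand; the only point that requires any care is treating the degenerate case $X = \{\varepsilon\}$ of part~(1) separately, since then $a^{-1}X$ would be empty and the uniform formula fails, while the polycyclic product lands in the other branch of the multiplication rule, producing the concatenation $ya$ on the numerator side.
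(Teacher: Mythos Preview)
Your proof is correct and is precisely the routine verification the paper has in mind: decompose each symbol as a left-orthogonal join of polycyclic generators via Proposition~\ref{prop:monoid}, apply the multiplication rule in $P_{n}$ term by term, and reassemble. The paper gives no details beyond calling it routine, so there is nothing further to compare.
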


We now summarize what we have proved so far.

\begin{theorem}\label{them:higman-one}
We define the following operations in $\mathscr{C}_{n}$.
Define the $n$-ary operation $\lambda$ by
$$(f_{1}, \ldots, f_{n}) \lambda = f_{1} \rho_{a_{1}}^{-1} \cup \ldots \cup f_{n} \rho_{a_{n}}^{-1}$$
and the $n$ unary operations $\alpha_{1}, \ldots, \alpha_{n}$ by
 $$(f) \alpha_{i} = f \rho_{a_{i}}.$$
 Then
 $$\left(\mathscr{C}_{n}, \alpha_{1}, \ldots, \alpha_{n}, \lambda \right)$$
is an $n$-ary Cantor algebra.
 \end{theorem}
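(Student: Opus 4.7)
The strategy is to transfer the Cantor algebra structure from $\mathcal{R}^{\scriptstyle cl}(A_{n}^{\omega})$, where it has already been verified in Lemma~\ref{lem:gregson}, to $\mathscr{C}_{n}$ by means of the map sending a $\equiv$-class to its induced surjective local homeomorphism on $A_{n}^{\omega}$. Write $\Phi \colon \mathscr{C}_{n} \to \mathcal{R}^{\scriptstyle cl}(A_{n}^{\omega})$ for the function that sends the $\equiv$-class $[\left(\frac{X}{Y}\right)]$ to $F\left(\frac{X}{Y}\right)$. By Proposition~\ref{prop:coffee}, $\Phi$ is well-defined and injective. By Lemma~\ref{lem:stacia} the operations $\alpha_{1},\ldots,\alpha_{n},\lambda$ already constructed at the level of symbols descend unambiguously to $\mathscr{C}_{n}$.

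The heart of the proof will be to show that $\Phi$ is a homomorphism for the unary and $n$-ary operations. For each letter $a_{i}$, observe that multiplication by the symbol $\left(\frac{\varepsilon}{a_{i}}\right)$ inside the monoid $M_{n}$ induces, on $A_{n}^{\omega}$, precisely the map $\rho_{a_{i}} \colon w \mapsto a_{i}w$. Combined with Lemma~\ref{lem:results-barnard}(1), which rewrites $\left(\frac{X}{Y}\right)\alpha_{i}$ as $\left(\frac{X}{Y}\right)\left(\frac{\varepsilon}{a_{i}}\right)$ (taking care of the boundary case $X=\{\varepsilon\}$ separately), this gives
\[
\Phi([x]\alpha_{i}) \;=\; \Phi([x])\,\rho_{a_{i}}.
\]
For the $n$-ary operation, unwinding the definition shows that the symbol $\left(\left(\frac{X_{1}}{Y_{1}}\right),\ldots,\left(\frac{X_{n}}{Y_{n}}\right)\right)\lambda = \left(\frac{\bigcup a_{i}X_{i}}{\bigcup Y_{i}}\right)$ induces the function whose domain is $\bigcup_{i}a_{i}X_{i}A_{n}^{\omega}$ and which on the branch beginning with $a_{i}$ agrees with the $i$-th input composed with stripping off $a_{i}$; this is exactly $f_{1}\rho_{a_{1}}^{-1}\cup\cdots\cup f_{n}\rho_{a_{n}}^{-1}$, so $\Phi$ intertwines $\lambda$ as well.

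With the intertwining established, the two axioms (CA1) and (CA2) transport automatically: for any $[x] \in \mathscr{C}_{n}$ we have
\[
\Phi\bigl(([x]\alpha_{1},\ldots,[x]\alpha_{n})\lambda\bigr) \;=\; (\Phi([x])\alpha_{1},\ldots,\Phi([x])\alpha_{n})\lambda \;=\; \Phi([x])
\]
by Lemma~\ref{lem:gregson}, and injectivity of $\Phi$ (Proposition~\ref{prop:coffee}) forces (CA1) to hold in $\mathscr{C}_{n}$. Axiom (CA2) is handled identically.

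The main obstacle is the boundary behaviour of $\alpha_{i}$ on the trivial prefix code $X=\{\varepsilon\}$: here the symbol-level identity $(x\alpha_{1},\ldots,x\alpha_{n})\lambda = x$ fails in $M_{n}$ itself, since for $x = \left(\frac{\varepsilon}{y}\right)$ the right-hand side evaluates to $\left(\frac{\{a_{1},\ldots,a_{n}\}}{\{ya_{1},\ldots,ya_{n}\}}\right)$. It is exactly the passage to $\mathscr{C}_{n} = M_{n}/\!\equiv$ that identifies these two symbols (by Lemma~\ref{lem:watson} and Proposition~\ref{prop:coffee}), so the proof rests on the fact that one may only verify (CA1) after, not before, quotienting. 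Making sure the bookkeeping of $Y$ under $\alpha_{i}$ (keeping only the entries whose $x$-partner begins with $a_{i}$) is compatible with both the monoid computation and the topological computation is the one place where some care is required; everything else is direct verification.
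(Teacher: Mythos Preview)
Your proof is correct and follows essentially the same route as the paper: the paper's ``proof'' of Theorem~\ref{them:higman-one} is simply the line ``We now summarize what we have proved so far,'' relying on Lemma~\ref{lem:gregson}, Lemma~\ref{lem:stacia}, Lemma~\ref{lem:results-barnard}, and Proposition~\ref{prop:coffee} exactly as you do. You have made the implicit transfer argument explicit by naming the injective map $\Phi$ and spelling out why the axioms pull back along it; this is a genuine improvement in clarity, but not a different argument.
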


The fact that $\mathscr{C}_{n}$ can be regarded as an $n$-ary Cantor algebra is due to Statman \cite{Statman} and is the monoid he refers to as POPS.

In the light of Lemma~\ref{lem:essential}, Proposition~\ref{prop:annie}, and Proposition~\ref{prop:cummings},
we may reprove Theorem~\ref{them:carets-more} and Theorem~\ref{them:carets-refine} as Theorem~\ref{them:carets-refine-new}.

\begin{theorem}\label{them:carets-refine-new}
Let $f = \bigvee_{i} y_{i}x_{i}^{-1}$ and $g = \bigvee_{j} v_{j}u_{j}^{-1}$ in the monoid $M_{n}$.
Then $f \, \equiv \, g$ if and only if $g$ is obtained from $f$ by a finite sequence of inserting and deleting carets.
\end{theorem}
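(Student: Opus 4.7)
The plan is to reduce to the projection case, where Proposition~\ref{prop:cummings} already controls $\leq_e$ in terms of caret expansions, and then to transfer that combinatorial information back to $f$ and $g$ themselves via the restriction-semigroup identity $a = b \, a^{\ast}$ which holds whenever $a \leq b$. The definition of ``inserting a caret'' given before Theorem~\ref{them:carets-refine} is exactly what this identity produces when the domain prefix code of $f$ is caret-refined, so the bridge between the algebraic relation and the symbol-level combinatorics is built into the notation.

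For the backward direction, I would first check that a single caret insertion $f \leadsto g$ satisfies $g \leq_e f$. This is immediate on the projection side: the prefix code underlying $g^{\ast}$ is a one-caret expansion of the prefix code underlying $f^{\ast}$, so Proposition~\ref{prop:cummings} gives $g^{\ast} \leq_e f^{\ast}$, and Lemma~\ref{lem:essential} lifts this to $g \leq_e f$. Since $a \leq_e b$ implies $a \equiv b$, and $\equiv$ is transitive, chaining together insertions and deletions yields $f \equiv g$.

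For the forward direction, suppose $f \equiv g$. Because $M_n$ is a Higman monoid by Proposition~\ref{prop:monoid}, Lemma~\ref{lem:biscuits}(4) produces $h \in M_n$ with $h \leq_e f$ and $h \leq_e g$. Applying Lemma~\ref{lem:essential} in both cases gives $h^{\ast} \leq_e f^{\ast}$ and $h^{\ast} \leq_e g^{\ast}$ in the semilattice of projections, which is just the semilattice of idempotents of $D_n$. Proposition~\ref{prop:cummings} then exhibits the prefix code of $h^{\ast}$ as a caret expansion both of the prefix code of $f^{\ast}$ and of that of $g^{\ast}$. Since $h \leq f$ forces $h = f h^{\ast}$, performing those caret expansions on the symbol of $f$ produces exactly $h$; by symmetry, performing the second sequence on $g$ also produces $h$. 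Reversing this second sequence as a sequence of caret deletions links $h$ to $g$, so $g$ is obtained from $f$ by a finite sequence of insertions followed by deletions.

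The main obstacle is the bookkeeping that translates the algebraic equality $h = f h^{\ast}$ into the formal symbol-level operation from the definition: one must verify that replacing a single leaf $x_i$ of the prefix code by $x_i a_1, \ldots, x_i a_n$ is the same as replacing $y_i x_i^{-1}$ by $\bigvee_{j=1}^{n} y_i a_j (x_i a_j)^{-1}$, and that iterating this exhausts all caret expansions. Once this correspondence is pinned down, the rest is essentially a transcription of the argument of Theorem~\ref{them:carets-more} from the inverse setting of $D_n$ to the restriction setting of $M_n$, with Proposition~\ref{prop:annie} serving as a sanity check that $\equiv$ on $M_n$ really is the congruence being characterized combinatorially.
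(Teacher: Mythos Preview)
Your proposal is correct and follows essentially the same route as the paper: reduce to projections via Lemma~\ref{lem:essential}, invoke Proposition~\ref{prop:cummings} to obtain caret expansions, and use Lemma~\ref{lem:biscuits}(4) to produce the common refinement $h$, exactly mirroring the proof of Theorem~\ref{them:carets-more} in the $D_{n}$ setting. The paper also lists Proposition~\ref{prop:annie} among the ingredients, which offers a slightly different packaging --- one can verify that the caret-insertion congruence satisfies the three hypotheses there and conclude it equals $\equiv$ by uniqueness --- but your direct argument via $h = f\,h^{\ast}$ is equivalent and equally valid.
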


\subsection{The $1$-generated case}

We proved in the previous section that
$$\left(\mathscr{C}_{n}, \alpha_{1}, \ldots, \alpha_{n}, \lambda \right)$$
 is an $n$-ary Cantor algebra.
This monoid has a zero which is something we so not want.
So, in this section, we shall restrict our attention to a submonoid of $\mathscr{C}_{n}$: namely, the total maps.
This is still an $n$-ary Cantor algebra and we denote it by  
$$\left(\mathscr{T}_{n}, \alpha_{1}, \ldots, \alpha_{n}, \lambda \right).$$
The fact that it is a monoid can be easily checked from the definition of a restriction monoid
since $f \in \mathscr{T}_{n}$ precisely when $f^{\ast}$ is equal to the identity map on $A_{n}^{\omega}$.
This is the monoid that Statman calls TOPS \cite{Statman}.
Observe that a symbol $\left( \frac{X}{Y}\right)$ represents an element of $\mathscr{T}_{n}$ precisely when $X$ is now a {\em maximal} prefix code
by Proposition~\ref{prop:mpc}.
In particular, the monoid  $\mathscr{T}_{n}$ does not contain zero.
A string $x$ in $A_{n}^{\ast}$ will represent the element $x\varepsilon^{-1}$ of the monoid $\mathscr{T}_{n}$.
We prove first that every element in $\mathscr{T}_{n}$ can be written using the $n$-ary operation $\lambda$ and elements of the free monoid.

\begin{lemma}\label{lem:normal-forms} 
Every element in $\mathscr{T}_{n}$ can be written as an allowable $\lambda$-expression over the free monoid.
\end{lemma}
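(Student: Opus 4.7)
The plan is to induct on the cardinality of the underlying maximal prefix code. Recall that an element of $\mathscr{T}_n$ is represented by a symbol $\left(\frac{X}{Y}\right)$ where, by Proposition~\ref{prop:mpc}, $X$ is a \emph{maximal} prefix code and $Y$ is an ordered tuple of strings in $A_n^\ast$ of the same cardinality. A string $x$ sits in $\mathscr{T}_n$ as the symbol $\left(\frac{\varepsilon}{x}\right)$, which is an allowable $\lambda$-expression by the base clause of the definition. We induct on $|X|$.

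For the base case, $|X|=1$: since $X$ is a maximal prefix code of cardinality one, we must have $X=\{\varepsilon\}$. Then $Y=\{y\}$ for some string $y$, so the element is $y$ itself and we are done. For the inductive step, assume $|X|\geq 2$, so in particular $X\neq\{\varepsilon\}$. I would first verify that, for each $i$, the set $X_i := a_i^{-1}X$ is itself a maximal prefix code over $A_n$: it is a prefix code by Lemma~\ref{lem:quotients-pc}, it is nonempty because if $X$ contained no string beginning with some letter $a_j$ then $a_j$ would be prefix-incomparable with every element of $X$ (using $X\neq\{\varepsilon\}$, so $\varepsilon\notin X$), contradicting maximality; and it is maximal because any $z\in A_n^\ast$ is prefix-comparable with $X_i$: indeed $a_iz$ is prefix-comparable with some $x'\in X$, and since $X\neq\{\varepsilon\}$ we must have $x' = a_i x$ for some $x\in X_i$ that is then prefix-comparable with $z$.

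Now write $X = a_1 X_1 \cup \cdots \cup a_n X_n$ as a disjoint union, which forces a corresponding decomposition $Y = Y_1 \cup \cdots \cup Y_n$ of the ordered tuple (where $Y_i$ consists of those entries in $Y$ paired with elements of $a_iX_i$). By the defining formula for the $n$-ary operation $\lambda$ on symbols given earlier, we have
$$\left(\frac{X}{Y}\right) = \left(\left(\frac{X_1}{Y_1}\right),\ldots,\left(\frac{X_n}{Y_n}\right)\right)\lambda.$$
Since $n\geq 2$ and each $X_i$ is nonempty, $|X_i| < |X|$ for each $i$, so the inductive hypothesis applies to each $\left(\frac{X_i}{Y_i}\right)$, expressing it as an allowable $\lambda$-expression over $A_n^\ast$. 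Combining via $\lambda$ yields an allowable $\lambda$-expression for $\left(\frac{X}{Y}\right)$, completing the induction. The passage through the quotient $\mathscr{C}_n = M_n/\!\equiv$ causes no difficulty thanks to Lemma~\ref{lem:stacia}.

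The only delicate point, and thus the main obstacle, is checking that each $X_i$ is a maximal prefix code; in particular, ruling out the possibility that some $X_i$ is empty requires using both the maximality of $X$ and the hypothesis $X\neq\{\varepsilon\}$, as indicated above. Everything else is either a direct appeal to earlier lemmas or a mechanical verification from the definitions of the operations $\lambda$ and $\alpha_i$.
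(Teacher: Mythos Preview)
Your proof is correct but proceeds by a different induction than the paper's. The paper argues \emph{bottom-up}: given a non-trivial maximal prefix code $X$, it invokes Theorem~\ref{them:max-pc} to locate a string $x$ with $xA_{n}\subseteq X$, collapses that caret to replace $xa_{1},\ldots,xa_{n}$ by the single element $x$, and records the corresponding leaf labels as the $\lambda$-expression $(y_{1},\ldots,y_{n})\lambda$ sitting at $x$; iterating reduces $|X|$ by $n-1$ at each step until one reaches $\{\varepsilon\}$. You instead argue \emph{top-down}: split $X$ at the root as $a_{1}X_{1}\cup\cdots\cup a_{n}X_{n}$ with $X_{i}=a_{i}^{-1}X$, check each $X_{i}$ is again a maximal prefix code of strictly smaller size, and apply the definition of $\lambda$ directly. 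Your route is arguably cleaner, since it matches the recursive clause in the definition of allowable $\lambda$-expressions on the nose and avoids the appeal to Theorem~\ref{them:max-pc}; the paper's route, on the other hand, keeps the caret-expansion viewpoint front and centre, which is the unifying theme of the surrounding sections. The verification that each $X_{i}$ is a nonempty maximal prefix code (your ``only delicate point'') is indeed the only thing not already recorded as a lemma, and your argument for it is sound.
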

\begin{proof} Let $f$ be any non-identity element of $\mathscr{T}_{n}$.
Then $f$ can be represented by a symbol $\left( \frac{X}{Y} \right)$ where $X$ is a maximal prefix code.
The proof will be by induction on the number of elements of $X$.
If $X$ contains just one element it is the trivial maximal prefix code.
Thus $f = y\varepsilon^{-1}$.
We can therefore represent $f$ by $y$, an element of the free monoid $A_{n}^{\ast}$.
In what follows, we therefore assume that $X$ contains more than one element.
From the structure of maximal prefix codes Theorem~\ref{them:max-pc},
there exists $x \in X$ such that $xA_{n} \subseteq X$.
Put $X' = X \setminus xA_{n} \cup \{x\}$ which is a maximal prefix code of cardinality strictly smaller than that of $X$.
We may write $X = (X' \setminus \{x\}) \cup xA_{n}$.
Thus, relabelling if necessary, $f = (y_{1}, \ldots, y_{n})\lambda x^{-1} \cup f'$.
It follows that we have written $f = \bigvee_{x \in X'} f_{x}x^{-1}$ where $X'$ is a strictly smaller maximal prefix code and the $f_{x}$
are either elements of the free monoid or constructed using $\lambda$.
\end{proof}

The above lemma is illustrated by the following lemma.

\begin{example}{\em We work over the alphabet $A_{2} = \{a,b\}$.
Consider the element $f = a^{2}a^{-1} \vee a^{2} (ba)^{-1} \vee abb^{-2}$.
This is based on the maximal prefix code $\{a,ba,bb\}$.
This is equal to $a^{2}a^{-1} \vee (a^{2}, ab)\lambda b^{-1}$
which in turn is equal to $(a^{2}, (a^{2}, ab)\lambda)\lambda$.}
\end{example}

\begin{theorem}\label{them:free} The $n$-ary Cantor algebra $\left(\mathscr{T}_{n}, \alpha_{1}, \ldots, \alpha_{n}, \lambda \right)$
is the free $n$-ary Cantor algebra on one generator.
\end{theorem}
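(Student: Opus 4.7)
The plan is to show that the identity element $\varepsilon$ of $\mathscr{T}_n$ (represented by the symbol $\left(\frac{\varepsilon}{\varepsilon}\right)$) is a free generator: given any $n$-ary Cantor algebra $(X,\alpha_1,\ldots,\alpha_n,\lambda)$ and any element $x \in X$, there exists a unique homomorphism of $n$-ary Cantor algebras $\phi\colon \mathscr{T}_n \to X$ with $\phi(\varepsilon) = x$.

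First I would check that $\mathscr{T}_n$ is generated as an $n$-ary Cantor algebra by $\varepsilon$. Iterating Lemma~\ref{lem:results-barnard} (the $X = \varepsilon$ clause), every finite string $y = a_{i_1}\cdots a_{i_k}$ in $A_n^{\ast}$ arises as $\varepsilon\alpha_{i_1}\cdots\alpha_{i_k}$; regarded as the symbol $\left(\frac{\varepsilon}{y}\right)$, this is precisely the element of $\mathscr{T}_n$ corresponding to $y$. Combining this with Lemma~\ref{lem:normal-forms}, which writes every element of $\mathscr{T}_n$ as an allowable $\lambda$-expression over $A_n^{\ast}$, I deduce that every element of $\mathscr{T}_n$ is an allowable $\lambda$-expression over the singleton $\{\varepsilon\}$ built using only the operations $\alpha_1,\ldots,\alpha_n,\lambda$.

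For existence, given $x \in X$, define $\phi$ on an allowable $\lambda$-expression over $\{\varepsilon\}$ representing $f \in \mathscr{T}_n$ by substituting $x$ for $\varepsilon$ and interpreting $\alpha_i$ and $\lambda$ in $X$. The critical step is well-definedness: if two allowable $\lambda$-expressions over $\{\varepsilon\}$ represent the same element of $\mathscr{T}_n$, then Proposition~\ref{prop:lincoln}, applied inside the $n$-ary Cantor algebra $\mathscr{T}_n$ itself (established as the restriction of the structure from Theorem~\ref{them:higman-one} to the total submonoid, where no zero intrudes), guarantees that one can be transformed into the other using only law (CA1). Because (CA1) holds in $X$ as well, the corresponding interpretations in $X$ agree, so $\phi$ is well-defined. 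By construction $\phi$ commutes with each $\alpha_i$ and with $\lambda$, and sends $\varepsilon$ to $x$.

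Uniqueness is immediate: any homomorphism $\psi\colon \mathscr{T}_n \to X$ with $\psi(\varepsilon) = x$ is forced, by the homomorphism property, to agree with $\phi$ on every allowable $\lambda$-expression over $\{\varepsilon\}$, and these exhaust $\mathscr{T}_n$ by the first step. The main obstacle is the well-definedness of $\phi$; its substance is already packaged into Proposition~\ref{prop:lincoln}, whose proof in turn reduces to uniform maximal prefix codes of common height via Lemma~\ref{lem:trump}. Everything else is unpacking definitions.
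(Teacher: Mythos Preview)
Your proposal is correct and follows essentially the same strategy as the paper: show that the identity generates $\mathscr{T}_n$ via Lemma~\ref{lem:normal-forms}, then define $\phi$ on allowable $\lambda$-expressions and use Proposition~\ref{prop:lincoln} to verify well-definedness. The only minor difference is that the paper also invokes Theorem~\ref{them:carets-refine} (the caret characterization of $\equiv$) and phrases the well-definedness check from the target side, whereas you apply Proposition~\ref{prop:lincoln} directly inside $\mathscr{T}_n$; your route is slightly cleaner but the content is the same.
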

\begin{proof} We prove first that $\left(\mathscr{T}_{n}, \alpha_{1}, \ldots, \alpha_{n}, \lambda \right)$
is generated by one element; namely, the identity.
We shall use Lemma~\ref{lem:normal-forms}.
Observe that if $y$ is any string then it is obtained from $1$ by applying the correct unary operations.
Thus each element of $\mathscr{T}_{n}$ can be written as a term involving $\lambda$s and elements of the free monoid $A_{n}^{\ast}$.
But each element of the free monoid is simply equal to a sequence of $\alpha$s applied to the monoid identity $1$.

We now prove that it is the free such algebra.
Let $(C,\beta_{1}, \ldots, \beta_{n}, \gamma)$ be any non-empty $n$-ary Cantor algebra
generated by the element $x$.
Define the map that takes $1$ in $\mathscr{T}_{n}$ to the $x$ in $C$.
We map $a_{i}$ to the element $x\alpha_{i}$. 
We use Theorem~\ref{them:carets-refine}.
Since we are working in an $n$-ary Cantor algebra, we have the identity
$$(ua_{1}, \ldots, ua_{n})\lambda = u$$
for any string $u$.
Inserting and deleting such expressions corresponds to inserting and deleting carets.

Let $t$ be any element of  $(C,\beta_{1}, \ldots, \beta_{n}, \gamma)$.
By using the axioms, it can be written as an allowable $\lambda$-expression over $x$.
This allowable form is the image of some symbol.
On the other hand, $t$ could be written as an allowable $\lambda$-expression over $x$ in some other way.
By Proposition~\ref{prop:lincoln}, these two expressions can be converted one into the other only by the insertion or deletion of carets.
It follows that the two elements of $M_{n}$ are $\equiv$-related.
Thus there is a well-defined map from $\mathscr{T}_{n}$ to $C$.
This map is also onto by Lemma~\ref{lem:stonewall}.
From the way this map is defined, it is clear that it is a homomorphism of $n$-ary Cantor algebras.
Finally, it is clearly the unique such homomorphism mapping the identity to $x$.
\end{proof}

\begin{theorem}\label{them:higman-two}
The automorphism group of the $n$-ary Cantor algebra $\mathscr{T}_{n}$ is the Thompson group $G_{n_1}$.
\end{theorem}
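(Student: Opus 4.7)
The plan is to combine the freeness of $\mathscr{T}_{n}$ (Theorem~\ref{them:free}) with the identification of $\mathsf{U}(C_{n})$ with $G_{n,1}$ (Theorem~\ref{them:one}). First I would show that every algebra endomorphism of $\mathscr{T}_{n}$ is left multiplication by a monoid element. Since $\mathscr{T}_{n}$ is free on the generator $1$, each endomorphism $\phi$ is determined by $u := \phi(1)$, and every $u$ gives rise to a unique endomorphism $\phi_{u}$. I claim $\phi_{u}$ coincides with left multiplication $L_{u} \colon v \mapsto uv$: from the defining identities $f\alpha_{i} = f\rho_{a_{i}}$ and $(f_{1},\ldots,f_{n})\lambda = \bigcup_{i} f_{i}\rho_{a_{i}}^{-1}$, associativity of composition (together with the fact that left composition distributes over unions of partial functions) yields $L_{u}(v\alpha_{i}) = L_{u}(v)\alpha_{i}$ and $L_{u}((v_{1},\ldots,v_{n})\lambda) = (L_{u}(v_{1}),\ldots,L_{u}(v_{n}))\lambda$; since $L_{u}(1) = u$, freeness forces $L_{u} = \phi_{u}$. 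Consequently $u \mapsto L_{u}$ is a monoid isomorphism $\mathscr{T}_{n} \cong \mathsf{End}(\mathscr{T}_{n})$, which restricts to a group isomorphism $\mathsf{Aut}(\mathscr{T}_{n}) \cong \mathsf{U}(\mathscr{T}_{n})$.

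It then remains to identify $\mathsf{U}(\mathscr{T}_{n})$ with $G_{n,1}$. By Proposition~\ref{prop:coffee} each element of $\mathscr{T}_{n}$ is faithfully represented by its induced continuous self-map of $A_{n}^{\omega}$. A symbol $\left(\frac{X}{Y}\right)$ with $X$ a maximal prefix code represents a unit of $\mathscr{T}_{n}$ if and only if its induced map is a homeomorphism of $A_{n}^{\omega}$, and one checks directly that this happens precisely when $Y$ is also a maximal prefix code and the indexing $x_{i} \leftrightarrow y_{i}$ is injective --- equivalently, precisely when the symbol represents an element of $D_{n}^{e}$. Therefore $\mathsf{U}(\mathscr{T}_{n})$ is the image of $D_{n}^{e}$ under the canonical map $D_{n} \hookrightarrow M_{n} \twoheadrightarrow M_{n}/\equiv$, and this image coincides with $\mathsf{U}(C_{n})$: indeed, $[f] \in D_{n}/\equiv$ is a unit iff $\mathbf{d}(f) \equiv 1 \equiv \mathbf{r}(f)$, which by Lemma~\ref{lem:essential-idempotents} is equivalent to $f$ lying in $D_{n}^{e}$. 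By Theorem~\ref{them:one}, $\mathsf{U}(C_{n}) = G_{n,1}$, completing the identification.

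The step I expect to require the most care is the biconditional in the second paragraph: that a two-sided inverse in the restriction monoid $\mathscr{T}_{n}$ forces a bijective (not merely surjective) representing symbol. This reduces to the faithful representation on $A_{n}^{\omega}$: if $[f][g] \equiv 1 \equiv [g][f]$, then the induced continuous maps are mutually inverse homeomorphisms of the compact Hausdorff Cantor space, and a locally-affine homeomorphism determined by a maximal prefix code decomposition must arise from a bijective morphism between essential finitely generated right ideals.
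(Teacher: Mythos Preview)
Your argument is correct and reaches the same intermediate conclusion as the paper --- automorphisms of $\mathscr{T}_{n}$ are exactly left multiplications by units --- but the route is genuinely different. The paper does not invoke Theorem~\ref{them:free} at all: given an automorphism $\theta$ it computes explicitly, using the decomposition $f = \bigvee_{i} y_{i}x_{i}^{-1}$ and the fact that $\theta$ preserves $\lambda$, that $\theta(f) = \bigvee_{i}\theta(y_{i})x_{i}^{-1} = \theta(1)\cdot f$, and then argues separately that $\theta(1)$ is a unit from bijectivity of $\theta$. Your approach is cleaner: once you observe that every $L_{u}$ is an algebra endomorphism (because the operations are right multiplications and left multiplication distributes over the union defining $\lambda$), freeness on one generator immediately gives the monoid isomorphism $\mathscr{T}_{n}\cong\mathsf{End}(\mathscr{T}_{n})$, and taking units on both sides finishes this half with no computation. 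What you lose is self-containment: your proof depends on Theorem~\ref{them:free}, whereas the paper's proof of Theorem~\ref{them:higman-two} stands on its own.

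For the identification $\mathsf{U}(\mathscr{T}_{n}) = G_{n,1}$, the paper simply asserts it in one line, while you supply the details via Proposition~\ref{prop:coffee} and the passage through $D_{n}^{e}$. Your flagged step is not difficult: if $\left(\frac{X}{Y}\right)$ with $X$ a maximal prefix code induces a bijection of $A_{n}^{\omega}$, then injectivity forces the $y_{i}$ to be distinct and pairwise prefix-incomparable (any coincidence or prefix relation among the $y_{i}$ produces two preimages of a single infinite string), and surjectivity together with Proposition~\ref{prop:mpc} forces $Y$ to be maximal; hence the symbol already lies in $D_{n}^{e}$. One small point worth making explicit is that the Lenz congruence on $M_{n}$ restricts to the Lenz congruence on $D_{n}$ (because the order ideal below an element of $D_{n}$ in $M_{n}$ coincides with its order ideal in $D_{n}$), so that $C_{n}$ genuinely embeds in $\mathscr{C}_{n}$ and the comparison with $\mathsf{U}(C_{n})$ is legitimate.
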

\begin{proof} The group of units of $\mathscr{T}_{n}$ is the Thompson group $G_{n_1}$.
Left multiplication by an element of the group of units is an automorphism of the  $n$-ary Cantor algebra $\mathscr{T}_{n}$.
We now prove the converse.
Let $\theta$ be an automorphism of  the  $n$-ary Cantor algebra $\mathscr{T}_{n}$.
We shall prove that $\theta$ is simply left multiplication by an element of the group of units of $\mathscr{T}_{n}$,
the element in question being $\theta ( 1 )$.
We have that $1 \equiv a_{1}a_{1}^{-1} \vee \ldots \vee a_{n}a_{n}^{-1}$.
Thus
$1 \equiv  (a_{1}, \ldots, a_{n})\lambda$.
We now apply $\theta$ to both sides and use the fact that $\theta$ is an automorphism to get that
$$\theta (1) = \theta (a_{1})a_{1}^{-1} \vee \ldots \vee \theta (a_{n})a_{n}^{-1}.$$
Now let $f = \bigvee_{i=1}^{p} y_{i}x_{i}^{-1}$ be any element of $\mathscr{T}_{n}$.
We prove that
$\theta (f) =   \bigvee_{i=1}^{p} \theta (y_{i}) x_{i}^{-1}$.
The first step is to use the relation
$a_{1}a_{1}^{-1} \vee \ldots \vee a_{n}a_{n}^{-1} \equiv \varepsilon \varepsilon^{-1}$ 
to ensure that all the strings $x_{1}, \ldots, x_{p}$ have the same length.
We can write $f$ as
$f = f_{1}a_{1}^{-1} \vee \ldots \vee f_{n}a_{n}^{-1}$
since it is defined using a maximal prefix code.
We now use the fact that $\theta$ is an automorphism to deduce that
$\theta (f) = \theta (f_{1})a_{1}^{-1} \vee \ldots \vee \theta (f_{n})a_{n}^{-1}$.
We now repeat this process and the result now follows.
Let $y$ be any non-empty string.
Define $y = y'\bar{y}$ where $y' \in A_{n}$ is a letter.
Then, since $\theta$ is an automorphism we have that $\theta (y) = \theta (y')\theta (\bar{y})$.
It follows that $\theta$ is determined once its value on the $n$ letters is known.
We now prove that
$$\theta \left( \bigvee_{j=1}^{s} y_{j}x_{j}^{-1}   \right) = \left( \bigvee_{i=1}^{n} \theta (a_{i})a_{i}^{-1}  \right) \left(   \bigvee_{j=1}^{s} y_{j}x_{j}^{-1}  \right).$$
We have that 
$$\theta \left( \bigvee_{j=1}^{s} y_{j}x_{j}^{-1}   \right)  =  \bigvee_{j=1}^{s} \theta (y_{i}') \bar{y_{i}} x_{j}^{-1}.$$ 
We now calculate the product
$$\left( \bigvee_{i=1}^{n} \theta (a_{i})a_{i}^{-1}  \right) \left(   \bigvee_{j=1}^{s} y_{j}x_{j}^{-1}  \right).$$
We focus on the product $\theta (a_{i})a_{i}^{-1}y_{j}x_{j}^{-1}$.
If $y_{j}$ begins with the letter $a_{i}$ then the product is $\theta (y_{j}')\bar{y}_{j}x_{j}^{-1}$ otherwise it is zero.
We have therefore proved that $\theta$ is represented by left multiplication by the element
$$\theta (1) = \theta (a_{1})a_{1}^{-1} \vee \ldots \vee \theta (a_{n})a_{n}^{-1}.$$
It remains to prove that this element is invertible.
Put $\theta (1) = g$.
Then, by the above, $\theta (f) = gf$.
We prove that $g$ is an element of the group of units of $\mathscr{T}_{n}$.
The function $\theta$ is surjective and so there exists an element $h \in \mathscr{T}_{n}$ such that $\theta (h) = 1$.
Thus $gh = 1$.
Now $ghg = g$ and so $\theta (hg) = g$.
But $\theta (1) = g$.
It follows that $hg = 1$ since $\theta$ is injective.
We have therefore proved that $g$ is invertible in $\mathscr{T}_{n}$.
It is therefore an element of the Thompson group $G_{n,1}$.
\end{proof}

We conclude this paper by determining normal forms for the elements of $\mathscr{T}_{n}$ and thereby make the connection with the work of Higman \cite{Higman}.
An element of $\mathscr{T}_{n}$ is determined by a symbol $\frac{X}{Y}$ where $X$ is a maximal prefix code and $Y$ is any subset of $A_{n}^{\ast}$.
In general, of course, there will be infinitely many symbols that determine the same element of  $\mathscr{T}_{n}$.
We want to pick a single element form the $\equiv$-class of $\frac{X}{Y}$.
To do this, we use Theorem~\ref{them:carets-refine}.
Given a symbol $\left( \frac{X}{Y} \right)$, we say it can be {\em simplified}
if there are $ya_{1}, \ldots, ya_{n} \in Y$ and $xa_{1} \ldots, xa_{n} \in X$ such that 
$ya_{i}$ is matched with $xa_{i}$ for $1 \leq i \leq n$.
Remove the occurrences  $ya_{1}, \ldots, ya_{n}$ from $Y$ to obtain $Y'$ 
and the corresponding occurrences $xa_{1} \ldots, xa_{n}$ from $X$ to obtain $X'$.
We therefore obtain the symbol $\left( \frac{X' \cup \{x\}}{Y' \cup \{ y\}} \right)$
which is $\equiv$-related to $\left( \frac{X}{Y} \right)$.
By a {\em standard symbol} we mean one $\left( \frac{X}{Y} \right)$ where $X$ is a maximal prefix code
which cannot be simplified any further.
We claim that two standard symbols are $\equiv$-related if and only if they are equal;
this follows by Theorem~\ref{them:carets-refine} and \cite[Proposition 1.4]{Birgetarxiv}
where we observe that we must use the arXiv version of Birget's paper to find the proof.
We claim that the standard symbols defined above are in bijective correspondence
with the standard forms over $\{x\}$ defined in \cite{Higman}.
The only difference is that to obtain standard forms over $\{x\}$
we prefix each element of $yA_{n}^{\ast}$ by the letter $x$ which we assume not to belongs to $A_{n}^{\ast}$.

We now describe the form taken by the operations in the $n$-ary Cantor algebra when applied to the standard symbols.
The standard symbol $\left( \frac{\varepsilon}{y} \right)$ determines the globally defined mapped of $A_{n}^{\omega}$ given by $w \mapsto yw$.
We have that $\left( \frac{\varepsilon}{y} \right)\alpha_{i} = \left( \frac{\varepsilon}{ya_{i}} \right)$.
Let $f_{1}, \ldots, f_{n}$ be standard symbols.
We write $f_{i} = \left( \frac{X_{i}}{Y_{i}} \right)$.
Thus $(f_{1}, \ldots, f_{n})\lambda$ has the symbol
$\left(\frac{a_{1}X_{1} \cup \ldots \cup a_{n}X_{n}} {Y_{1} \cup \ldots \cup Y_{n}}\right)$.
If we now apply the unary operation $\alpha_{i}$ to this symbol we get
$\left(\frac{a_{i}^{-1}a_{1}X_{1} \cup \ldots \cup a_{i}^{-1}a_{n}X_{n}} {Y_{1} \cup \ldots \cup Y_{n}}\right)$.
This is just $f_{i} = \left( \frac{X_{i}}{Y_{i}} \right)$.
We now consider the operation $\lambda$.
Let $f_{1}, \ldots, f_{n}$ be standard symbols.
We write $f_{i} = \left( \frac{X_{i}}{Y_{i}} \right)$.
Thus $(f_{1}, \ldots, f_{n})\lambda$ has the symbol
$\left(\frac{a_{1}X_{1} \cup \ldots \cup a_{n}X_{n}} {Y_{1} \cup \ldots \cup Y_{n}}\right)$.
This might not be a standard symbol.
Suppose that $f_{i} = f\alpha_{i}$ where $f = \left( \frac{X}{Y} \right)$.
Thus $(f_{1}, \ldots, f_{n})\lambda$ has the symbol
$\left(\frac{a_{1}a_{1}^{-1}X  \cup \ldots \cup a_{n}a_{n}^{-1}X} {Y \cup \ldots \cup Y}\right)$.
But this is $\equiv$-equivalent to the standard symbol $f = \left( \frac{X}{Y} \right)$.
We see that the $\equiv$-relation is hidden in \cite{Higman} but we have made it explicit here.
We have proved the following theorem.

\begin{theorem}\label{them:higman-forms}
Let $A_{n}$ be a finite alphabet with $n \geq 2$.
Then there is an isomorphism between the $n$-ary Cantor algebra
$\left(\mathscr{T}_{n}, \alpha_{1}, \ldots, \alpha_{n}, \lambda \right)$
and Higman's $n$-ary Cantor algebra of standard forms over the set $\{x\}$.
In particular, the standard symbols of $\mathscr{T}_{n}$ are in bijective correspondence with the standard forms over the set $\{x\}$.
\end{theorem}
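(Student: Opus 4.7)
The plan is to package the observations immediately preceding the statement into a clean proof by first verifying that standard symbols furnish canonical representatives for $\equiv$-classes in $\mathscr{T}_{n}$, and then comparing the resulting algebra of canonical representatives, operation by operation, with Higman's algebra of standard forms over $\{x\}$.

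First, I would establish the following key claim: every $\equiv$-class in $\mathscr{T}_{n}$ contains a unique standard symbol. Existence is immediate, because the simplification procedure defined in the excerpt strictly decreases the cardinality of the domain maximal prefix code, so after finitely many simplifications we reach a standard symbol $\equiv$-related to the original. For uniqueness, I would invoke Theorem~\ref{them:carets-refine}: if two standard symbols are $\equiv$-related, then one is obtained from the other by a finite sequence of caret insertions and deletions. Because each symbol is already standard, no deletion of a caret is possible (by definition), so neither sequence of modifications can contain a net reduction, which forces equality. This is where I would cite \cite[Proposition~1.4]{Birgetarxiv} to close the argument rigorously; this is the one step that is not wholly internal to the present paper and is the step most likely to require care.

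Second, I would define the bijection $\Phi$ from the set of standard symbols of $\mathscr{T}_{n}$ onto the set of Higman standard forms over $\{x\}$ by the rule that $\left( \frac{X}{Y}\right)$ is sent to the symbol over $\{x\}$ in which each leaf label $y \in Y$ is replaced by the formal string $xy$, where we think of $x$ as a new letter not in $A_{n}$. The inverse of $\Phi$ strips the leading $x$. Because standardness is defined in both frameworks by the same combinatorial condition (no full $n$-fold group of matched leaves can be collapsed), $\Phi$ is a well-defined bijection between standard representatives.

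Finally, I would verify that $\Phi$ intertwines the Cantor-algebra operations. For the unary operations $\alpha_{i}$, the explicit formulas given just before the statement --- namely $\left( \frac{\varepsilon}{y}\right)\alpha_{i} = \left( \frac{\varepsilon}{ya_{i}}\right)$ for the degenerate case and $\left( \frac{X}{Y}\right)\alpha_{i} = \left( \frac{a_{i}^{-1}X}{Y}\right)$ in general --- agree on the nose with the corresponding operations on Higman's standard forms. For $\lambda$, one forms the symbol $\left( \frac{a_{1}X_{1} \cup \cdots \cup a_{n}X_{n}}{Y_{1} \cup \cdots \cup Y_{n}}\right)$, then applies the simplification procedure once to obtain the standard representative of the result; the computation sketched in the excerpt (in particular the observation that if all $f_{i} = f\alpha_{i}$ share a common ancestor then the result simplifies to $f$) shows that this matches exactly Higman's description of $\lambda$ on standard forms modulo his hidden equivalence. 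Combined with the generation statement for $\mathscr{T}_{n}$ from Theorem~\ref{them:free} and the uniqueness of standard representatives established above, this proves that $\Phi$ is an isomorphism of $n$-ary Cantor algebras, yielding both assertions of the theorem.
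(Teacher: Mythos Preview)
Your proposal is correct and follows essentially the same route as the paper: the paper's ``proof'' is precisely the discussion preceding the theorem, which establishes unique standard representatives via Theorem~\ref{them:carets-refine} together with \cite[Proposition~1.4]{Birgetarxiv}, sets up the bijection by prefixing leaf labels with the fresh letter $x$, and checks the operations match. One small caution: your informal uniqueness sketch (``no deletion of a caret is possible \ldots\ so neither sequence of modifications can contain a net reduction'') does not actually work as stated, since a sequence of caret moves may first insert and then delete carets in a different pattern, potentially landing on a distinct standard symbol; this is exactly why the confluence argument from \cite{Birgetarxiv} is genuinely needed here, as you correctly anticipate.
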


\nocite{*}


\end{document}